\renewenvironment{abstract}
{\list{}{\rightmargin=2.5cm\leftmargin=\rightmargin}%
\item[]{\sc Abstract.}\small
\relax}{\endlist}
\theoremstyle{custom} 
\newtheorem{thm}{Theorem}
\newtheorem{lem}{Lemma}
\newtheorem{prop}{Proposition}
\newtheorem{cor}{Corollary}
\newtheorem*{ac*}{Acknowledgement}
\newtheorem*{rmk*}{Remark}
\begin{document}
\title{\bf Nonvanishing of
geodesic periods over compact hyperbolic manifolds}
\author{\upshape Feng\,\, Su}\date{}
\maketitle
\thispagestyle{empty}
\allowdisplaybreaks
\vspace*{-0.8cm}
\begin{abstract}
Let $X$ be a compact hyperbolic manifold with dimension $d\geqslant3$. In this paper we show that there are infinitely many nonvanishing geodesic periods defined over any compact $n$-dimensional ($n\geqslant2$)
 geodesic cycle of $X$.
\end{abstract}

\section{Introduction}\label{intro}
Let $X$ be a  compact hyperbolic manifold with dimension $d\geqslant 3$ and volume form $dx$, $\{\phi_j\}$ be an orthonormal basis of $L^2(X,\,dx)$ such that each $\phi_j$ is a Laplace eigenfunction: $\Delta\,\phi_j=\lambda_j\,\phi_j$ where $\lambda_j\in\mathbb{R}_{\geqslant0}$
and $\Delta$ stands for the Laplace operator of $X$ determined by its hyperbolic metric. In the theory of automorphic forms, $\phi_j$'s are called (normalized) ``Maass forms'' (after H. Maass).  
The order of $\phi_j$'s are arranged such that $\lambda_j$'s are nondecreasing as $j$ grows. Let $Y$ be a compact geodesic cycle of $X$ with dimension $n\geqslant 2$ and hyperbolic measure $dy$ (see Sect.\,\ref{hy}), 
$\psi$ be a normalized Maass form on $Y$ with Laplace eigenvalue $\lambda$. Define the {\it geodesic period} ({\it period} for short) as 
$$
P_Y(\phi_j,\psi):=\int_Y\phi_j(y)\overline{\psi}(y)dy.$$
Such period fits into the general notion of automorphic period  which has been playing a
central role in the study of
automorphic forms thanks to its close relations with 
automorphic representations and special $L$-values (see \cite{ii}, \cite{wa}, \cite{zh}, etc.).
With notations and restrictions as above, the main conclusion of this paper is 
\begin{thm}\label{thm}
For any fixed $\psi$, there are infinitely many $j$ such that $P_Y(\phi_j,\psi)\ne0$.
\end{thm}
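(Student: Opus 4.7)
My plan is to argue by contradiction, using the spectral expansion of a distribution supported on $Y$. Consider the linear functional $T$ on $C^\infty(X)$ given by
$$\langle T,f\rangle:=\int_Y f(y)\,\overline{\psi}(y)\,dy.$$
Because $\overline\psi$ is smooth and $Y$ is a compact submanifold of $X$, $T$ is a (signed) measure supported on $Y$, hence a distribution of finite negative Sobolev order, say $T\in H^{-s}(X)$ for some $s>0$. Its spectral coefficients with respect to the orthonormal basis $\{\phi_j\}$ are exactly the periods of interest:
$$\widehat{T}(j):=\langle T,\phi_j\rangle=P_Y(\phi_j,\psi).$$

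Suppose, contrary to the theorem, that $P_Y(\phi_j,\psi)=0$ for all $j$ outside some finite set $S$. Then the spectral expansion of $T$ reduces to the finite sum
$$T=\sum_{j\in S}P_Y(\phi_j,\psi)\,\phi_j,$$
understood as an equality of distributions. The right-hand side is a finite linear combination of smooth Maass forms on $X$, so $T$ would be represented by a smooth function. This step uses the standard fact that a distribution on a compact Riemannian manifold whose eigenfunction expansion has only finitely many nonzero terms is actually smooth; it follows from the characterization of $H^s(X)$ through the spectral weights $(1+\lambda_j)^{s/2}$ together with $\bigcap_{s}H^s(X)=C^\infty(X)$.

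On the other hand, $Y$ is a proper submanifold of $X$, since $n<d$ for a genuine geodesic cycle, so $Y$ has measure zero in $X$. A smooth function supported on a set of measure zero must vanish identically, which would give $T=0$. But $T$ is nonzero: taking any smooth extension $\widetilde\psi\in C^\infty(X)$ of $\psi$ and pairing with $T$ gives
$$\langle T,\widetilde\psi\rangle=\int_Y|\psi(y)|^2\,dy=1,$$
since $\psi$ is normalized. This contradiction proves the theorem.

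The mechanism of the proof is remarkably soft: nothing about hyperbolicity, totally geodesic embeddings, or the fine spectral theory of Maass forms is really used, beyond the bare fact that $\{\phi_j\}$ diagonalizes $\Delta$ on a compact manifold. The only technical point is the passage from a finite spectral expansion to smoothness of the underlying distribution, which is a routine consequence of Sobolev theory on compact manifolds; I do not expect a genuine obstacle here. If the conclusion had instead been a quantitative lower bound on the number of nonvanishing periods in a spectral window, a more delicate argument (for instance via a relative trace formula or test-function amplification) would be required, but for qualitative infinitude the distributional argument suffices.
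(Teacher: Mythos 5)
Your proof is correct, and it is a genuinely different and much softer route than the paper's. You encode the periods as the spectral coefficients of the distribution $T\in\mathcal{D}'(X)$ given by $\langle T,f\rangle=\int_Y f\,\overline\psi\,dy$, note that $\operatorname{supp}T\subset Y$ and that $Y$ has empty interior in $X$ (you do need $n<d$ here, which is implicit in the paper's setup and is genuinely required, since the statement fails for $n=d$), and observe that a finite spectral expansion would force $T$ to be a smooth function supported in $Y$, hence identically zero, contradicting $\langle T,\widetilde\psi\rangle=\int_Y|\psi|^2\,dy=1$. The only inputs are elliptic regularity, the $C^\infty$-completeness of the eigenbasis on a compact manifold, and the existence of a smooth extension of $\psi$; neither hyperbolicity, nor the total geodesy of $Y$, nor the fact that $\psi$ is itself a Laplace eigenfunction plays any role, so you in fact prove a strictly stronger statement. (A minor convention point: the smooth function representing $T$ is $\sum_{j\in S}P_Y(\phi_j,\psi)\,\overline{\phi_j}$ rather than $\sum_{j\in S}P_Y(\phi_j,\psi)\,\phi_j$, but this changes nothing.) The paper, by contrast, runs a relative trace formula with the test function $\Phi_\mu(x)=e^{-\mu\cosh x}$, computes its Harish-Chandra--Selberg transform in closed form via $K$-Bessel functions, identifies the main geometric term through uniqueness of $(K_0\times K_0)$-invariant functionals, and controls the error term by a lattice-point count (Propositions~\ref{x2}, \ref{u111}, \ref{x1}); the reward is the quantitative asymptotic~(\ref{main}), of which Theorem~\ref{thm} is merely a corollary. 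If the sole goal is Theorem~\ref{thm}, your argument is correct and far more economical; if one also wants the spectral sum formula or effective control on the periods in a window, the trace-formula machinery is not dispensable.
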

In literature the  nonvanishing of (infinitely many) geodesic periods can follow from
the asymptotics of these
periods. See \cite{hej},
\cite{good}, \cite{ze},
\cite{p}, \cite{ts} and \cite{kmw}. In particular,  
 \cite{ts} dealt with a class of arithmetic hyperbolic manifolds and the periods defined over the codimension-1 geodesic cycles; the case where $X$ is an arbitrary
compact Riemann surface 
was treated in \cite{kmw} which has inspired us to work on higher dimensional situation.

In the language of automorphic representations,
Theorem \ref{thm} says that,
under certain restrictions on $\Gamma$,
there are infinitely many 
$G_0$-distinguished spherical automorphic representations where $G_0\subset G=SO(1,d)$
is a closed subgroup which descends to $Y$ (see Sect.\,\ref{hy}). 

The content of this paper is organized as follows. In the next section we shall make some preparations on the necessary knowledge on hyperbolic space, representation theory and trace formula. These stuff will be used later. In Sect.\,\ref{spec} we shall insert a special test function
into the trace formula and  discuss the spectral side of the trace formula. 
The analysis of the geometric side will be given in Sect.\,\ref{geo} where we split this side into two parts: the main and error terms; in particular, the contributions from these two terms will be estimated.
Theorem \ref{thm} then follows from the comparison of the spectral and geometric sides.
The necessary conditions that the test function should fulfill (so that we can apply the trace formula) will be checked in Sect.\,\ref{fkf}.

\section{Preliminaries}\label{pre}
\subsection{Hyperbolic spaces}\label{hy}
By the uniformization theorem, any $d$-dimensional orientable hyperbolic manifold $X$ with finite volume can be realized as a locally symmetric space: $X\cong\Gamma\backslash G/K$ where $G$ is the Lorentz group
$SO(1,d)$, $\Gamma\cong\pi_1(X)$ is a torsion-free lattice of $G$ and $K\cong SO(d)$ is a maximal compact subgroup of $G$. 
Without loss of generality
we assume that  $K=\{{\rm diag}(1,k)\,|\,k\in SO(d)\}$.
The quotient space $G/K$ is isomorphic to the hyperboloid model 
$$\mathbb{H}^d=\Big\{x=(x_0,\dots,x_d)\in\mathbb{R}^{d+1}\,\Big|\,x_0^2-\sum_{i\ne0}x_i^2=1,\,x_0>0\Big\}.$$
Let $Y$ be a compact geodesic cycle of $X$ with dimension $n$ and hyperbolic measure $dy$. Then, up to finite cover,
$Y$ is isomorphic to $\Gamma_0
\backslash G_0/K_0$
where
$$G_0=\left\{\textup{diag}(\tau_1,\,\tau_2)\in G\,|\,\tau_1\in O(1,\,n),\,\,\tau_2\in O(d-n)\right\},$$
$$K_0=\left\{\textup{diag}(\rho_1,\,\rho_2)\in K\,|\,\rho_1\in
O(n),\,\,\rho_2\in O(d-n)\right\}$$ is the maximal compact subgroup
of $G_0$, and $\Gamma_0\cong\pi_1(Y)$ is a torsion-free
uniform lattice of $G_0$.

An automorphic representation $\pi$ of $G$ is called 
$G_0$-distinguished if there exists $\phi\in\pi$
such that $\int_{\Gamma_0\backslash
G_0}\phi(x)dx\ne 0$. 
The notion
of ``distinguished representation" is used in the setting of groups over adeles. Nevertheless, we adopt this notion since this paper applies to (uniform) real arithmetic lattices. 
In this 
prospect, Theorem \ref{thm} can be 
rephrased as follows: there are infinitely many (spherical) automorphic representations which are 
$G_0$-distinguished, provided that $\Gamma$ is uniform.

\subsection{Representation theory}\label{sym}
Let $\Theta$ be the Cartan involution on $G$: $\Theta(g)=\left(g^{T}\right)^{-1}$ (transpose inverse). The Cartan involution $\theta$ on Lie algebra level (i.e., $\theta(X)=-X^T$) gives rise to the vector space decomposition of the Lie algebra $\mathfrak{g}$ of $G$: $\mathfrak{g}=
\mathfrak{p}\oplus\mathfrak{k}$ where 
$\mathfrak{k}=
\{X\in\mathfrak{g}\,|\,\theta(X)=X\}$ is a  sub-Lie algebra of $\mathfrak{g}$
and $\mathfrak{p}=\{X\in\mathfrak{g}\,|\,\theta(X)=-X\}$.  Let $\mathfrak{a}$ be a maximal abelian subspace of $\mathfrak{p}$. For each linear functional $\alpha$ on $\mathfrak{a}$, define $\mathfrak{g}_{\alpha}=\left\{X\in\mathfrak{g}\,|\,[H,X]=\alpha(H)X~\textup{for all}~H\in\mathfrak{a}\right\}$.
The set of those nonzero $\alpha$ such that $\mathfrak{g}_{\alpha}\ne0$ is a root system, denoted as $(\mathfrak{g},\mathfrak{a})$. Let $E_{ij}=(e_{ij})$ be a $(d+1)\times(d+1)$ matrix whose entries satisfy: $e_{lk}=1$ for $(l,k)=(i,j)$, $e_{ik}=0$ otherwise. We choose $\mathfrak{a}=
\mathbb{R}\,E$
where $E=E_{12}+E_{21}$. Then the root system $(\mathfrak{g},\mathfrak{a})$ consists of two elements $\pm\alpha_0$ where $\alpha_0$ (the positive root) is defined by ${\rm ad}(E)$. Let $E_i=E_{1,2+i}+E_{2,2+i}+E_{2+i,1}-E_{2+i,2}$ ($1\leqslant i\leqslant d-1$), then ${\rm ad}(E)E_i=E_i$ for each $i$.
Hence,  $\mathfrak{g}_{\alpha_0}=\mathfrak{n}:=\mathbb{R}E_1\oplus\cdots\oplus\mathbb{R}E_{d-1}$. Define the groups $A$, $N$, $A^+$ to be $A=\exp(\mathfrak{a})$, $N=\exp(\mathfrak{n})$,
$A^+=\exp\left(
\mathbb{R}_+\,E\right)\subset A$ where $\mathbb{R}_+$ means the set of positive real numbers. We have the Iwasawa decomposition $G=NAK$ and $KAK$-decomposition $G=KA^+K$ both of which are unique (note that we have restricted the middle component $A(g)$ of $g$ in the $KAK$-decomposition to lie in $A^+$). 

Denote  
$$a_x^+=\exp(xE),\qquad n_{u}=\exp\left(\sum\limits_{i=1}^{d-1}u_iE_i\right)$$
for $x\in\mathbb{R}$,
$u=(u_1,\cdots,u_{d-1})\in\mathbb{R}^{d-1}$. It is easy to verify that
$$a_x^+=\begin{pmatrix}\cosh x&\sinh x&0&0&\cdots&0\\ \sinh x&\cosh x&0&0&\cdots&0\\0&0&1&0&\cdots&0\\
\vdots&\vdots&\vdots&\vdots&\vdots&\vdots\\
0&0&0&0&\cdots&1\end{pmatrix}$$ and
$$n_u=
\begin{pmatrix}
1+\frac{|u|^2}{2}&-\frac{|u|^2}{2}&u_1&u_2&\cdots&u_{d-1}\\
\frac{|u|^2}{2}&1-\frac{|u|^2}{2}&u_1&u_2&\cdots&u_{d-1}\\u_1&-u_1&1&0&\cdots&0\\u_2&-u_2&0&1&\cdots&0\\\vdots&\vdots&\vdots&\vdots&\vdots&\vdots\\
u_{d-1}&-u_{d-1}&0&0&\cdots&1\end{pmatrix}$$ where
$|u|^2=\sum_{i=1}^{d-1}u_i^2$.

The Killing form $B(X,Y)={\rm Tr}\big({\rm ad}(X){\rm ad}(Y)\big)$ on $\mathfrak{g}$, when restricted to $\mathfrak{p}$, induces a $G$-invariant Riemannian metric on $G/K$ with which we have the distance between the two points $g{\cdot}o$, $e{\cdot}o$ on $G/K$:
$${\rm d}_{G/K}(g{\cdot}o,e{\cdot}o)=
B\big(\log\,A(g),\log\,A(g)\big)^{1/2}=:\|g\|.$$
This invariant Riemannian metric on $G/K$ induces the metric and measure (denoted as $\mu^{\prime}$) on $\Gamma\backslash G/K$.
Let $dk$ be a Haar measure of $K$. Throughout the paper we always assume that ${\rm vol}(K)=\int_Kdk=1$. Any Haar measure of $G$ projects to a Radon measure $\mu$ of $\Gamma\backslash G$ and the latter, up to a positive scalar, projects to the measure $\mu^{\prime}$ on $\Gamma\backslash G/K$ such that the quotient integral formula holds: $\int_{\Gamma\backslash G}f(x)\mu(x)=\int_{\Gamma\backslash G/K}f(xk)\mu^{\prime}(x)dk$ for any $f\in C_c(\Gamma\backslash G)$.  

The group $G$ acts on  $L^2(\Gamma\backslash G,\mu)$ via the right regular translation $R_1$: $R_1(f)(x)=f(xg)$ for $f\in L^2(\Gamma\backslash G,\mu)$, $x\in\Gamma\backslash G$. The
Casimir operator $\square$ acts on the dense subset of smooth functions of $L^2(\Gamma\backslash G,\,\mu)$ as a
symmetric operator, and it has a unique self-adjoint extension to $L^2(\Gamma\backslash G,\,\mu)$; the similar conclusion holds for $\Delta$ and $L^2(\Gamma\backslash
G/K,\,\mu^{\prime})$ (see \cite{ch}). Any element in $L^2(\Gamma\backslash G/K,\mu^{\prime})$ can be viewed as an element in $L^2(\Gamma\backslash G,\mu)$ that is $K$-invariant under the action $R_1$. When restricted to $L^2(\Gamma\backslash G)^K$, the two operators $\square$ and $\Delta$ are identical with each other. 

Assume that $\Gamma$ is uniform. Then $R_1$ is decomposed into irreducible representations (see Theorem 9.2.2 of \cite{de}):
\begin{equation}\label{dec}
R_1\cong\bigoplus_{\pi\in\widehat{G}}N_{\Gamma}(\pi)\,\pi
\end{equation}
where $\widehat{G}$ denotes the unitary dual of $G$, i.e., the set
of equivalent classes of unitary irreducible representations of $G$,
$N_{\Gamma}(\pi)<\infty$ denotes the multiplicity of $\pi$.
Hence
\begin{eqnarray}\label{Kfix}
L^2(\Gamma\backslash G/K)\cong\bigoplus_{\pi
\in\widehat{G}^{\,K}}N_{\Gamma}(\pi)\,V_{\pi}^K
\end{eqnarray}
where $\widehat{G}^{\,K}$ means the subset of $\widehat{G}$ whose
element $\pi$ satisfies the condition $V_{\pi}^{K}\ne \{0\}$. Such
$\pi$'s are called \textit{spherical} or {\it class one} representations. Here
we use $V_{\pi}$ to denote the representation space of $\pi$.
Let $M\subset K$ denote the centralizer of $A$ in $K$. Then $M=\{{\rm diag}(1,1,k)\,|\,k\in SO(d-1)\}$. As $\mathfrak{a}$ is of dimension one, we can identify 
$\mathfrak{a}^{\ast}_{\mathbb{C}}$
with $\mathbb{C}$ via the map $\iota:\,\mathfrak{a}^{\ast}_{\mathbb{C}}
\rightarrow\mathbb{C}$,
$\alpha\mapsto\frac{d-1}{2}\alpha(E)$.
 Let $\rho$ be the half sum of positive
roots of the root system $(\mathfrak{g},\mathfrak{a})$, then $\iota(\rho)=\frac{d-1}{2}$. 
From now on we shall not distinguish $\mathfrak{a}^{\ast}_{\mathbb{C}}$
and $\mathbb{C}$. 
It is known that any nontrivial irreducible spherical representation of $G$ is equivalent to $I(\nu)={\rm Ind}_{MAN}^G(\bm{1}\otimes e^{\nu}\otimes\bm{1})$ for some $\nu\in(-\rho,\rho)\cup\,i\,\mathbb{R}$, and $I(\nu)\cong I(-\nu)$ for such $\nu$.  The trivial representation is isomorphic to the Langlands quotient of $I(\rho)$ modulo its unique subrepresentation.  Let 
$\nu\in\mathfrak{a}^{\ast}_{\mathbb{C}}$
and $(\sigma,V_{\sigma})$ be a representation of $M$.
Recall that 
$${\rm Ind}_{MAN}^G(\sigma\otimes e^{\nu}\otimes\bm{1})=\left\{h:\,G\rightarrow V_{\sigma}\,\Big|\,{h(mang)=e^{(\nu+\rho)\log a} \sigma(m)h(g)\,\,\textup{for}\atop \,\,man\in MAN,\,\,g\in G;\,\,h|_K\in L^2(K,\,V_{\sigma})}\right\}$$
endowed with the action $R_2$ of $G$: 
$$
R_2(g)h(x)=h\left(xg\right).$$
Let $\{\phi_j\}$ be 
an orthonormal basis of 
$L^2(\Gamma\backslash G/K,\mu^{\prime})$ 
such that each $\phi_j$ is a Maass form with Laplace eigenvalue  
$\lambda_j=\rho^2-\nu_j^2$. Denote by $\tilde{\phi}_i\in L^2(\Gamma\backslash G)$
the natural lift of $\phi_j$ such that $\tilde{\phi}_i(xk)=\phi_j(x{\cdot}o)$ for any $x\in\Gamma\backslash G$, $k\in K$. Under the action $R_1$ of $G$, $\tilde{\phi}_i$ generates an irreducible unitary subrepresentation $V_{\lambda_j}
\subset L^2(\Gamma\backslash G)$ of $G$. We have $V_{\lambda_j}\cong I(\nu_j)$. 

Let $G_0=M_0A_0N_0K_0$ be the  Langlands decomposition of the group $G_0$ where
$M_0=M\cap G_0$, $A_0=A\cap G_0=A$,
$N_0=N\cap G_0$.
Let $\mathfrak{n}_0$ be the Lie algebra of $N_0$.
The half sum of positive roots of the system $(\mathfrak{g}_0,\mathfrak{a}_0)$ is  $\rho_0=\frac{n-1}{2}$. 
Identifying $\mathfrak{p}_0=\mathfrak{a}\oplus
\mathfrak{n}_0$ with the tangent space of $G_0/K_0$ at $e{\cdot}o$, any geodesic on $G_0/K_0$
can be translated by certain $g\in G$ (via the left multiplication) to be a new geodesic which passes 
through $e{\cdot}o$ and 
is written as $\left\{\exp(tX)\,|\,t\in\mathbb{R}\right\}$ with proper $X\in
\mathfrak{p}_0$ (direction of the geodesic). By Proposition 5.13
of \cite{kn}:
\begin{equation*}
\mathfrak{p}_0=\bigcup\limits_{k\in K_0}\text{Ad}(k)\mathfrak{a},\end{equation*}
there exists $k\in K_0$ such that 
${\rm Ad}(k)X\in\mathfrak{a}$. Taking proper conjugation if necessary, we may assume that  
there exists a closed geodesic $C_0$ on 
$\Gamma_0\backslash G_0/K_0$ which can be written as $C_0=\left\{\exp(tX){\cdot}o\,|\,t\in[0,1]\right\}$ with some $X\in
\mathfrak{a}$, or equivalently,  $C_0=\Gamma_{00}\backslash A{\cdot}o$ where $\Gamma_{00}=\Gamma\cap AM_0=\Gamma\cap AM$ (the second identity holds since $\Gamma$ is torsion-free).

\subsection{Hyperbolic distance}\label{hd}
By Iwasawa decomposition, the 
subgroup $NA\subset G$ is topologically isomorphic to $\mathbb{H}^d\cong G/K$. We realize this isomorphisim as 
$$S:\,N\times A\rightarrow\mathbb{H}^d,\quad (n,\,a)\mapsto S(na)=na\cdot\xi_0$$
where $\xi_0=(1,0,\cdots,0)\in\mathbb{H}^d$. For $r>0$, define $a_r:=a_{\log r}^+=\exp(\log r\,E)$.
There is a one-to-one correspondence between $\mathbb{R}^{d-1}\times
\mathbb{R}_+$ and $NA$:$$T:\,\mathbb{R}^{d-1}\times
\mathbb{R}_+\rightarrow NA,\quad (u,r)\mapsto n_ua_r.$$
Let $x=(u,\,r)$,
$y=(v,\,t)\in\mathbb{R}^{d-1}\times
\mathbb{R}_+$. 
The hyperbolic distance 
between two points $a=S\circ T(x)$, $b=S\circ T(y)$
on $\mathbb{H}^d$ is 
\begin{equation}\label{eq-c}\textup{d}_{\mathbb{H}^d}(a,\,b)=\textup{arccosh}^+\left[\frac{|u-v|^2+r^2+t^2}{2rt}\right].\end{equation}
Here we use $^+$ to denote the nonnegative branch of the double valued function $\textup{arccosh}$. The relation between ${\rm d}_{\mathbb{H}^d}(\cdot,\,\cdot)$ and 
${\rm d}_{G/K}(\cdot,\,\cdot)$ is given by 
\begin{equation}\label{eq-d}
{\rm d}_{\mathbb{H}^d}(a,b)=
{\rm d}_{G/K}\big(T(x){\cdot}o,T(y){\cdot}o\big)=\left\|T(y)^{-1}\cdot T(x)\right\|.
\end{equation}
For this fact, see Proposition I.7.3 and I.7.5 of \cite{fj}.

\subsection{Trace formula}\label{tr}
Let $U$ be a subset of $G$,  $f$ be a
continuous function on $G$. Define
$$f_U:\,G\rightarrow\mathbb{R}_{\geqslant 0},\quad g\mapsto\sup\limits_{x,\,y\in U}\big|f(xgy)\big|.$$
We say $f$ is \textit{uniformly integrable}  if there exists some compact neighborhood $U$ of the unity $e$
such that $f_U$ lies in $L^1(G)$. Denote by $C_{\textup{unif}}(G)$ the
set of all continuous uniformly integrable functions over $G$.
Given $f\in C_{\textup{unif}}(G)$, define
$$(R_1(f)\phi)(x)=\int\limits_{G}f(g)R_1(g)\phi(x)dg$$
for $\phi\in L^2(\Gamma\backslash G)$. Then $R_1(f)$ is an integral
operator by
\begin{lem}\label{a}
$$(R_1(f)\phi)(x)=\int\limits_{\Gamma\backslash G}K_f(x,\,y)\phi(y)\mu(y),$$
where $K_f(x,\,y)=\sum_{\gamma\in\Gamma}f(x^{-1}\gamma y)$ is
continuous on $\Gamma\backslash G\times\Gamma\backslash G$.
\end{lem}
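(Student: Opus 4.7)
The plan is to expand $(R_1(f)\phi)(x)$ by an unfold-then-fold manoeuvre against $\Gamma$, trading the integral over $G$ for an integral over $\Gamma\backslash G$ at the cost of introducing a lattice sum inside the kernel.

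First I would unfold. By definition $(R_1(f)\phi)(x)=\int_G f(g)\phi(xg)\,dg$; the substitution $g\mapsto x^{-1}g$, which preserves Haar measure since $G=SO(1,d)$ is semisimple hence unimodular, gives $(R_1(f)\phi)(x)=\int_G f(x^{-1}g)\phi(g)\,dg$. Next I would fold: choosing a Borel fundamental domain $F$ for the left action of $\Gamma$ so that $G=\bigsqcup_{\gamma\in\Gamma}\gamma F$, breaking up the integral, and using $\phi(\gamma y)=\phi(y)$ yields
\[ \int_G f(x^{-1}g)\phi(g)\,dg=\sum_{\gamma\in\Gamma}\int_F f(x^{-1}\gamma y)\phi(y)\,dy=\int_{\Gamma\backslash G}K_f(x,y)\phi(y)\,d\mu(y) \]
with $K_f(x,y)=\sum_{\gamma}f(x^{-1}\gamma y)$, provided the exchange of sum and integral is legitimate.

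The substantive step is to show that the kernel series converges absolutely and uniformly on compact subsets of $G\times G$; this simultaneously justifies the Fubini swap above and gives the continuity of $K_f$. The hypothesis $f\in C_{\mathrm{unif}}(G)$ enters exactly here. I would fix a compact $Q_1\times Q_2\subset G\times G$ and cover it by finitely many right translates $x_iU\times y_jU$, with $U$ a symmetric compact neighborhood of $e$ chosen so that $f_U\in L^1(G)$. For $(x,y)\in x_iU\times y_jU$, writing $x=x_iu_1$ and $y=y_ju_2$ gives $x^{-1}\gamma y=u_1^{-1}(x_i^{-1}\gamma y_j)u_2$, whence the pointwise bound $|f(x^{-1}\gamma y)|\leq f_U(x_i^{-1}\gamma y_j)$. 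It therefore suffices to show $\sum_\gamma f_U(x_i^{-1}\gamma y_j)<\infty$ for each fixed $(i,j)$. I would do this by choosing a smaller symmetric compact $V\subset U$ whose translates $\{\gamma V\}_{\gamma\in\Gamma}$ are pairwise disjoint (possible by discreteness of $\Gamma$) and small enough that $y_j^{-1}Vy_j\cdot U$ still lies inside a fixed $U_0$ with $f_{U_0}\in L^1(G)$; then an elementary averaging argument against Haar measure on $V$ gives $f_U(x_i^{-1}\gamma y_j)\cdot\mathrm{vol}(V)\leq\int_{\gamma V}f_{U_0}(x_i^{-1}gy_j)\,dg$, and summing over $\gamma$ together with disjointness and unimodularity yields $\sum_\gamma f_U(x_i^{-1}\gamma y_j)\leq\|f_{U_0}\|_{L^1(G)}/\mathrm{vol}(V)<\infty$, independently of $(x,y)\in Q_1\times Q_2$.

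The main obstacle is precisely this last pointwise-and-uniform bound. Weil's quotient integral formula by itself only yields a.e.\ finiteness of the lattice sum on $\Gamma\backslash G$, whereas the lemma requires pointwise finiteness and joint continuity; bridging this gap is exactly the role of the averaging inequality, which uses continuity of $f$ to convert a pointwise value of $f_U$ into an integral of $f_{U_0}$ over a small neighborhood, after which summing over the disjoint cells $\gamma V$ turns the lattice sum into a single Haar integral.
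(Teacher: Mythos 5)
Your proof is correct and matches the standard approach: the paper itself gives no proof, deferring to Sect.\,9.2 of Deitmar--Echterhoff, and your unfold-then-fold reduction followed by the averaging argument (bounding the pointwise value $f_U(x_i^{-1}\gamma y_j)$ by an integral of $f_{U_0}$ over the disjoint cells $\gamma V$, then summing to a single Haar integral) is precisely the argument in that reference. The one step you leave implicit---that $f_U\in L^1(G)$ for one compact neighborhood forces $f_{U_0}\in L^1(G)$ for the larger compact set $U_0$---is standard (cover $U_0$ by finitely many translates of $U$ and use unimodularity), but is worth stating since the definition of $C_{\mathrm{unif}}(G)$ only guarantees integrability of $f_U$ for a single $U$.
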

\noindent For details about $C_{\textup{unif}}(G)$ and the proof of this lemma, see Sect.\,9.2 of \cite{de}. The assumption in the reference, that $H$ is uniform, is necessary for the decomposition (\ref{dec}), but not for this lemma.

Let $f$ be a bi-$K$-invariant function in $C_{\textup{unif}}(G)$.
Then $R_1(f)$ acts on $V_{\lambda_j}^K\subset L^2(\Gamma\backslash G)^K$ with the
integral kernel $K_f$ since $R_1(f)\phi$ is still $K$-invariant for any $\phi\in V_{\lambda_j}^K$. The space $I(\nu_j)^K$ is one-dimensional:
any $K$-fixed function in $I(\nu_j)$ is determined by its values
at the points in $P=MAN$ thanks to the Langlands decomposition
$G=MANK$ and the the transformation law in $I(\nu_j)$. Consequently, $V_{\lambda_j}^K=\mathbb{C}\,\phi$ and there exists a scalar
$h_f(\nu_j)$ such that
$R_1(f)\phi=h_f(\nu_j)\phi$. In view of the bi-$K$-invariance of $f$ and the definition of $K_f$, we may identify $L^2(\Gamma\backslash G)^K$ with $L^2(\Gamma\backslash G/K)$, and regard $K_{f}(x,\,y)$ as
a function over $\Gamma\backslash G/K\times\Gamma\backslash G/K$. Then $R_1(f)$ induces the action of $f$ on $L^2(\Gamma\backslash G/K)$.
Likewise, $R_2(f)$ acts on $I(\nu_j)$:
$$(R_2(f)h)(x)=\int_Gf(g)R_2(g)h(x)dg,\quad h\in I(\nu_j)$$
with
integral kernel $K_f$ as above. Furthermore, we have  $R_2(f)\eta=h_f(\nu_j)\eta$ for any nontrivial
element $\eta$ in $I(\nu_j)^K$.
 
To compute $h_f(\nu_j)$, we use the model $I(\nu_j)$ and the action $R_2(f)$. Let $\eta_{\nu_j}\in I(\nu_j)^K$ be a complex-valued function over $G$ defined as $\eta_{\nu_j}(mank)=e^{(\nu_j+\rho)\log a}$. Since $\eta_{\nu_j}(1)=1$, it follows
that
$$(R_2(f)\eta_{\nu_j})(1)=h_f(\nu_j)\,\eta_{\nu_j}(1)=h_f(\nu_j).$$
By definition,\allowdisplaybreaks
\begin{eqnarray}\label{int}
(R_2(f)\eta_{\nu_j})(1)&=&\int\limits_{G}
f(g)\eta_{\nu_j}\left(g\right)dg\nonumber\\[0.2cm]
&\overset{(a)}{=}&\int\limits_{A}
\int\limits_{N}\int\limits_{K}
f\left(ank
\right)\eta_{\nu_j}(ank)dadndk\nonumber\\[0.2cm]
&=&
\int\limits_{N}\int\limits_{A}
f\left(an\right)
e^{(\nu_j+\rho)\log a}dadn
\end{eqnarray}
In the step (a) 
we have used the integral formula of functions on $G$ where the
variable is written in the $ANK$-order (see Corollary 5.3 of \cite{he}). Now we choose the Haar measures on $A$ and $N$.
Let $a=e^X$, $n=e^Y$ for $X\in\mathfrak{a}$, $Y\in\mathfrak{n}$.
Since $A$ and $N$ are abelian groups, $da:= dX$, $dn=dY$ are Haar measures on $A$, $N$ respectively, where
$dX$, $dY$ are Lebesgue measures on the Euclidean spaces $\mathfrak{a}$, $\mathfrak{n}$. Such choice of measures holds for any semisimple groups. The reason is that the group $N$ is nilpotent, while Lebesgue measures on its Lie algebras induce Haar measures of $N$ (see Theorem 2.1 of \cite{cg}). To be more precise, we have: $\int\limits_Nf(n)dn=\int\limits_{\mathfrak{n}}f(\exp Y)dY$ for any $f\in L^1(N,dn)$. Now (\ref{int}) reads
\begin{equation}\label{g-ker}
h_f(\nu_j)=
\int\limits_{\mathfrak{n}}
\int\limits_{\mathfrak{a}}
f\left(e^{X}\cdot
e^{Y}\right)e^{\nu_j(X)+\rho(X)}dXdY.
\end{equation}
We call $h_f(\nu_j)$ the {\it Harish-Chandra\,--\,Selberg transform} of $f$. The above formulation on $h_f(\nu_j)$ is due to Selberg \cite{se}. One can also use Harish-Chandra's 
theory on spherical functions to describe $h_f(\nu_j)$.

From now on we shall use $h_f(\lambda_j)$ instead of $h_f(\nu_j)$. This is reasonable: 
as $\nu_j$ is decided up to $\pm1$ for fixed $\lambda_j$, and $I(\nu_j)\cong I(-\nu_j)$ for $\nu_j\in(-\rho,\rho)\cup\,i\,\mathbb{R}$, so we have $h_f(\nu_j)=h_f(-\nu_j)$.
Assume that $f\in C_{\textup{unif}}(G)$ is a bi-$K$-invariant function
such that the series
$$k_f(z,\,w):=\sum\limits_{j=0}^{\infty}
h_f(\lambda_j)\phi_j(z)\overline{\phi_j(w)},\quad
z,\,w\in\Gamma\backslash G/K$$ locally uniformly converge everywhere. 
\begin{prop}\label{trace-kernel}$K_f$ being viewed as a function over $\Gamma\backslash G/K
\times\Gamma\backslash G/K$, we have: $ K_f=k_f$.
\end{prop}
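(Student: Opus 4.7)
The plan is to show that both $K_f$ and $k_f$ are continuous on $\Gamma\backslash G/K\times\Gamma\backslash G/K$ and that they represent the same $L^2$-kernel for the operator $R_1(f)$, hence must coincide pointwise.

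First I would check that $K_f$ genuinely lives on $\Gamma\backslash G/K\times\Gamma\backslash G/K$: by Lemma \ref{a} it is continuous on $\Gamma\backslash G\times\Gamma\backslash G$, and the bi-$K$-invariance of $f$ gives $K_f(xk_1,yk_2)=K_f(x,y)$ for $k_1,k_2\in K$, so $K_f$ descends. Next I would record the eigenfunction identity that drives everything: since $I(\nu_j)^K$ is one-dimensional, the $K$-fixed subspace $V_{\lambda_j}^K$ of every spherical summand in (\ref{Kfix}) is one-dimensional, and the scalar by which $R_1(f)$ acts on it is precisely the Harish-Chandra--Selberg transform $h_f(\lambda_j)$ computed in (\ref{g-ker}). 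Thus $R_1(f)\phi_j=h_f(\lambda_j)\phi_j$ for every $j$.

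Fix $x\in\Gamma\backslash G/K$. The function $y\mapsto\overline{K_f(x,y)}$ is continuous on the compact manifold $\Gamma\backslash G/K$, hence lies in $L^2(\Gamma\backslash G/K,\mu')$, and its Parseval expansion in the orthonormal basis $\{\phi_j\}$ has Fourier coefficients
\[
b_j(x)=\int_{\Gamma\backslash G/K}\overline{K_f(x,y)}\,\overline{\phi_j(y)}\,\mu'(y)=\overline{(R_1(f)\phi_j)(x)}=\overline{h_f(\lambda_j)}\,\overline{\phi_j(x)},
\]
using Lemma \ref{a} for the middle equality. Conjugating the resulting $L^2$ identity gives
\[
K_f(x,y)=\sum_j h_f(\lambda_j)\phi_j(x)\overline{\phi_j(y)}\qquad\text{in }L^2_y,
\]
for every fixed $x$; the right-hand side is by definition $k_f(x,y)$.

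It remains to promote this $L^2$-in-$y$ identity to pointwise equality on the full product. By hypothesis the series defining $k_f$ converges locally uniformly, so $k_f$ is continuous on $\Gamma\backslash G/K\times\Gamma\backslash G/K$. Since $K_f$ is also continuous and agrees with $k_f$ as an $L^2$-function of $y$ for each fixed $x$, the two continuous functions coincide pointwise in $y$ for every $x$, and hence everywhere. The main (mild) obstacle in this argument is precisely the last step: Parseval alone gives $L^2$-convergence, so the hypothesis of local uniform convergence of $k_f$ is indispensable to bridge to the pointwise identity $K_f=k_f$ claimed in the proposition.
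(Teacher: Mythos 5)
Your argument is correct and is essentially the paper's own: both proofs rest on the eigenfunction identity $R_1(f)\phi_j=h_f(\lambda_j)\phi_j$ (via the Selberg transform), identify the resulting expansion of $K_f$ with $k_f$ in $L^2$, and then invoke the hypothesis of locally uniform convergence, together with the continuity of $K_f$ from Lemma~\ref{a}, to upgrade the a.e.\ identity to pointwise equality. The only cosmetic difference is that you fix $x$ and compute Fourier coefficients of $\overline{K_f(x,\cdot)}$ directly, whereas the paper compares the two integral operators $R_1(f)$ and $T_k$ and deduces their kernels agree a.e.\ on the product; these are the same argument phrased differently.
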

\begin{proof} 
By Lemma \ref{a}, $R_1(f)$ is an integral operator with
continuous integral kernel $K_f$. Meanwhile
$R_1(f)\phi_j=h_f(\lambda_j)\phi_j$. Define
$$T_k:\,L^2(\Gamma\backslash G/K)\rightarrow L^2(\Gamma\backslash G/K),\quad \phi\mapsto\int\limits_{ \Gamma\backslash G/K}k_f(z,\,w)\phi(w)\mu^{\prime}(w).$$
Then, by definition $T_k$ is an integral operator such that
$T_k(\phi_j)=h_f(\lambda_j)\,\phi_j$ as $\phi_j$'s are
orthonormal to each other. Hence, $T_k$ and $R(f)$ are identical to each
other as operators and their integral kernels are equal to each
other except on a possible subset of measure zero. The locally uniform convergence of $k_f$ implies that $k_f$ is a
continuous function as all $\phi_j$'s are analytic over $\Gamma\backslash G/K$. It follows that $K_f=k_f$.
\end{proof}
By Lemma \ref{a} and Proposition \ref{trace-kernel} one has
$$\sum\limits_{\gamma\in\Gamma}f(z^{-1}\gamma
w)=\sum\limits_{j=0}^{\infty}
h_f(\lambda_j)\phi_j(z)
\overline{\phi_j(w)},\quad z,~w\in\Gamma\backslash G/K.$$
Let $\psi$ be a normalized Maass form on 
$\Gamma_0\backslash G_0/K_0$ with Laplace eigenvalues $\lambda
=\rho_0^2-\nu^2$ where $\nu\in(-\rho_0,\rho_0)\cup\,i\,\mathbb{R}$.
Integrating both sides on $\Gamma_0\backslash G_0/K_0\times \Gamma_0\backslash G_0/K_0$ with respect to the measure
$\psi(z)\overline{\psi(w)}
dzdw$ gives the (relative) trace formula
\begin{equation}\label{eq-b}
\int_{\Gamma_0\backslash G_0/K_0}\int_{\Gamma_0\backslash G_0/K_0}\sum\limits_{\gamma\in\Gamma}f(z^{-1}\gamma
w)\psi(z)\overline{\psi(w)}dzdw
=\sum\limits_{j=0}^{\infty}
h_f(\lambda_j)\,
P_Y(\phi_j,\psi)\,\overline{P_Y(\phi_j,\psi)}.
\end{equation}
The left (right) hand side of (\ref{eq-b}) is called {\it geometric} (resp. {\it spectral}\,) side. For this identity to hold, the {\it test function} $f$ should satisfy: (1) $f\in C_{\rm unif}(G)$; (2) $f$ is bi-$K$-invariant; (3) $k_f$ is locally uniformly convergent. These conditions will be checked in Sect.\,\ref{fkf} for a special $f$ chosen in the next section.

\section{The spectral side}\label{spec}
In this section we choose a test function and apply it to the spectral side of (\ref{eq-b}). 
The bi-$K$-invariance
of $f$ indicates that $f(g)$ depends exactly on the hyperbolic distance  between $g{\cdot}o$ and $e{\cdot}o$ (on $G/K$). Let $\Phi_{\mu}(x)$ be a smooth function over $\mathbb{R}_{\geqslant0}$ for any $\mu\in\mathbb{R}_+$. Define the test function $f\in C^{\infty}(G)$ as
$f(g)=\Phi_{\mu}\left({\rm d}_{G/K}(g{\cdot}o,\,e{\cdot}o)\right)$.  
Like the case of $G/K$, the quotient $G_0/K_0\cong\mathbb{H}^{n}$ can be parameterized by $\mathbb{R}^{n-1}
\times\mathbb{R}_+$ via the maps $T$, $S$ (see Sect.\,\ref{hd}).
By (\ref{g-ker}) we have
\begin{eqnarray*}
h_f(\lambda_j)&=&
\int\limits_{\mathbb{R}^{d-1}}
\int\limits_{\mathbb{R}}
\Phi_{\mu}\Big(
{\rm d}_{G/K}\left(a_x^+n_u{\cdot}o,\,e{\cdot}o\right)\Big)\cdot e^{(\nu_j+\rho)x}dxdu\\[0.2cm]
&=&\int\limits_{\mathbb{R}^{d-1}}
\int\limits_{\mathbb{R}_+}
\Phi_{\mu}\Big(
{\rm d}_{G/K}\left(a_rn_u{\cdot}o,\,e{\cdot}o\right)\Big)
\cdot r^{\nu_j+\rho-1}\,drdu
\end{eqnarray*}
where we have made the variable exchange $x\rightarrow\log r$ in the second step.
By (\ref{eq-c}) and (\ref{eq-d}) we have
 $$
{\rm d}_{G/K}\left(a_rn_u{\cdot}o,\,e{\cdot}o\right)={\rm
d}_{G/K}\left(e{\cdot}o,\,n_{-u}a_{r^{-1}}{\cdot}o\right)=
\text{arccosh}^+\left(\frac{|ru|^2+1+r^2}{2r}\right),
$$
noting that $e=n_{0}\,a_{1}$.  Originally
one would like to insert the heat kernel (see \cite{gn}), but then it is difficult to deal with the geometric
side. 
In this paper the test function is chosen to be  $$\Phi_{\mu}(x)=\exp(-\mu\cdot\cosh x).$$ It follows that
\begin{eqnarray}\label{eq-e}
h_f(\lambda_j)=
\int\limits_{\mathbb{R}^{d-1}}
\int\limits_{\mathbb{R}_+}
\exp\left[-\mu\left(\frac{|u|^2+1}{2}r+\frac{\frac{1}{2}}{r}\right)\right]
r^{\nu_j+\rho-1}drdu.
\end{eqnarray}
The
following two integral formulas on $K$-Bessel functions are useful to us:
\begin{equation}\label{eq-f}
\int\limits_{0}^{\infty}x^{\nu-1}\exp\left(-\frac{\alpha}{x}-\beta
x\right)dx=2\left(\frac{\alpha}{\beta}\right)^{\frac{\nu}{2}}K_{\nu}\left(2\sqrt{\alpha\beta}\right),\quad\text{Re}(\alpha)>0,\,\,
\text{Re}(\beta)>0.
\end{equation}
\begin{equation}\label{eq-g}
\int\limits_{0}^{\infty}
\left(x^2+b^2\right)^{\mp\frac{\nu}{2}}K_{\nu}\left(a\sqrt{x^2+b^2}\right)\cos(cx)dx=\sqrt{\frac{\pi}{2}}a^{\mp\nu}b^{\frac{1}{2}\mp\nu}\left(a^2+c^2\right)^{\pm\frac{\nu}{2}-\frac{1}{4}}K_{\pm\nu-\frac{1}{2}}\left(b\sqrt{a^2+c^2}\right)
\end{equation}
where $\text{Re}(a)>0$, $\text{Re}(b)>0$, $c$ is a real number.
These are the formulas 3.471.9 and 6.726.4 of \cite{gr}
respectively.

Let $\alpha=\frac{\mu}{2}$, $\beta=\frac{|u|^2+1}{2}\,\mu$,
$\nu=\nu_j+\rho$ in the formula (\ref{eq-f}),  then the integration along $r$ in (\ref{eq-e}) gives
\begin{equation*} 
h_f(\lambda_j)
=\int\limits_{\mathbb{R}^{d-1}}2\left(|u|^2+1\right)^{-\frac{\nu_j+\rho}{2}}K_{\nu_j+\rho}\left(\mu\sqrt{|u|^2+1}\right)du
\end{equation*}
Let $x=u_1$, $b^2=u_2^2+\cdots+u_{d-1}^2+1$, $a=\mu$, $c=0$,
$\nu=\nu_j+\rho$ in the (first case of) formula (\ref{eq-g}), then the integration along $u_1$ in the above integral is equal to 
\begin{equation}\label{eq-i}
2^d\int\limits_{0}^{\infty}\cdots\int\limits_{0}^{\infty}
\sqrt{\frac{\pi}{2\mu}}\left(\sqrt{u_2^2+\cdots+u_{d-1}^2+1}\right)^{\frac{1}{2}-(\nu_j+\rho)} 
K_{\nu_j+\rho-\frac{1}{2}}\left(\mu\sqrt{u_2^2+\cdots+u_{d-1}^2+1}\right)du_2\cdots
du_{d-1}
\end{equation}
Let $x=u_2$, $b^2=u_3^2+\cdots+u_{d-1}^2+1$, $a=\mu$, $c=0$,
$\nu=\nu_j+\rho-\frac{1}{2}$ in the formula (\ref{eq-g}), then the integration along $u_2$ in the above integral gives
\begin{equation}
(\ref{eq-i})=2^d\left(\sqrt{\frac{\pi}{2\mu}}\,\right)^2
\int\limits_{0}^{\infty}\cdots\int\limits_{0}^{\infty}
\left(\sqrt{u_3^2+\cdots+u_{d-1}^2+1}\right)^{1-(\nu_j+\rho)} 
K_{\nu_j+\rho-1}\left(\mu\sqrt{u_3^2+\cdots+u_{d-1}^2+1}\right)du_3\cdots
du_{d-1}\notag
\end{equation}
Repeating this process, i.e., doing integrations along $u_3$,
$u_4$, \dots, $u_{d-1}$ step by step in the above fashion, we
finally get\index{$h_f(\lambda_j)$}
\begin{equation*}
h_f(\lambda_j)=2^d\left(\sqrt{\frac{\pi}{2\mu}}\,\right)^{d-1} K_{\nu_j+\rho-\frac{d-1}{2}}(\mu)=2^d\left(\sqrt{\frac{\pi}{2\mu}}\,\right)^{d-1}K_{\nu_j}(\mu).
\end{equation*}
Now the spectral side of (\ref{eq-b}) reads:
$$\sum\limits_{j=0}^{\infty}2^d\left(\sqrt{\frac{\pi}{2\mu}}\,\right)^{d-1}\, K_{\nu_j}(\mu)\,\big|P_Y(\phi_j,\psi)\big|^2.$$

\section{The geometric side}\label{geo}
Under our choice of $f$ 
the geometric side of (\ref{eq-b}) splits as follows.
\begin{eqnarray*}
&&\int_{\Gamma_0\backslash G_0/K_0}\int_{\Gamma_0\backslash G_0/K_0}\sum\limits_{\gamma\in\Gamma}f(z^{-1}\gamma
w)\psi(z)\overline{\psi(w)}dzdw\notag\\[0.2cm]
&=&
\int_{\Gamma_0\backslash G_0/K_0}
\int_{\Gamma_0\backslash G_0/K_0}\,
\sum\limits_{\gamma\in
\Gamma_0}\Phi_{\mu}\left({\rm d}_{G/K}
(\gamma w,
\,z)\right)\psi(z)
\overline{\psi(w)}dzdw
\notag\\[0.2cm]
&&\quad+\,\mathcal{O}\left(
\int_{\Gamma_0\backslash G_0/K_0}
\int_{\Gamma_0\backslash G_0/K_0}\,\sum\limits_{\gamma_1,\,\gamma_2\in\Gamma_0}\,
\sum\limits_{\tilde{\gamma}
\in\Gamma_0\backslash
\Gamma/\Gamma_0
\smallsetminus
\{\tilde{1}\}}
\Phi_{\mu}\left({\rm d}_{G/K}
\left(\gamma\gamma_1w,
\,\gamma_2z\right)\right)
\psi(z)
\overline{\psi(w)}dzdw\right)
\notag\\[0.2cm]
&=&\int_{G_0/K_0}
\int_{\Gamma_0\backslash G_0/K_0}\,
\Phi_{\mu}\left({\rm d}_{G/K}(w,z)\right)\psi(z)
\overline{\psi(w)}dzdw
\notag\\[0.2cm]
&&\quad+\,\mathcal{O}\left(
\int_{G_0/K_0}
\int_{G_0/K_0}\,\,
\sum\limits_{\tilde{
\gamma}
\in\Gamma_0\backslash
\Gamma/\Gamma_0
\smallsetminus\{
\tilde{1}\}}
\Phi_{\mu}\left({\rm d}_{G/K}(\gamma w,z)\right)\psi(z)
\overline{\psi(w)}dzdw\right)
\end{eqnarray*}
where $\tilde{\gamma}$ denotes a nontrivial double coset  in 
$\Gamma_0\backslash
\Gamma/\Gamma_0$. To simplify the notations, we still use $dz$, $dw$ to denote the measure  of space $G_0/K_0$. This is reasonable since the measure of $G_0/K_0$ descends to that of the quotient space $\Gamma_0\backslash G_0/K_0$. 
It is not clear whether any 
element in 
$\Gamma\smallsetminus
\Gamma_0$
can be written as $\gamma_1
\gamma\gamma_2$
with unique $\gamma_1$, $\gamma_2\in\Gamma_0$ and some fixed representative element $\gamma$ of a 
double coset class $\tilde{\gamma}
\ne\tilde{1}$. So we have to use the expression $\mathcal{O}(\,\cdots)$ in the above formula.
Denote
$$\Sigma_0=\int_{G_0/K_0}
\int_{\Gamma_0\backslash G_0/K_0}\,
\Phi_{\mu}\left({\rm d}_{G/K}(w,z)\right)\psi(z)
\overline{\psi(w)}dzdw$$
and
$$\Sigma_1=\int_{G_0/K_0}
\int_{G_0/K_0}\,
\sum\limits_{\tilde{\gamma}
\in\Gamma_0\backslash
\Gamma/\Gamma_0
\smallsetminus\left\{\tilde{1}\right\}}
\Phi_{\mu}\left({\rm d}_{G/K}(\tilde{\gamma}w,z)\right)\psi(z)
\overline{\psi(w)}dzdw.$$
In the next two sections we shall show that $\Sigma_0$ is the main term of the geometric side, while $\Sigma_1$ is the error term.  

\subsection{The main term}\label{mt}
Like the case of $\phi\in L^2(\Gamma\backslash G/K)$, the Maass form $\psi$ also gives rise to an irreducible unitary spherical representation $V^{\prime}_{\lambda}\subset L^2(\Gamma_0\backslash G_0)$. Assume that $V^{\prime}_{\lambda}\cong I^{\prime}(\nu):={\rm Ind}_{M_0AN_0}^{G_0}(\bm{1}\otimes e^{\nu}\otimes\bm{1})$.
Here, to distinguish the representation of $G_0$ from that of $G$, we use $V^{\prime}_{\lambda}$ 
and $I^{\prime}(\nu)$, instead of $V_{\lambda}$ and
$I(\nu)$. 
Normalize the Haar measure of  $K_0$ such that ${\rm vol}(K_0)=1$. By the quotient integral formula, we can rewrite $\Sigma_0$ as
$$\Sigma_0=\int_{G_0}
\int_{\Gamma_0\backslash G_0}\,
\Phi_{\mu}\left({\rm d}_{G/K}(g{\cdot}o,x{\cdot}o)\right)\tilde{\psi}(x)
\overline{\tilde{\psi}(\bar{g})}dxdg$$
where $\tilde{\psi}$, as before, stands for the natural lift of $\psi$ on $\Gamma_0\backslash G_0$, $\bar{g}$ denotes the element $\Gamma_0{\cdot} g\in\Gamma_0\backslash G_0$, $dx$ the invariant Radon measure of 
$\Gamma_0\backslash G_0$,
$dg$ the Haar measure of $G_0$. In such  formulation, $\Sigma_0$ defines a nonzero $(K_0\times K_0)$-invariant functional $L^{\rm aut}_{\nu}$ over $V_{\lambda}^{\prime}\times
V_{\lambda}^{\prime}$
where each $K_0\times K_0$ acts on $V_{\lambda}^{\prime}\times
V_{\lambda}^{\prime}$
via $R_1\times R_1$ (the right regular translation):
$$L^{\rm aut}_{\nu}:\,V_{\lambda}^{\prime}\times
V_{\lambda}^{\prime}
\rightarrow\mathbb{C},\quad
(h_1,h_2)\mapsto\int_{G_0}
\int_{\Gamma_0\backslash G_0}\Phi_{\mu}\left({\rm d}_{G/K}(g{\cdot}o,x{\cdot}o)\right)
h_1(x)\overline{h_2(\bar{g})}
dxdg.
$$ This functional is $\mathbb{C}$-linear for its first 
entry and conjugate $\mathbb{C}$-linear for its second entry.
For $z\in\Gamma_0\backslash G_0$, $L^{\rm aut}_{\nu}$ is well-defined although ${\rm d}_{G/K}
\left(w{\cdot}o,
\,z{\cdot}o\right)$ is not. 
The space of $(K_0\times K_0)$-invariant functionals over $V_{\lambda}^{\prime}\times
V_{\lambda}^{\prime}$
is one-dimensional. The reason is as follows. In view of the equivalence between $V_{\lambda}^{\prime}$ and $I^{\prime}(\nu)$, it suffices to show that the space of $K_0$-invariant functional over $I^{\prime}(\nu)$ is one dimensional, which is clearly true by the definition of $I^{\prime}(\nu)$. As a consequence, there exists a nonzero scalar $a_{\nu}\in\mathbb{C}$ such that $$L^{\rm aut}_{\nu}(h_1,h_2)=a_{\nu}\cdot L^{\rm mod}_{\nu}(f_1,f_2)$$ for any nonzero $(K_0\times K_0)$-invariant $(\mathbb{C}\times
\overline{\mathbb{C}})$-linear functional $L^{\rm mod}_{\nu}$ on 
$I^{\prime}(\nu)
\times I^{\prime}(\nu)$ where $f_i\in I^{\prime}(\nu)$ corresponds to $h_i\in
V_{\lambda}^{\prime}$.
Define $L^{\rm mod}_{\nu}$ to be 
$$L^{\rm mod}_{\nu}(f_1,f_2)
=\int_{G_0}
\int_{\Gamma_0\backslash G_0}
\Phi_{\mu}\left({\rm d}_{G/K}(g{\cdot}o,x{\cdot}o)\right)
f_1(x)\overline{f_2(g)}dzdg,\quad f_1,~f_2\in
I^{\prime}(\nu).
$$
Then
$\Sigma_0=a_{\nu}\cdot L^{\rm mod}_{\nu}(\eta_{\nu},\eta_{\nu})$.
See Sect.\,\ref{tr} for the definition of $\eta_{\nu}$.
Now we compute $L^{\rm mod}_{\nu}(\eta_{\nu},\eta_{\nu})$. 
For $g{\cdot}o=n_ua_s{\cdot}o\in G_0/K_0$ where $u\in\mathbb{R}^{n-1}$, $s\in\mathbb{R}_+$, equip  $G_0/K_0$ with the hyperbolic measure
$d(g{\cdot}o)=\frac{dsdu}{s^n}$. Let
$\mathcal{P}(Y)$ be the completion of a subset
of $\mathbb{R}^{n-1}\times
\mathbb{R}_+$ that is isomorphic to  
$\Gamma_0\backslash G_0/K_0$. Then $\mathcal{P}(Y)$ is compact. The following commutativity property will be used frequently in this paper:
\begin{equation}\label{commu}
a_rn_u=n_{ur}a_r.
\end{equation} 
One can verify (\ref{commu}) be a direct computation, or see Proposition I.4.2 of \cite{fj}.
Actually (\ref{commu}) results from a simple fact in Lie algebra: 
${\rm ad}(E)E_i=E_i$.
As $\eta_{\nu}$ is $K_0$-invariant and ${\rm d}_{G/K}(w,z)={\rm d}_{G_0/K_0}(w,z)$ for $w$, $z\in G_0/K_0$ (note that $G_0/K_0\subset G/K$ is totally geodesic), we have:
\begin{eqnarray*}
L^{\rm mod}_{\nu}(\eta_{\nu},\eta_{\nu})
&=&\int_{G_0/K_0}
\int_{\Gamma_0\backslash G_0/K_0}
\Phi_{\mu}\left({\rm d}_{G_0/K_0}(w,z)\right)
\eta_{\nu}(z)
\overline{\eta_{\nu}(w)}dzdw
\notag\\[0.2cm]
&=&\int\limits_{\mathbb{R}^{n-1}
\times\mathbb{R}_+}\,
\int\limits_{\mathcal{P}(Y)}
\Phi_{\mu}\left({\rm d}_{G_0/K_0}(n_ua_s{\cdot}o,
\,n_va_r{\cdot}o)\right)
r^{\nu+\rho_0}
s^{\bar{\nu}+\rho_0}
\frac{drdv}{r^n}
\frac{dsdu}{s^n}\notag
\\[0.2cm]
&=&
\int\limits_{\mathcal{P}(Y)}\,
\int\limits_{\mathbb{R}^{n-1}
\times\mathbb{R}_+}
\exp\left(
-\mu\cdot\frac{|u-v|^2+s^2+r^2}{2sr}
\right)
s^{\bar{\nu}+\rho_0}
\frac{dsdu}{s^n}\cdot
r^{\nu+\rho_0}
\frac{drdv}{r^n}\notag
\\[0.2cm]
&=&
\int\limits_{\mathcal{P}(Y)}\,
\int\limits_{\mathbb{R}^{n-1}
\times\mathbb{R}_+}
\exp\left[
-\frac{\mu}{2}\left(\frac{\left|\frac{u-v}{r}\right|^2+1}{\frac{s}{r}}+\frac{s}{r}
\right)\right]
s^{\bar{\nu}+\rho_0}
\frac{dsdu}{s^n}
\cdot
r^{\nu+\rho_0}
\frac{drdv}{r^n}
\end{eqnarray*}
Let $u^{\prime}=\frac{u-v}{r}$, $s^{\prime}=\frac{s}{r}$, then $du=r^{n-1}du^{\prime}$ and
\begin{equation*}
L^{\rm mod}_{\nu}(\eta_{\nu},\eta_{\nu})
=\int\limits_{\mathcal{P}(Y)}\,
\int\limits_{\mathbb{R}^{n-1}
\times\mathbb{R}_+}
\exp\left[
-\frac{\mu}{2}\left(\frac{\left|u^{\prime}\right|^2+1}{s^{\prime}}+s^{\prime}
\right)\right]
s^{\prime\,(\bar{\nu}+\rho_0-n)}ds^{\prime}du^{\prime}
\cdot
r^{\bar{\nu}+\nu+2\rho_0-n} 
drdv.
\end{equation*}
The rest of the computation is merely a copy of that for $h_f(\lambda_j)$. Firstly,
apply (\ref{eq-f}) to the integration over $s^{\prime}$, then the right hand side of the above identity is equal to
\begin{equation*}
2\int\limits_{\mathcal{P}(Y)}\,
\int\limits_{\mathbb{R}^{n-1}}
\left(1+|u^{\prime}|^2\right)^{\frac{\bar{\nu}+\rho_0-n+1}{2}}K_{\bar{\nu}+\rho_0-n+1}\left(\mu\sqrt{1+|u^{\prime}|^2}\right)
du^{\prime}\cdot
r^{2 {\rm Re}(\nu)-1}
drdv.
\end{equation*}
Secondly, apply the second case of (\ref{eq-g}) to 
the integration over $u^{\prime}$ step by step, then the above integral is equal to   
$$2^n\left(\sqrt{\frac{\pi}{2\mu}}\,\right)^{n-1}K_{\bar{\nu}}(\mu)
\int_{\mathcal{P}(Y)}r^{2 {\rm Re}(\nu)-1}
drdv.$$
The subset $\mathcal{P}(Y)$ is compact and $r>0$ for $(v,r)\in\mathcal{P}(Y)$. Hence, the  integral $$I_{\nu}:=\int_{\mathcal{P}(Y)}r^{2 {\rm Re}(\nu)-1}
drdv$$ converges and does not vanish. Denote $b_{\nu}=2^na_{\nu}\, I_{\nu}\ne 0$. Up to now we have shown 
\begin{equation}\label{s1}
\Sigma_0=b_{\nu}\cdot
\left(\sqrt{\frac{\pi}{2\mu}}\,\right)^{n-1}K_{\bar{\nu}}(\mu).
\end{equation}

\subsection{The error term}\label{et}
In this section we give a bound for $\Sigma_1$. The main conclusion is
\begin{equation}\label{error}
\Sigma_1\ll\mu^{-(n+2)/2}e^{-\mu}.
\end{equation}
It turns out that the error term 
$\Sigma_1$ is more difficult to be treated than $\Sigma_0$. 
Like what we have done for 
$\Sigma_0$, we use the uniqueness of  $(K_0\times K_0)$-invariant $(\mathbb{C}\times
\overline{\mathbb{C}})$-linear functionals to reduce the computation of $\Sigma_1$ to
that of special integrals. 
Define 
$$\ell^{\rm aut}_{\nu}:\,V^{\prime}_{\lambda}\times V^{\prime}_{\lambda}
\rightarrow\mathbb{C},\quad(h_1,h_2)\mapsto
\int_{G_0}\int_{G_0}
\sum\limits_{\tilde{\gamma}\in
\Gamma_0\backslash\Gamma/
\Gamma_0\smallsetminus
\left\{\tilde{1}\right\}}
\Phi_{\mu}\big({\rm d}_{G/K}(\gamma g_1{\cdot}o,g_2{\cdot}o)\big)
h_1(\bar{g}_1)
\overline{h_2(\bar{g}_2)}dg_1dg_2$$
where $\bar{g}_i$ denotes the element $\Gamma_0\cdot g_i\in
\Gamma_0\backslash G_0$. It is clear that $\ell^{\rm aut}_{\nu}$ is a nonzero $(K_0\times K_0)$-invariant $(\mathbb{C}\times
\overline{\mathbb{C}})$-linear functional on $V^{\prime}_{\lambda}\times V^{\prime}_{\lambda}$ and 
$\Sigma_1=\ell^{\rm aut}_{\nu}
\left(\tilde{\psi},\tilde{\psi}\,\right)$. As before, the space of such functionals on is one-dimensional. Thus, for a given nonzero $(K_0\times K_0)$-invariant $(\mathbb{C}\times
\overline{\mathbb{C}})$-linear functional on $I^{\prime}(\nu)\times I^{\prime}(\nu)$ there exists a scalar $d_{\nu}\in\mathbb{C}$ such that
$\ell^{\rm aut}_{\nu}=d_{\nu}\cdot \ell^{\rm mod}_{\nu}$. 
Note that $d_{\nu}$ depends only on $\nu$.
 Define
$$\ell^{\rm mod}_{\nu}:\,I^{\prime}(\nu)\times I^{\prime}(\nu)\rightarrow\mathbb{C},\quad
(f_1,f_2)\mapsto\int_{G_0}\int_{G_0}
\sum\limits_{\tilde{\gamma}\in
\Gamma_0\backslash\Gamma/
\Gamma_0\smallsetminus
\left\{\tilde{1}\right\}}
\Phi_{\mu}\left({\rm d}_{G/K}(g_2{\cdot}o,g_1{\cdot}o)\right)
f_1(g_1)\overline{f_2(g_2)}dg_1dg_2.$$
Then $\ell^{\rm mod}_{\nu}$ is 
$(K_0\times K_0)$-invariant
$(\mathbb{C}\times
\overline{\mathbb{C}})$-linear on $I^{\prime}(\nu)\times I^{\prime}(\nu)$ and 
$\Sigma_1=d_{\nu}\cdot\ell^{\rm mod}_{\nu}(\eta_{\nu},\eta_{\nu})$ where $\eta_{\nu}$ is as before.
Let $w=n_u\,a_r{\cdot}o$, $z=n_v\,a_t{\cdot}o\in G_0/K_0$ where $u$, $v\in\mathbb{R}^{n-1}$
and
$r$, $t\in\mathbb{R}_+$. 
We have
$$\ell^{\rm mod}_{\nu}(\eta_{\nu},\eta_{\nu})=
\int\limits_{
\mathbb{R}^{n-1}\times\mathbb{R}_+}
\,\int\limits_{\mathbb{R}^{n-1}
\times\mathbb{R}_+}
\sum\limits_{\tilde{\gamma}\in
\Gamma_0\backslash\Gamma/
\Gamma_0\smallsetminus
\left\{\tilde{1}\right\}}
\Phi_{\mu}\big({\rm d}_{G/K}(\gamma w,z)\big)
t^{\nu+\rho_0}r^{\bar{\nu}+\rho_0}
\frac{dtdv}{t^n}\frac{drdu}{r^n}.
$$ 
Write $\gamma=a(\gamma)n(\gamma)k(\gamma)=
a_{r_0}\,
n_{w_0}\,
\begin{pmatrix}
1&0\\0&u_0
\end{pmatrix}\in ANK$
where
$w_0=(w_{01},\cdots,w_{0,d-1})\in\mathbb{R}^{d-1}$
and $u_0=(u_{ij})\in SO_d$. 
Assume that $\gamma w=n_{v_1}a_{s_1}{\cdot}o$ where $v_1=(v_{11},\cdots, v_{1,d-1})$. Denote
\begin{eqnarray*}
J_{\gamma}&=&\int\limits_{
\mathbb{R}^{n-1}\times\mathbb{R}_+}
\,\int\limits_{\mathbb{R}^{n-1}
\times\mathbb{R}_+}
\Phi_{\mu}\big({\rm d}_{G/K}(\gamma w,z)\big)
t^{\nu+\rho_0}r^{\bar{\nu}+\rho_0}
\frac{dtdv}{t^n}\frac{drdu}{r^n}\\[0.2cm]
&=&\int\limits_{
\mathbb{R}^{n-1}\times\mathbb{R}_+}
\,\int\limits_{\mathbb{R}^{n-1}
\times\mathbb{R}_+}
\exp\left(-\mu\cdot\frac{|v-v_1|^2+s_1^2+t^2}{2s_1t}\right)
t^{\nu+\rho_0-n}r^{\bar{\nu}+\rho_0-n}
dtdvdrdu
\end{eqnarray*}
We treat the integration along $t$ by using (\ref{eq-f}) and get
\begin{eqnarray*}
J_{\gamma}&=&2\int\limits_{
\mathbb{R}^{n-1}\times\mathbb{R}_+}
\,\int\limits_{\mathbb{R}^{n-1}}
\left(|v-v_1|^2+s_1^2\right)^{\frac{\nu+\rho_0-n+1}{2}}K_{\nu+\rho_0-n+1}\left(\mu\sqrt{\left|
\frac{v-v_1}{s_1}\right|^2+1}\right)r^{\bar{\nu}+\rho_0-n}dvdrdu\\[0.2cm]
&=&
2\int\limits_{
\mathbb{R}^{n-1}\times\mathbb{R}_+}
\,\int\limits_{\mathbb{R}^{n-1}}
\left(\left|\frac{v-v_1}{s_1}\right|^2+1\right)^{\frac{\nu+\rho_0-n+1}{2}}
K_{\nu+\rho_0-n+1}\left(\mu\sqrt{\left|
\frac{v-v_1}{s_1}\right|^2+1}\right)d\left(\frac{v}{s_1}\right)s_1^{\nu+\rho_0}r^{\bar{\nu}+\rho_0-n}drdu
\end{eqnarray*}
The first $(n-1)$ entries in $\frac{v_1}{s_1}$ can be absorbed in to $\frac{v}{s_1}$ when we do the integration along $\frac{v}{s_1}$, leaving the last 
$(d-n)$ components of $\frac{v_1}{s_1}$. Denote $|x|^2_{\geqslant n}=x_n^2+\cdots+x_{d-1}^2$ for $x=(x_1,\cdots,x_{d-1})\in\mathbb{R}^{d-1}$. Let $v^{\prime}=\frac{v}{s_1}$, then 
$$
J_{\gamma}=2\int\limits_{
\mathbb{R}^{n-1}\times\mathbb{R}_+}
\,\int\limits_{\mathbb{R}^{n-1}}
\left(\left|v^{\prime}\right|^2+\left|\frac{v_1}{s_1}\right|_{\geqslant n}^2+1\right)^{\frac{\nu+\rho_0-n+1}{2}}
K_{\nu+\rho_0-n+1}\left(\mu\sqrt{\left|v^{\prime}
\right|^2+\left|
\frac{v_1}{s_1}\right|_{\geqslant n}^2+1}\right)dv^{\prime}
s_1^{\nu+\rho_0}
r^{\bar{\nu}+\rho_0-n}drdu
$$
Applying the second case of (\ref{eq-g}) to $dv^{\prime}$ step by step \big(just as what we have done for $h_f(\lambda_j)$ and $\Sigma_0$\big) gives
$$J_{\gamma}=2^n
\left(\sqrt{\frac{\pi}{2\mu}}\,\right)^{n-1}
\int\limits_{
\mathbb{R}^{n-1}\times\mathbb{R}_+}
\left(\left|\frac{v_1}{s_1}\right|_{\geqslant n}^2+1\right)^{\frac{\nu}{2}}
K_{\nu}\left(\mu\sqrt{\left|
\frac{v_1}{s_1}\right|_{\geqslant n}^2+1}\right)
s_1^{\nu+\rho_0}
r^{\bar{\nu}+\rho_0-n}drdu.$$
The computation shows \big(one should distinguish $w$ in below from the $w$ that has appeared earlier (as a point on $\Gamma_0\backslash G_0/K_0$ or $G_0/K_0$)\big):
$$n_{w}a_{s}k=\begin{pmatrix}
\frac{s+s^{-1}}{2}+\frac{s^{-1}}{2}\,|w|^2,~&\cdots\\[0.2cm]
\frac{s-s^{-1}}{2}+\frac{s^{-1}}{2}\,|w|^2,&\cdots\\[0.2cm]
w_{1}\cdot s^{-1},&\cdots\\[0.2cm]
w_{2}\cdot s^{-1},&\cdots\\[0.3cm]
\vdots&\vdots\\[0.3cm]
w_{d-1}\cdot s^{-1},&\cdots
\end{pmatrix}\quad{\rm where}~w=(w_1,\cdots,w_{d-1})\in\mathbb{R}^{d-1}$$
and
$$k(\gamma)n_ua_r=
\begin{pmatrix}
\left(1+\frac{|u|^2}{2}\right)\frac{r+r^{-1}}{2}-\frac{|u|^2}{2}
\frac{r-r^{-1}}{2},&\quad\cdots &\\[0.5cm]
\left(u_{11}\,\frac{|u|^2}{2}+\sum\limits_{i=2}^nu_{1\,i}u_{i-1}\right)
\frac{r+r^{-1}}{2}+\left[u_{11}\left(1-\frac{|u|^2}{2}\right)-
\sum\limits_{i=2}^nu_{1\,i}u_{i-1}\right]\frac{r-r^{-1}}{2},&\quad\cdots&\\[0.5cm]
\left(u_{21}\,\frac{|u|^2}{2}+\sum\limits_{i=2}^nu_{2\,i}u_{i-1}\right)
\frac{r+r^{-1}}{2}+\left[u_{21}\left(1-\frac{|u|^2}{2}\right)-
\sum\limits_{i=2}^nu_{2\,i}u_{i-1}\right]\frac{r-r^{-1}}{2},&\quad\cdots&\\[0.5cm]
\vdots &\quad\vdots&\\[0.5cm]
\left(u_{d1}\,\frac{|u|^2}{2}+\sum\limits_{i=2}^nu_{d\,i}u_{i-1}\right)
\frac{r+r^{-1}}{2}+\left[u_{d1}\left(1-\frac{|u|^2}{2}\right)-
\sum\limits_{i=2}^nu_{d\,i}u_{i-1}\right]\frac{r-r^{-1}}{2},&\quad\cdots&\quad
\end{pmatrix}.$$
Let $k(\gamma)n_ua_r=n_wa_sk$ for some $k\in K$. Then
we have
\begin{equation}\label{dd1}
\frac{s+s^{-1}}{2}+\frac{s^{-1}}{2}|w|^2=\left(1+\frac{|u|^2}{2}\right)\frac{r+r^{-1}}{2}-\frac{|u|^2}{2}
\frac{r-r^{-1}}{2}
\end{equation}
\begin{equation}\label{dd2}
\frac{s-s^{-1}}{2}+\frac{s^{-1}}{2}|w|^2=
\left(u_{11}\,\frac{|u|^2}{2}+\sum\limits_{i=2}^nu_{1\,i}u_{i-1}\right)
\frac{r+r^{-1}}{2}+\left[u_{11}\left(1-\frac{|u|^2}{2}\right)-
\sum\limits_{i=2}^nu_{1\,i}u_{i-1}\right]\frac{r-r^{-1}}{2}
\end{equation}
\begin{equation}\label{dd3}
w_i\,s^{-1}=\left(u_{i+1,\,1}\,\frac{|u|^2}{2}+\sum\limits_{j=2}^nu_{i+1,\,j}u_{j-1}\right)
\frac{r+r^{-1}}{2}+\left[u_{i+1,\,1}\left(1-\frac{|u|^2}{2}\right)-
\sum\limits_{j=2}^nu_{i+1,\,j}u_{j-1}\right]\frac{r-r^{-1}}{2}\\[0.2cm]
\end{equation}
The equalities (\ref{dd1}) and (\ref{dd2}) imply
\begin{equation}\label{s1}
s^{-1}=\frac{1-u_{11}}{2}\,r+\left(\frac{1+u_{11}}{2}+\beta\right)r^{-1} 
\end{equation}
where $\beta=\left(1-u_{11}\right)\frac{|u|^2}{2}-
\sum\limits_{i=2}^nu_{1\,i}u_{i-1}$.
Let $\alpha_i=u_{i+1,\,1}\,\frac{|u|^2}{2}+
\sum\limits_{j=2}^nu_{i+1,\,j}u_{j-1}$, then (\ref{dd3}) reads
\begin{equation}\label{vs}
w_i\,s^{-1}=\frac{u_{i+1,\,1}}{2}\,r+\left(\alpha_i-\frac{u_{i+1,\,1}}{2}\right)r^{-1},\quad
1\leqslant i\leqslant d-1.
\end{equation}
By the assumption that $\gamma w=n_{v_1}a_{s_1}{\cdot}o$, we have $v_1=(w_0+w)r_0$, $s_1=r_0s$. 
For any $1\leqslant i\leqslant d-1$, denote
$$m_i=w_{0\,i}\,\frac{1-u_{11}}{2}+\frac{u_{i+1,\,1}}{2},\qquad 
n_i=w_{0\,i}\left(\frac{1+u_{11}}{2}+\beta\right)
+\left(\alpha_i-\frac{u_{i+1,\,1}}{2}\right).$$
Then the computation with the above terms shows that
$$f_{\gamma}(u,\,r):=\left|\frac{v_1}{s_1}\right|_{\geqslant n}^2+1=\left|\frac{w_0+w}{s}\right|_{\geqslant n}^2+1=M(\gamma)r^2+
N_u(\gamma)r^{-2}+
Q_u(\gamma)$$
where
\begin{equation}\label{MNQ}
M(\gamma)=\sum\limits_{i=n}^{d-1}
m_i^2,\quad
N_u(\gamma)=
\sum\limits_{i=n}^{d-1}
n_i^2,\quad
Q_u(\gamma)=1+2
\sum\limits_{i=n}^{d-1}
m_in_i.
\end{equation}
Now we have:
$$J_{\gamma}=2^n\left(\sqrt{\frac{\pi}{2\mu}}\,\right)^{n-1}
\int\limits_{\mathbb{R}^{n-1}
\times\mathbb{R}_+}
\left(\sqrt{f_{\gamma}(u,\,r)}\right)^{\nu}
K_{\nu}\left(\mu\sqrt{f_{\gamma}(u,\,r)}\right)s_1^{\nu+\rho_0}
r^{\bar{\nu}+\rho_0-n}drdu.$$
Write 
$f_{\gamma}(u,\,r)=
\left(\sqrt{M(\gamma)}\,r-
\frac{\sqrt{N_u(\gamma)}}{r}\right)^2+
2\sqrt{M(\gamma)N_u(\gamma)}+
Q_u(\gamma)$
and define
$$
\delta_u(\gamma):=
2\sqrt{M(\gamma)N_u(\gamma)}+Q_u(\gamma).$$ 
The
parameter $u$ in the subscript of $N$, $Q$ and $\delta$ indicates that these numbers depend on $u$ as
well as $\gamma$.  
Note that $M$ depends only on $\gamma$. For simplicity, we do not write 
$\gamma$, $u$ explicitly in the notations $m_i$, $n_i$.
The number $\delta_u(\gamma)$ has remarkable geometric meaning which can be interpreted from the following inequalities
$$\cosh\left({\rm d}_{G/K}(\gamma
w,\,z)\right)=\frac{|v-v_1|^2+s_1^2}{2s_1t}
+\frac{t}{2s_1}\geqslant2\sqrt{\frac{|v-v_1|^2+s_1^2}{2s_1}\cdot\frac{1}{2s_1}}=
\sqrt{\left|\frac{v-v_1}{s_1}\right|^2+1}\geqslant\sqrt{\left|\frac{v_1}{s_1}\right|_{\geqslant
n}^2+1}.$$
The ``$=$" at the first
inequality can be achieved as $t$ ranges among all positive numbers.
The last step follows from the fact that $v$ ranges among vectors in
$\mathbb{R}^{n-1}$ (so the ``$=$'' can be achieved). 
To be more precise, the two ``=" are simultaneously achieved at 
$$v={\rm Pr}_{n-1}(v_1),\quad t=\sqrt{s_1^2+|v_1|^2_{\geqslant n}}$$
where ${\rm Pr}_{n-1}$ means the projection map $\mathbb{R}^{d-1}
\rightarrow\mathbb{R}^{n-1}$, $v=(v_1,\cdots, v_{d-1})\mapsto (v_1,\cdots, v_{n-1})$.
Since $r$ ranges over all positive numbers, we have
$\left|\frac{v_1}{s_1}\right|_{\geqslant
n}^2+1=f_{\gamma}(u,\,r)\geqslant\delta_u(\gamma)$ where ``$=$" can be
obtained when $\sqrt{M(\gamma)}\,r-\frac{\sqrt{N_u(\gamma)}}{r}=0$,
i.e., $r=\sqrt{\frac{N_u(\gamma)}{M(\gamma)}}$ if $M(\gamma)\ne 0$, 
or $r=\infty$ if $M(\gamma)=0$. So $\delta_u(\gamma)$ measures the
minimal hyperbolic distance between
the two submanifolds $\gamma n_uA{\cdot}o$
and $G_0/K_0$:
\begin{equation}\label{gmd}
\inf\limits_{a\in A\cdot
o,\,z\in G_0/K_0}{\rm d}_{G/K}(\gamma n_ua{\cdot}o,\,z)=
\textup{arccosh}^+
\left(\sqrt{\delta_u(\gamma)}\right).
\end{equation}
By this formula we know that
$\delta_u(\,\cdot\,)$ is well-defined over
$\Gamma_0\backslash\Gamma$ (but not on $\Gamma/\Gamma_0$).
It is clear from the above discussion that the number
$f_{\gamma}(u,\,r)$ also has remarkable geometric meaning: it
measures the (hyperbolic) distance between the point
$\gamma n_ua_r{\cdot}o$ and the submanifold $G_0/K_0$.
More precisely,
$$
\inf\limits_{z\in G_0/K_0}{\rm d}_{G/K}(\gamma n_ua_r{\cdot}
o,\,z)
=\textup{arccosh}^+
\left(\sqrt{f_{\gamma}(u,\,r)}\right).
$$ 

The rest of this section is devoted to estimating $\Sigma_1$. The crucial ingredient in our argument is that, for each class
$\tilde{\gamma}\in
\Gamma_0\backslash \Gamma/\Gamma_0
\smallsetminus\{\tilde{1}\}$
we shall carefully choose a representative element (which satisfies some universal properties) and deduce the estimate
to a lattice counting problem. We need some technical conclusions  whose proof will be postponed to Sect.\,\ref{2p}.
The first result to be used is 
\begin{prop}\label{x2}
Let $n\geqslant 2$.
For any fixed $u\in\mathbb{R}^{n-1}$, 
we can find a representative element $\gamma$ in each class $\tilde{\gamma}\in
\Gamma_0\backslash \Gamma/\Gamma_0
\smallsetminus\{\tilde{1}\}$
such that the following hold
\begin{itemize}
\item $M(\gamma)N_u(\gamma)
\geqslant c_1>0$ where $c_1$ is independent of $u$ and the representative elements $\gamma$'s.
\item
$\delta_u(\gamma)$ achieves its minimal value
at some $u_{\gamma}\in\mathcal{F}
\subset
\mathbb{R}^{n-1}$
where $\mathcal{F}$ is a fixed compact domain 
independent of $\gamma$'s.
\item $M(\gamma)>c_2>0$ where $c_2$ is independent of $\gamma$'s.
\end{itemize}
\end{prop}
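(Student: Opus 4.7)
The plan is to exploit the fact that the integrals $J_{\gamma}$ (and hence the contribution of each class to $\Sigma_1$) depend only on the double coset $\tilde{\gamma}\in\Gamma_0\backslash\Gamma/\Gamma_0$, whereas the quantities $M(\gamma)$, $N_u(\gamma)$, $Q_u(\gamma)$ depend on the individual representative. So for each $\tilde{\gamma}\ne\tilde{1}$ I have the freedom to replace $\gamma$ by $\gamma_0\gamma\gamma_0'$ with $\gamma_0,\gamma_0'\in\Gamma_0$, and I would use this to place the totally geodesic submanifold $\gamma\cdot(G_0/K_0)$ in a geometrically canonical position relative to $G_0/K_0$, then extract the three bounds from a compactness/discreteness argument.

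The second item is the most direct. By the geometric meaning recorded in (\ref{gmd}), $\min_u\delta_u(\gamma)$ equals $\cosh^2$ of the minimal hyperbolic distance between the totally geodesic submanifolds $\gamma\cdot(G_0/K_0)$ and $G_0/K_0$ in $G/K$. Right-multiplication $\gamma\mapsto\gamma\gamma_0'$ amounts to reparameterizing the source copy of $G_0/K_0$: writing $\gamma_0' n_u a_r=n_{u''}a_{r''}k_0''$ in the Iwasawa decomposition of $G_0$, one has $\gamma\gamma_0'n_ua_r\cdot o=\gamma n_{u''}a_{r''}\cdot o$, so the minimizer $u_\gamma$ for the new representative is the image of the old one under $(\gamma_0')^{-1}$ acting on $G_0/K_0$. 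Since $Y=\Gamma_0\backslash G_0/K_0$ is compact, a compact fundamental domain $\mathcal{D}\subset\mathbb{R}^{n-1}\times\mathbb{R}_+$ exists for $\Gamma_0$ on $G_0/K_0$; taking $\mathcal{F}$ to be its projection to $\mathbb{R}^{n-1}$ gives a compact set into which a suitable $\gamma_0'$ moves $u_\gamma$.

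For items 1 and 3 I would argue by contradiction using proper discontinuity of $\Gamma$ on $G/K$. If there were a sequence of distinct classes with $M(\gamma_k)N_{u_{\gamma_k}}(\gamma_k)\to 0$, then (together with $u_{\gamma_k}\in\mathcal{F}$) some subsequence of translates $\gamma_k\cdot(G_0/K_0)$ would accumulate on $G_0/K_0$ along a fixed compact piece; proper discontinuity of $\Gamma$, plus $\tilde{\gamma}_k\ne\tilde{1}$, rules this out and yields $c_1$. The third item is subtler: $M(\gamma)=0$ corresponds geometrically to $\gamma\cdot(G_0/K_0)$ being asymptotic to $G_0/K_0$ in the $r\to\infty$ direction, i.e., the two submanifolds sharing a limit-set component at infinity. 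I would exclude this by using the residual freedom of left multiplication by $\gamma_0\in\Gamma_{00}=\Gamma\cap AM$, which commutes with the $A$-direction (hence does not undo the normalization for item 2) but acts nontrivially on the transversal components of $\gamma$, permitting a uniform lower bound $M(\gamma)>c_2$ via the same discreteness principle.

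The main obstacle is ensuring that all three bounds hold for a single, consistent representative in each double coset, with constants $c_1,c_2$ that are uniform in $\tilde{\gamma}$. Items 1 and 2 are both controlled by the right-multiplication choice, but item 3 is delicate because $M(\gamma)$ depends only on $\gamma$ (not on $u$) and encodes a codimension-$(d-n)$ transversality of $\gamma\cdot(G_0/K_0)$ with $G_0/K_0$ at the boundary. Verifying that the left-multiplication by $\Gamma_{00}$ provides enough additional freedom to enforce $M(\gamma)>c_2$ compatibly with the placement of $u_\gamma$ in $\mathcal{F}$ is the principal technical point, and presumably requires a careful analysis of how the matrix entries $u_{ij}=k(\gamma)_{ij}$ of the $K$-component of $\gamma$ transform under $\Gamma_{00}$.
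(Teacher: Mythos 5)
Your overall strategy of choosing good double-coset representatives and then exploiting discreteness is the same as the paper's, and your treatment of items 1 and 2 tracks the paper reasonably well: item 2 is handled in the paper by an explicit analysis of the polynomials $n_i(u)$ (showing the argmin $u_0$ of $N_u$, and hence of $\delta_u$, clusters near $u_{1j}/(1-u_{11})$, which is bounded by Proposition~\ref{u111}), while your compactness-of-$Y$ phrasing expresses the same geometric content; item 1 is established in the paper by a contradiction via Corollary~\ref{uag}, which is the discreteness/lattice-counting avatar of the properness argument you sketch. Those two parts of your proposal are fine in spirit, if less explicit than the paper.

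The decisive gap is in item 3. You propose to enforce $M(\gamma)>c_2$ by \emph{left} multiplication by $\gamma_0\in\Gamma_{00}$, on the grounds that this "acts nontrivially on the transversal components of $\gamma$." That is not so. The quantity $f_\gamma(u,r)=M(\gamma)r^2+N_u(\gamma)r^{-2}+Q_u(\gamma)$ is, by construction, $\cosh^2$ of the distance from $\gamma n_u a_r\cdot o$ to the totally geodesic copy $G_0/K_0$; replacing $\gamma$ by $\gamma_0\gamma$ with $\gamma_0\in\Gamma_0$ is an isometry of $G/K$ that stabilizes $G_0/K_0$, so it leaves the distance function and hence $f_\gamma(u,\cdot)$, and in particular its $r^2$-coefficient $M(\gamma)$, unchanged. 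In other words $M$, $N_u$, $Q_u$ are all invariants of the \emph{left} coset $\Gamma_0\gamma$ (the paper itself records that $\delta_u(\cdot)$ is well-defined on $\Gamma_0\backslash\Gamma$, and $M$, $N_u$, $Q_u$ inherit the same invariance). Thus left multiplication by $\Gamma_{00}\subset\Gamma_0$ provides no freedom at all for $M(\gamma)$, and your proposed mechanism collapses. The paper instead normalizes via \emph{right} multiplication by $\gamma_3=a_{\ell_0^p}k_0^p\in\Gamma_{00}$: this changes the foliation of $\gamma\cdot(G_0/K_0)$ by the geodesics $\gamma\gamma_3 n_u A\cdot o$ and, after the explicit $ANK$-bookkeeping in the proof, is used together with Proposition~\ref{u111} to control $|w_{0i}|$ and hence $M(\gamma\gamma_3)$. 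So the missing idea is that the normalization for $M$ must come from the right $\Gamma_0$-action on the double coset, and verifying compatibility with items 1 and 2 requires the concrete $ANK$ computations rather than a soft left-translation argument.
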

\noindent 
In what follows we shall select 
the representative elements that satisfy the three  
properties in this proposition. 
Let
$x=\sqrt{M(\gamma)}\,r-\frac{\sqrt{N_u(\gamma)}}{r}$, then
$r=\frac{x+\sqrt{x^2+4\sqrt{M(\gamma)N_u(\gamma)}}}{2\sqrt{M(\gamma)}}$ (since $M(\gamma)\ne0$ by Proposition \ref{x2})
and 
\begin{equation*}
J_{\gamma}=2^n\left(\sqrt{\frac{\pi}{2\mu}}\right)^{n-1}
\int\limits_{\mathbb{R}^{n-1}\times
\mathbb{R}}
F_{\gamma}(u,\,x)dxdu
\end{equation*}
where
\begin{equation}\label{Fg}
F_{\gamma}(u,\,x)=s_1^{\nu+\rho_0}
\left(
\frac{x+\sqrt{x^2+
4\sqrt{M(\gamma)N_u(\gamma)}}}{2\sqrt{M(\gamma)}}\right)
^{\overline{\nu}-\rho_0}
\frac{\left(\sqrt{x^2+\delta_u(\gamma)}
\right)^{\nu}K_{\nu}
\left(\mu\sqrt{x^2+\delta_u(\gamma)}
\right)}
{\sqrt{x^2+4\sqrt{M(\gamma)N_u(\gamma)}}}.\end{equation} 
Recall that 
$$s_1=r_0s=\frac{r_0}{\frac{1-u_{11}}{2}\,r+\left(\frac{1+u_{11}}{2}+\beta\right)r^{-1}}.$$
The term 
$\frac{1+u_{11}}{2}+\beta$ is nonnegative since by Cauchy inequality we have
\begin{eqnarray*}
\frac{1+u_{11}}{2}+\beta
&=&\left\{(1-u_{11})|u|^2-2\sum\limits_{i=2}^{n}u_{1i}u_{i-1}
+(1+u_{11})\right\}\big/2\\[0.2cm]
&\geqslant&
\sqrt{\left(1-u_{11}^2\right)\cdot|u|^2}
-\sum\limits_{i=2}^{n}u_{1i}u_{i-1}\\[0.2cm]
&\geqslant& 
\sqrt{\sum\limits_{i=2}^{n}u_{1i}^2
\cdot
\sum\limits_{i=2}^{n}u_{i-1}^2}
-\sum\limits_{i=2}^{n}u_{1i}u_{i-1}\\[0.2cm]
&\geqslant&0
\end{eqnarray*}
The $``="$ holds if and only $u_{1i}=0$ ($n+1\leqslant i\leqslant d$), $u_{i-1}=t_0\cdot u_{1i}$ ($2\leqslant i\leqslant n$) for some constant $t_0$, and $(1-u_{11})|u|^2=1+u_{11}$. These conditions lead to 
at most one solution of $u_{\gamma}$ (up to $\pm1$) for any given $\gamma$. Since we are doing integration along $u$, the possible solution $u_{\gamma}$ can be neglected. Thus,
$$s_1\leqslant
\frac{r_0}{\sqrt{(1-u_{11})(1+u_{11})+2(1-u_{11})\beta}}.$$
In Sect.\,\ref{2p} we shall show 
\begin{prop}\label{u111}
$\sup\limits_{\gamma\in
\Gamma\smallsetminus
\Gamma_0}\big|u_{11}(\gamma)\big|<1$.
\end{prop}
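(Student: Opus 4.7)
The plan is to identify the algebraic condition $|u_{11}(\gamma)|=1$ with a parabolic membership, rule this out pointwise for $\gamma\in\Gamma\setminus\Gamma_0$ via the cocompactness of $\Gamma$, and then strengthen the strict pointwise inequality into a uniform bound using the normalization of representatives from Proposition~\ref{x2}.

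Writing $k(\gamma)=\mathrm{diag}(1,u_0)$ with $u_0\in SO(d)$, the unit-norm relation on the first column of $u_0$ yields $|u_{11}(\gamma)|=1\Leftrightarrow u_0\cdot e_1=\pm e_1\Leftrightarrow k(\gamma)\in M\cup w_0M$, where $w_0=\mathrm{diag}(1,-1,-1,1,\ldots,1)\in K_0\subset G_0$ (using $n\geq 2$) represents the nontrivial Weyl element of $(G,A)$. Under the identification $K/M\cong\partial\mathbb{H}^d$ sending $eM$ to $\xi^+$ and $w_0M$ to $\xi^-$ (the endpoints of the geodesic $A\cdot o\subset G_0\cdot o$), this reads $u_{11}(\gamma)=1\Leftrightarrow\gamma\in P=MAN$, and $u_{11}(\gamma)=-1\Leftrightarrow\gamma\in Pw_0$. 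Now $\Gamma$ is a torsion-free cocompact lattice in the rank-one group $G$, hence word-hyperbolic, so any discrete solvable subgroup is virtually cyclic; torsion-freeness then makes it cyclic. Applied to $\Gamma\cap P$: this cyclic group contains the infinite cyclic group $\Gamma_{00}=\Gamma\cap MA$, generated by the primitive loxodromic element whose axis $A\cdot o$ projects to the closed geodesic $C_0\subset Y$; any generator of $\Gamma\cap P$ shares its axis with its nontrivial power in $\Gamma_{00}$, forcing that axis to be $A\cdot o$ and the generator to lie in $\Gamma\cap MA=\Gamma_{00}\subset\Gamma_0$. The $u_{11}=-1$ case is handled by the parallel argument using $w_0\in G_0$. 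Hence for every individual $\gamma\in\Gamma\setminus\Gamma_0$, $|u_{11}(\gamma)|<1$ strictly.

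The main obstacle is upgrading the pointwise strictness to a uniform bound. Suppose for contradiction that $\{\gamma_j\}\subset\Gamma\setminus\Gamma_0$ satisfies $|u_{11}(\gamma_j)|\to 1$; WLOG $u_{11}(\gamma_j)\to 1$, and along a subsequence $k(\gamma_j)\to k_\infty\in M$. If $\{\gamma_j\cdot o\}$ remains in a compact subset of $\mathbb{H}^d$, then discreteness of $\Gamma$ in $G$ forces $\gamma_j$ to be eventually constant at some $\gamma_\infty$ with $u_{11}(\gamma_\infty)=1$, so $\gamma_\infty\in\Gamma_0$ by the previous step, contradicting $\gamma_j\in\Gamma\setminus\Gamma_0$. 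The hard case is when $\gamma_j\cdot o\to\partial\mathbb{H}^d$ with $\gamma_j$ itself diverging in $G$; to handle it I would invoke the normalization of Proposition~\ref{x2}, replacing each $\gamma_j$ by a representative $\gamma_j'$ of its double coset $\tilde\gamma_j$ whose $\delta_u$-minimizer lies in the fixed compact set $\mathcal{F}\subset\mathbb{R}^{n-1}$ and for which $M(\gamma_j')>c_2>0$. Applied to these normalized representatives, the explicit Iwasawa matrix identities (\ref{dd1})--(\ref{dd3}) translate a putative $u_{11}(\gamma_j')\to 1$ into a collapse of the transverse components $m_i,n_i$ ($n\leq i\leq d-1$), contradicting the uniform lower bound $M(\gamma_j')>c_2$.
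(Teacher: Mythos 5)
Your pointwise argument (the content of Lemma~\ref{ANM}) is correct but follows a genuinely different route from the paper's. You characterize $|u_{11}(\gamma)|=1$ as $\gamma\in P$ or $\gamma\in P w_0$, which is accurate, and then use word-hyperbolicity of the cocompact torsion-free lattice and rigidity of boundary-point stabilizers to conclude $\Gamma\cap P=\Gamma_{00}\subset\Gamma_0$. The paper's Lemma~\ref{ANM} instead normalizes a putative $\gamma\in(ANM\cap\Gamma)\smallsetminus\Gamma_0$ by $\Gamma_{00}$ on both sides, using the commutativity relations (\ref{commu}), (\ref{commu2}), (\ref{commu3}) to shrink its $N$-component and put its $A$-component into a fundamental interval, then invokes discreteness of $\Gamma$. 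Your route is conceptually tidy but imports nontrivial machinery; the paper's is elementary and self-contained. (A minor slip: $\Gamma\cap P$ sits inside the \emph{amenable} group $P=MAN$, not a solvable one --- $M\cong SO(d-1)$ is nonsolvable once $d\geqslant 4$; the fact you want is that amenable subgroups of hyperbolic groups are virtually cyclic.)

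The uniform step, however, is genuinely broken, in three independent ways. Most seriously, Proposition~\ref{x2} cannot be invoked here: the paper's proof of Proposition~\ref{x2} cites Proposition~\ref{u111} several times --- e.g.\ to place $u_{0i}=-\tfrac{u_{i+1,j}-w_{0i}u_{1j}}{2m_i}$ into a bounded interval independent of $\gamma$ via the factor $(1-u_{11})^{-1}$, and again in the final compactness argument --- so using \ref{x2} to prove \ref{u111} is circular. Second, even granting \ref{x2}, the statement to be proved is a supremum over \emph{all} of $\Gamma\smallsetminus\Gamma_0$, so you are not free to replace $\gamma_j$ by a different representative $\gamma_j'$ of its double coset; and in any case there is no reason $u_{11}(\gamma_j')\to1$ should persist, since left and right multiplication by general elements of $\Gamma_0$ (as opposed to $\Gamma_{00}$, which preserves the $M$-coset of the $K$-factor) changes the $K$-component of the $ANK$ factorization substantially. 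Third, the final inference ``$u_{11}\to1$ collapses $m_i,n_i$, contradicting $M(\gamma)>c_2$'' does not hold: from $m_i=w_{0i}\tfrac{1-u_{11}}{2}+\tfrac{u_{i+1,1}}{2}$, a sequence with $u_{11}\to1$ and $w_{0i}$ growing at rate $(1-u_{11})^{-1}$ keeps $m_i$, and hence $M(\gamma)$, bounded away from zero, so no contradiction arises. The paper's own proof of Proposition~\ref{u111} sidesteps all of this by re-running the $\Gamma_{00}$-normalization-plus-discreteness mechanism from Lemma~\ref{ANM} on the hypothetical sequence, with no appeal to Proposition~\ref{x2}.
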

\noindent This proposition indicates that $(1-u_{11})(1+u_{11})+2(1-u_{11})\beta$ 
is a polynomial of degree 2 (with respect to each variable $u_i$). Hence, the denominator of $s_1$ (i.e., $s^{-1}$) grows (at least) polynomially with degree $1$ and positive minimum value \big(as  $\frac{1-u_{11}}{2}\,r+\left(\frac{1+u_{11}}{2}+\beta\right)r^{-1}$ is strictly positive if we neglect $u_{\gamma}$\big).
Multiplying proper $\gamma_0\in\Gamma_{00}$ to the left side of $\gamma$ if necessary, we assume that $r_0$ lies in a compact interval in $\mathbb{R}_+$.  

\noindent
{\bf Case 1.} First we assume that $\big|\Gamma_0\backslash \Gamma/\Gamma_0\big|<\infty$. The case $\big|\Gamma_0\backslash \Gamma/\Gamma_0\big|=\infty$
will be treated later. When $\mu$ is large, $\mu\sqrt{x^2+\delta_u(\gamma)}$ is
also very large since
$\delta_u(\gamma)\geqslant1$. 
By the well-known asymptotic of $K$-Bessel function: 
\begin{equation}\label{kasymp}
K_{z}(x)\sim
\sqrt{\frac{\pi}{2x}}\,e^{-x},\quad{\rm as}~ x\rightarrow\infty
\end{equation} 
the function
$X^{\nu}K_{\nu}\left(
\mu X\right)$ decreases with respect to $X$ when $\mu$ is large. Hence
$$\left|\left(\sqrt{x^2+\delta_u(\gamma)}
\right)^{\nu}K_{\nu}
\left(\mu\sqrt{x^2+\delta_u(\gamma)}
\right)\right|\leqslant
\left(\sqrt{x^2+1}
\right)^{{\rm Re}\,\nu}K_{\nu}
\left(\mu\sqrt{x^2+1}
\right)$$ as $\mu\rightarrow\infty$.
Since $K_{\nu}
\left(\mu\sqrt{x^2+1}
\right)\sim K_{{\rm Re}\,\nu}
\left(\mu\sqrt{x^2+1}
\right)$, we have
$$\left|\int_{\mathbb{R}}
\left(\sqrt{x^2+\delta_u(\gamma)}
\right)^{\nu}K_{\nu}
\left(\mu\sqrt{x^2+\delta_u(\gamma)}
\right)dx\right|\ll
\int_{\mathbb{R}}
\left(\sqrt{x^2+1}
\right)^{{\rm Re}\,\nu}K_{{\rm Re}\,\nu}
\left(\mu\sqrt{x^2+1}
\right)dx.$$
Let $y=x^2+1$ in right hand side of
the above integral and apply the formula 6.592.12 of 
\cite{gr}
\begin{equation*}
\int_1^{\infty}x^{-\frac{b}{2}}(x-1)^{c-1}K_{z}
\left(a\sqrt{x}\right)dx=
2^{c}\,\Gamma(c)a^{-c}K_{b-c}(a),\quad \textup{Re}(a)>0,\,\,\textup{Re}(c)>0,
\end{equation*}
we get
\begin{equation}\label{k111}
\int_0^{\infty}
\left(\sqrt{x^2+\delta_u(\gamma)}
\right)^{{\rm Re}\,\nu}K_{{\rm Re}\,\nu}
\left(\mu\sqrt{x^2+\delta_u(\gamma)}
\right)dx=\frac{\Gamma(1/2)}{\sqrt{2}}
\mu^{-1/2}
K_{-{\rm Re}\,\nu-1/2}(\mu).
\end{equation}
By Cauchy inequality, we have $$\sqrt{M(\gamma)N_u(\gamma)}\geqslant
2\sum\limits_{i=n}^{d-1}m_in_i.$$
The term $n_i$ expands as follows 
\begin{equation}\label{nexp}
n_i=|u|^2\,\frac{w_{0i}(1-u_{11})+u_{i+1,\,1}}{2}+\sum\limits_{j=2}^n\left(u_{i+1,\,j}-w_{0i}u_{1j}\right)u_{j-1}+w_{0i}\frac{1+u_{11}}{2}-\frac{u_{i+1,\,1}}{2}.
\end{equation}
Note that $m_i=\frac{w_{0i}(1-u_{11})+u_{i+1,\,1}}{2}$. Therefore,
\begin{equation}\label{mns}
\sum\limits_{i=n}^{d-1}
m_in_i=|u|^2\,\underbrace{
\sum\limits_{i=n}^{d-1}
m_i^2}_{=M(\gamma)}+
\sum\limits_{i=n}^{d-1}
\sum\limits_{j=2}^{n}m_i\left(u_{i+1,\,j}-w_{0i}u_{1j}\right)u_{j-1}+
\sum\limits_{i=n}^{d-1}
m_i\left(w_{0i}\frac{1+u_{11}}{2}-\frac{u_{i+1,\,1}}{2}\right).\end{equation}
This means that $\sqrt{M(\gamma)N_u(\gamma)}$ grows  polynomially with degree $2$ (with respect to each variable $u_i$) with a positive minimum value for all $\gamma\notin\Gamma_0$ (by Proposition \ref{x2}).

When $x\geqslant 0$, one has 
$$\left(x+\sqrt{x^2+4\sqrt{M(\gamma)
N_u(\gamma)}}
\right)^{{\rm Re}\,\nu-\rho_0}\leqslant
\left(2\sqrt[4]{M(\gamma)
N_u(\gamma)}
\right)^{{\rm Re}\,\nu-\rho_0}$$
noting that ${\rm Re}\,\nu-\rho_0\leqslant0$.
As a consequence, 
\begin{equation*}
\left|\left(
\frac{x+\sqrt{x^2+
4\sqrt{M(\gamma)N_u(\gamma)}}}{2\sqrt{M(\gamma)}}\right)
^{\overline{\nu}-\rho_0}
\frac{s_1^{\nu+\rho_0}}
{\sqrt{x^2+4
\sqrt{M(\gamma)N_u(\gamma)}}}\right|\ll
\left(
\sqrt[4]{M(\gamma)N_u(\gamma)}
\right)^{-(\rho_0-
{\rm Re}\,\nu+1)}\left(s^{-1}\right)^{-(\rho_0+
{\rm Re}\,\nu)}.
\end{equation*}
We have known that both $\sqrt[4]{M(\gamma)N_u(\gamma)}$ and $s^{-1}$ grow (at least) polynomially with degree 1 (with respect to each variable $u_i$) with positive minimum value. Since $\rho_0=\frac{n-1}{2}>0$ for $n\geqslant 2$, the following integral  $$\int_{\mathbb{R}^{n-1}}\left(
\sqrt[4]{M(\gamma)N_u(\gamma)}
\right)^{-(\rho_0-
{\rm Re}\,\nu+1)}(s^{-1})^{-(\rho_0+
{\rm Re}\,\nu)}du$$
converges
as the function inside 
is positive and has polynomial degree (at most) $$-(\rho_0-{\rm Re}\,\nu+1)-
(\rho_0+{\rm Re}\,\nu)=-2\rho_0-1<-1.$$
Note that the term $\left(2\sqrt{M(\gamma)}
\right)^{\rho_0-\overline{\nu}}$
in $F_{\gamma}(u,x)$ does not give essential contribution to the upper bound 
since $\big|\Gamma_0\backslash \Gamma_0/\Gamma_0\big|<\infty$. In view of 
(\ref{k111}), we get
\begin{equation}\label{SITU1}
\int_{\mathbb{R}^{n-1}}
\int_0^{\infty}
F_{\gamma}(u,\,x)dxdu
\ll\mu^{-1/2}K_{-{\rm Re}\,\nu-1/2}(\mu)\ll \mu^{-1}e^{-\mu}.\end{equation}

When $x\leqslant-\omega$ with any $\omega\in(0,1)$, one has
\begin{eqnarray*}
&&\left(x+\sqrt{x^2+4\sqrt{M(\gamma)
N_u(\gamma)}}
\right)^{{\rm Re}\,\nu-\rho_0}
\notag\\[0.2cm]
&=&\left(\frac{\sqrt{x^2+
4\sqrt{M(\gamma)N_u(\gamma)}}-x}{4\sqrt{M(\gamma)N_u(\gamma)}}
\right)^{\rho_0-{\rm Re}\,\nu}\notag\\[0.2cm]
&=&(-x)^{\rho_0-{\rm Re}\,\nu}
\left(
\frac{\sqrt{1+\frac{
4\sqrt{M(\gamma)N_u(\gamma)}}{x^2}}+1}{4\sqrt{M(\gamma)N_u(\gamma)}}
\right)^{\rho_0-{\rm Re}\,\nu}\notag\\[0.2cm]
&=&(-x)^{\rho_0-{\rm Re}\,\nu}
\left(\sqrt{\frac{1}{16M(\gamma)N_u(\gamma)}+\frac{1}{4x^2\sqrt{M(\gamma)N_u(\gamma)}}}
+\frac{1}{4\sqrt{M(\gamma)N_u(\gamma)}}
\right)^{\rho_0-{\rm Re}\,\nu}
\notag\\[0.2cm]
&\overset{(\sharp)}{\leqslant}&
(-x)^{\rho_0-{\rm Re}\,\nu}
\left(
\sqrt{\frac{1}{16M(\gamma)N_u(\gamma)}}+
\sqrt{\frac{1}{4x^2\sqrt{M(\gamma)N_u(\gamma)}}}+
\frac{1}{4\sqrt{M(\gamma)N_u(\gamma)}}
\right)^{\rho_0-{\rm Re}\,\nu}
\notag\\[0.2cm]
&=&(-x)^{\rho_0-{\rm Re}\,\nu}\left(2\sqrt[4]{M(\gamma)N_u(\gamma)}
\right)^{-(\rho_0-{\rm Re}\,\nu)}
\left(\frac{1}{-x}+
\frac{1}{\sqrt[4]{M(\gamma)
N_u(\gamma)}} 
\right)^{\rho_0-{\rm Re}\,\nu}
\notag\\[0.2cm]
&\leqslant&(-x)^{\rho_0-{\rm Re}\,\nu}\left(2\sqrt[4]{M(\gamma)N_u(\gamma)}
\right)^{-(\rho_0-{\rm Re}\,\nu)}
\left(\frac{1}{-x}+
\frac{1}{\sqrt[4]{c_1}} 
\right)^{\rho_0-{\rm Re}\,\nu}
\notag\\[0.2cm]
&\ll&
(-x)^{\rho_0-{\rm Re}\,\nu}\left(2\sqrt[4]{M(\gamma)N_u(\gamma)}
\right)^{-(\rho_0-{\rm Re}\,\nu)}
\left(\frac{1}{\omega}
\right)^{\rho_0-{\rm Re}\,\nu}
\end{eqnarray*}
provided that $\omega$ is small enough.
At the step ($\sharp$) we have used
the inequality $\sqrt{a+b}\leqslant\sqrt{a}+\sqrt{b}$.
Thus, 
\begin{multline*}
\int_{\mathbb{R}^{n-1}}
\int_{-\infty}^{-\omega}
F_{\gamma}(u,\,x)dxdu\ll
\int_{\mathbb{R}^{n-1}}
\int_{-\infty}^{-\omega}
(-x)^{\rho_0-{\rm Re}\,\nu}\left(2\sqrt[4]{M(\gamma)N_u(\gamma)}
\right)^{-(\rho_0-{\rm Re}\,\nu)}
\frac{1}{\omega^{\rho_0-{\rm Re}\,\nu}}\\[0.2cm]
\times \left(2\sqrt[4]{M(\gamma)N_u(\gamma)}
\right)^{-1}
(s^{-1})^{-(\rho_0+{\rm Re}\,\nu)}
\left(\sqrt{x^2+\delta_u(\gamma)}
\right)^{\nu}K_{\nu}
\left(\mu\sqrt{x^2+\delta_u(\gamma)}
\right)
dxdu
\end{multline*} 
As before, the integral
$$\int_{\mathbb{R}^{n-1}}\left(
\sqrt[4]{M(\gamma)N_u(\gamma)}
\right)^{-(\rho_0-
{\rm Re}\,\nu+1)}(s^{-1})^{-(\rho_0+
{\rm Re}\,\nu)}du$$
converges. 
Since $K_{\nu}\left(\mu
\sqrt{x^2+1}
\right)\sim\sqrt{\frac{\pi}{2\mu
\sqrt{x^2+1}}}e^{-\mu
\sqrt{x^2+1}}$ as $\mu\rightarrow\infty$,
the ratio of $K_{\nu}\left(\mu
\sqrt{x^2+1}
\right)$ at $x=0$ and $\mu^{-1/2}$ is asymptotically equal to
$$\frac{1}{\sqrt[4]{1+\mu^{-1}}}{\cdot}
\frac{e^{-\mu}}{e^{-\mu
\sqrt{\mu^{-1}
+1}}}$$
which converges to $e^{1/2}$ as $\mu\rightarrow\infty$. 
Let $\omega=\mu^{-1/2}$, then
\begin{multline*}
\int_{-\infty}^{-\mu^{-1/2}}
(-x)^{\rho_0-{\rm Re}\,\nu}\left(\sqrt{x^2+1}
\right)^{\nu}K_{\nu}
\left(\mu\sqrt{x^2+1}
\right)dx\\[0.2cm]
\asymp\,
\int_{-\infty}^0
(-x)^{\rho_0-{\rm Re}\,\nu}\left(\sqrt{x^2+1}
\right)^{\nu}K_{\nu}
\left(\mu\sqrt{x^2+1}
\right)dx,\quad{\rm as}~\mu\rightarrow\infty.\end{multline*}
Hence, we have
\begin{eqnarray}
&&\int_{\mathbb{R}^{n-1}}
\int_{-\infty}^0
F_{\gamma}(u,\,x)dxdu\notag\\[0.2cm]
&\asymp&\int_{\mathbb{R}^{n-1}}
\int_{-\infty}^{-\mu^{-1/2}}
F_{\gamma}(u,\,x)dxdu\notag\\[0.2cm]
&\ll&
\mu^{(\rho_0-{\rm Re}\,\nu)/2}\int_{-\infty}^{-\mu^{-1/2}}
(-x)^{\rho_0-{\rm Re}\,\nu}\left(\sqrt{x^2+
\delta_u(\gamma)}
\right)^{{\rm Re}\,\nu}K_{{\rm Re}\,\nu}
\left(\mu\sqrt{x^2+\delta_u(\gamma)}
\right)dx\notag\\[0.2cm]
&<&
\mu^{(\rho_0-{\rm Re}\,\nu)/2}\int_{-\infty}^0
(-x)^{\rho_0-{\rm Re}\,\nu}\left(\sqrt{x^2+1}
\right)^{{\rm Re}\,\nu}K_{{\rm Re}\,\nu}
\left(\mu\sqrt{x^2+1}
\right)dx\notag\\[0.2cm]
&\overset{(\ast)}{=}&\mu^{(\rho_0-{\rm Re}\,\nu)/2}\cdot2^{(\rho_0-{\rm
Re}\,\nu-1)/2}\mu^{-
(\rho_0-{\rm
Re}\,\nu+1)/2}\Gamma\left(
\frac{\rho_0-{\rm
Re}\,\nu+1}{2}\right)
K_{-{\rm Re}\,\nu-(\rho_0-{\rm
Re}\,\nu+1)/2}\big(\mu\big)\notag\\[0.2cm]
&\ll&\mu^{-1}e^{-\mu}\label{SITU2}
\end{eqnarray}
where the step $(\ast)$ is computed in the same way with (\ref{k111}). By (\ref{SITU1}) and (\ref{SITU2}),  $$\int_{\mathbb{R}^{n-1}}
\int_{\mathbb{R}} 
F_{\gamma}(u,\,x)dxdu\ll\mu^{-1} e^{-\mu}$$
which proves (\ref{error}).

\medskip
\noindent 
{\bf Case 2.}
Now we deal with the case $\big|\Gamma_0\backslash \Gamma_0/\Gamma_0\big|=\infty$.  
By (\ref{Fg}) and Proposition \ref{x2} we have
\begin{multline}\label{temp1}
\big|J_{\gamma}\big|\ll
\mu^{-(n-1)/2} \int_{
\mathbb{R}^{n-1}}
\int_{\mathbb{R}}
s^{\rho_0+{\rm Re}\,\nu}
\left(2\sqrt{M(\gamma)}
\right)^{\rho_0-{\rm Re}\,\nu}
\left(x+\sqrt{x^2+
4\sqrt{M(\gamma)
N_u(\gamma)}}
\right)^{{\rm Re}\,\nu-
\rho_0}\\[0.2cm]
\times K_{\nu}
\left(\mu\sqrt{x^2+\delta_u(\gamma)}
\right)dxdu.
\end{multline}
By (\ref{kasymp}) and the inequality $\sqrt{a+b}\geqslant
\frac{\sqrt{2}}{2}\left(\sqrt{a}+\sqrt{b}\right)$,
\begin{equation}\label{temp2}
K_{\nu}\left(\mu
\sqrt{x^2+\delta_u(\gamma)}
\right)\sim
\sqrt{\frac{\pi}{2\mu\sqrt{x^2+\delta_u
(\gamma)}}}e^{-\mu\sqrt{x^2+
\delta_u(\gamma)}}
\ll\mu^{-1/2}e^{-\frac{\sqrt{2}}{2}\mu|x|}e^{-\frac{\sqrt{2}}{2}\mu\sqrt{
\delta_u(\gamma)}}.
\end{equation}
Combining (\ref{temp1}) and (\ref{temp2}) yields
\begin{equation*}\label{temp3}
\big|J_{\gamma}\big|
\ll\mu^{-n/2}\int_{
\mathbb{R}^{n-1}}
\int_{\mathbb{R}}
s^{\rho_0+
{\rm Re}\,\nu}e^{-\frac{\sqrt{2}}{2}\mu|x|}
\left(x+\sqrt{x^2+
4\sqrt{M(\gamma)
N_u(\gamma)}}
\right)^{{\rm Re}\,\nu-
\rho_0}
\left(2\sqrt{M(\gamma)}
\right)^{\rho_0-{\rm Re}\,\nu}e^{-\frac{\sqrt{2}}{2}\mu\sqrt{
\delta_u(\gamma)}}dxdu.
\end{equation*}
The integral 
$$\int\limits_{\mathbb{R}}
s^{\rho_0+{\rm Re}\,\nu}e^{-\frac{\sqrt{2}}{2}\mu|x|}\left(x+\sqrt{x^2+
4\sqrt{M(\gamma)
N_u(\gamma)}}
\right)^{{\rm Re}\,\nu-
\rho_0}dx$$ converges
and is uniformly upper bounded by a constant since the function inside has exponential decay
and $\mu$ is large (note that $s$ is a positive rational function of $x$). 

As 
$\delta_u(\gamma)$ grows polynomially with respect to
the variables $\sqrt{M(\gamma)}\,u_i$,
for any $\varepsilon\in
(0,1)$
we have
\begin{eqnarray*}
&&
\sum\limits_{\tilde{\gamma}
\in\Gamma_0\backslash \Gamma/\Gamma_0\smallsetminus
\{\tilde{1}\}}\,\,
\int_{
\mathbb{R}^{n-1}}
\left(2\sqrt{M(\gamma)}
\right)^{\rho_0-{\rm Re}\,\nu}e^{-\frac{\sqrt{2}}{2}\mu
\sqrt{\delta_u(\gamma)}
}du\\[0.2cm]
&=&\sum\limits_{\tilde{\gamma}
\in\Gamma_0\backslash \Gamma/\Gamma_0\smallsetminus
\{\tilde{1}\}}\,\,\int_{
\mathbb{R}^{n-1}}
\left(2\sqrt{M(\gamma)}
\right)^{\rho_0-{\rm Re}\,\nu}e^{-\frac{\sqrt{2}}{2}\mu
\sqrt{\varepsilon^2
\delta_u(\gamma)+
\left(1-\varepsilon^2\right)
\delta_u(\gamma)}
}du\\[0.2cm]
&\leqslant&\sum\limits_{\tilde{\gamma}
\in\Gamma_0\backslash \Gamma/\Gamma_0\smallsetminus
\{\tilde{1}\}}\,\,
\int_{
\mathbb{R}^{n-1}}
\left(2\sqrt{M(\gamma)}
\right)^{\rho_0-{\rm Re}\,\nu}e^{-\frac12\mu
\varepsilon
\sqrt{
\delta_u(\gamma)}}\cdot
e^{-\frac12\mu
\sqrt{1-\varepsilon^2}
\sqrt{\delta_u(\gamma)}}
du
\\[0.2cm]
&\ll&\int_{
\mathbb{R}^{n-1}}
\left(\sqrt{M(\gamma)}
\right)^{-\rho_0-{\rm Re}\,\nu}
e^{-\frac12\mu
\varepsilon
\sqrt{
\delta_u(\gamma)}}
d\left(\sqrt{M(\gamma)}\,
u\right)\times
\sum\limits_{\tilde{\gamma}
\in\Gamma_0\backslash \Gamma/\Gamma_0\smallsetminus
\{\tilde{1}\}}\,\,
e^{-\frac12\mu
\sqrt{1-\varepsilon^2}
\sqrt{\delta_{u_{_{\ast}}}(\gamma)}}
\end{eqnarray*}
for some $u_{_{\ast}}\in\mathcal{F}$. At the second step we have used the 
inequality $\sqrt{a+b}\geqslant
\frac{\sqrt{2}}{2}(\sqrt{a}+\sqrt{b})$. At the last step we have used Proposition \ref{x2}.
Note that
$\left(\sqrt{M(\gamma)}
\right)^{-\rho_0-{\rm Re}\,\nu}$ is uniformly upper bounded since $M(\gamma)>c_2>0$. Thus, the integral in the last step
converges. It follows that
\begin{equation}\label{jgl}
J_{\gamma}\ll\mu^{-n/2} e^{-\frac12\mu
\sqrt{1-\varepsilon^2}
\sqrt{\delta_{u_{_{\ast}}}(\gamma)}}
\end{equation}
In Sect.\,\ref{2p} we shall show
\begin{prop}\label{x1}
For any fixed $u\in\mathbb{R}^{n-1}$
and sequence $\Lambda_u\subset
\Gamma$ where each element in
$\Lambda_u$ represents exactly one double 
class in
$\Gamma_0\backslash
\Gamma/\Gamma_0$, there exist a positive number $c_3$ such that
$$\pi_{\Lambda_u}(x):=\#\{\gamma\in\Lambda_u\,|\,\delta_u(\gamma)\leqslant x\}\leqslant c_3\cdot x^{(d-n)/2},\quad\textup{as}~x\rightarrow\infty.$$
\end{prop}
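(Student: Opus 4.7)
The plan is to interpret $\delta_u(\gamma)\leqslant x$ geometrically via (\ref{gmd}) and then reduce the count to a lattice point estimate for the rank-one symmetric pair $(G,G_0)$. With $\tilde Y:=G_0/K_0\subset\tilde X:=G/K$, minimizing (\ref{gmd}) over $u\in\mathbb{R}^{n-1}$ yields
$$
\min_u\delta_u(\gamma)=\cosh^2 {\rm d}_{G/K}(\gamma\tilde Y,\,\tilde Y),
$$
using that $N_0 A$ acts simply transitively on $\tilde Y$ and hence $N_0 A\cdot o=\tilde Y$. This minimum depends only on the double coset of $\gamma$, because $\Gamma_0\subset G_0$ stabilises $\tilde Y$ on both sides. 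Hence $\delta_u(\gamma)\leqslant x$ forces ${\rm d}_{G/K}(\gamma\tilde Y,\,\tilde Y)\leqslant R:=\textup{arccosh}^+(\sqrt x)=\frac12\log x+O(1)$. Since $x^{(d-n)/2}\asymp e^{(d-n)R}$, it suffices to establish
$$
\#\bigl\{\tilde\gamma\in\Gamma_0\backslash\Gamma/\Gamma_0\smallsetminus\{\tilde 1\}\,:\,{\rm d}_{G/K}(\gamma\tilde Y,\,\tilde Y)\leqslant R\bigr\}\ll e^{(d-n)R}.
$$

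The pair $(G,G_0)=\bigl(SO(1,d),\,O(1,n)\times O(d-n)\bigr)$ is a real rank-one symmetric pair: a maximal abelian subspace of the complement $\mathfrak{p}\ominus\mathfrak{p}_0\cong\mathbb{R}^{d-n}$ is one-dimensional, because $K_0\cong O(n)\times O(d-n)$ acts transitively on the unit sphere of $\mathbb{R}^{d-n}$. For the unique positive restricted root $\alpha$ of this pair, the effective multiplicity is
$$
\dim\mathfrak{g}_\alpha-\dim(\mathfrak{g}_\alpha\cap\mathfrak{g}_0)=(d-1)-(n-1)=d-n,
$$
using $\mathfrak{n}\cap\mathfrak{g}_0=\mathfrak{n}_0$ from Sect.\,\ref{sym}. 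Consequently, the ball of radius $R$ in the pseudo-Riemannian symmetric space $G/G_0$ has volume growing like $\asymp e^{(d-n)R}$.

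Finally, the set $\Omega_R:=\{g\in G:{\rm d}_{G/K}(g\tilde Y,\,\tilde Y)\leqslant R\}$ is bi-$G_0$-invariant, so the double cosets to be counted correspond to $\Gamma_0$-orbits on $(\Gamma/\Gamma_0)\cap(\Omega_R/G_0)\subset G/G_0$. Because $\Gamma$ is a uniform lattice in $G$ and $\Gamma_0$ a uniform lattice in $G_0$, a standard lattice point argument bounds this count by the volume of the $R$-ball in $G/G_0$ (up to the covolumes of $\Gamma$ and $\Gamma_0$), giving $\ll e^{(d-n)R}\asymp x^{(d-n)/2}$ as required.

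\textbf{Main obstacle.} The delicate ingredient is the volume estimate for the $R$-ball in $G/G_0$ using the rank-one Harish-Chandra structure of the symmetric pair, together with justifying that the double coset equivalence is compatible with the volume counting in $G/G_0$. As a direct check, the explicit formulas (\ref{MNQ}) combined with Proposition \ref{x2} (ensuring $M(\gamma)\geqslant c_2$ and $M(\gamma)N_u(\gamma)\geqslant c_1$) show that the condition $\delta_u(\gamma)\leqslant x$ confines the $(d-n)$ transverse parameters $w_{0,i}$ ($i\geqslant n$) and $u_{i+1,1}$ to a region of volume $\asymp x^{(d-n)/2}$, whence the bound follows by a lattice density argument applied in this region.
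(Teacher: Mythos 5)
Your geometric interpretation of $\delta_u(\gamma)$ is correct, and the broad strategy — pass from $\delta_u(\gamma)\leqslant x$ to the double-coset-invariant condition ${\rm d}_{G/K}(\gamma\,G_0/K_0,\,G_0/K_0)\leqslant R$ with $R=\textup{arccosh}^+(\sqrt x)$, then count by a lattice point argument — is the same underlying intuition as the paper. However, the two decisive steps you invoke, namely the volume growth $\asymp e^{(d-n)R}$ for the $R$-ball in the pseudo-Riemannian space $G/G_0$ and the ``standard lattice point argument'' controlling the double-coset count by that volume, are precisely the content Proposition \ref{x1} is supposed to supply, and you assert rather than prove them. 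The paper does not work in $G/G_0$ at all. For each double coset and the \emph{fixed} $u$ it manufactures an explicit representative $\gamma^{\ast}(u)$ inside $\Omega_u(x)=\{g_0 n_v k\,:\,g_0\in G_0^{\ast},\ v\in\mathbb{R}^{d-1}_n,\ |v|^2+1\leqslant x,\ k\in K\}$, where $G_0^{\ast}$ is a compact fundamental domain of $\Gamma_0\backslash G_0$; Lemma \ref{cl} shows that distinct double cosets give distinct $\gamma^{\ast}$'s, and Lemma \ref{hi} (resting on Lemma \ref{5a}) shows these representatives are uniformly separated in $G$, so the count is bounded by the Euclidean volume of the ball $|v|^2\leqslant x-1$ in $\mathbb{R}^{d-n}$, i.e.\ $O(x^{(d-n)/2})$. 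The separation Lemma \ref{hi} is exactly where the compactness of $\Gamma_0\backslash G_0$ and the discreteness of $\Gamma$ enter, and it is not a formal consequence of $\Gamma$ and $\Gamma_0$ being lattices: one is counting $\Gamma_0$-double cosets, not points of a lattice in $G/G_0$, and the discreteness of the resulting $G_0$-orbits needs the argument of Lemma \ref{hi}.

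Your fallback ``direct check'' has a circularity you should be aware of: you invoke Proposition \ref{x2} to secure $M(\gamma)\geqslant c_2$ and $M(\gamma)N_u(\gamma)\geqslant c_1$, but in the paper Proposition \ref{x1} is proved first and its Corollary \ref{uag} is then used to prove the third item of Proposition \ref{x2}, so Proposition \ref{x2} cannot be assumed here. The paper's own proof of Proposition \ref{x1} requires no such positivity input: the degenerate case $M(\gamma)N_u(\gamma)=0$ is handled separately by sending $r\to 0^+$ or $r\to\infty$ and still producing a representative in $\Omega_u(x+\varepsilon_0)$.
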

\noindent 
We may arrange the order of
the elements in $\Lambda_u$ and get $\Lambda_u=\{\gamma_{_j}\}$ such that  
$\delta_u(\gamma_{_j})$ is nondecreasing as $j$ grows. Then we have
\begin{prop}\label{dltm}
$\delta_u(\gamma_{_j})\gg j^{\frac{1}{(d-n)/2+\beta}}$ 
for any fixed $\beta>0$ and $u$.
\end{prop}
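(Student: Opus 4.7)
The plan is to deduce Proposition~\ref{dltm} directly from Proposition~\ref{x1} by inverting the counting function in the standard way. First I would check that the reindexing $\Lambda_u=\{\gamma_{_j}\}$ with $\delta_u(\gamma_{_j})$ nondecreasing is well-defined: the estimate $\pi_{\Lambda_u}(x)\leqslant c_3\,x^{(d-n)/2}$ in Proposition~\ref{x1} is finite for each $x$, so only finitely many elements of $\Lambda_u$ lie in any bounded sublevel set of $\delta_u$; hence the values $\delta_u(\gamma)$ on $\Lambda_u$ have no finite accumulation point and can be listed in nondecreasing order.

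Once this ordering is in place, the defining property of the reindexing gives, for every $j$, that $\gamma_{_1},\ldots,\gamma_{_j}$ all satisfy $\delta_u(\gamma_{_i})\leqslant\delta_u(\gamma_{_j})$, so they are all counted by $\pi_{\Lambda_u}\bigl(\delta_u(\gamma_{_j})\bigr)$. This yields the key inequality
$$j\,\leqslant\,\pi_{\Lambda_u}\bigl(\delta_u(\gamma_{_j})\bigr).$$
For $j$ sufficiently large (so that $\delta_u(\gamma_{_j})$ lies in the asymptotic regime of Proposition~\ref{x1}), combining with the upper bound $\pi_{\Lambda_u}(x)\leqslant c_3\,x^{(d-n)/2}$ applied at $x=\delta_u(\gamma_{_j})$ gives
$$j\,\leqslant\,c_3\,\delta_u(\gamma_{_j})^{(d-n)/2},\qquad\text{i.e.,}\qquad\delta_u(\gamma_{_j})\,\geqslant\,(j/c_3)^{2/(d-n)}.$$

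Finally, since $\tfrac{2}{d-n}=\tfrac{1}{(d-n)/2}\,>\,\tfrac{1}{(d-n)/2+\beta}$ for any $\beta>0$, we have $j^{2/(d-n)}\geqslant j^{1/((d-n)/2+\beta)}$ for all $j\geqslant 1$, and absorbing the constant $c_3^{-2/(d-n)}$ into the $\gg$ notation gives exactly the claimed bound. There is essentially no obstacle beyond the reindexing step; the proposition is a soft corollary of the counting bound, and the slack $\beta$ allows one to ignore the constant $c_3$ and focus on the polynomial growth exponent — which will be the form actually used to sum the tail $\sum_{j}e^{-\tfrac12\mu\sqrt{1-\varepsilon^2}\sqrt{\delta_u(\gamma_{_j})}}$ appearing after inequality~(\ref{jgl}).
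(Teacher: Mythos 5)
Your proof is correct and is essentially the same counting-function-inversion argument the paper uses; the paper phrases it as a proof by contradiction (assuming $\delta_u(\gamma_{[j_i^{(d-n)/2+\beta}]})/j_i\to 0$ and deriving $\pi(j_i)\geqslant [j_i^{(d-n)/2+\beta}]$, contradicting Proposition~\ref{x1}), whereas you invert directly via $j\leqslant\pi_{\Lambda_u}(\delta_u(\gamma_{_j}))\leqslant c_3\,\delta_u(\gamma_{_j})^{(d-n)/2}$. Your direct version is cleaner and in fact yields the sharper exponent $2/(d-n)$, making explicit that the $\beta$-slack in the statement is not needed.
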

\begin{proof}
It suffices to show that $\delta
\left(
\tilde{\gamma}_{\left[j^{(d-n)/2+\beta}\right]}\right)\gg j$, where $[x]$ means taking the maximal integer that does not exceed $x$. Assume that
there exists a sequence $\{j_{_i}\}_{i=1}^{\infty}$ such that $j_{_i}$ increases as 
$i$ grows and $$\frac{\delta
\left(
\tilde{\gamma}_{_{\left[
j_{_i}^{(d-n)/2+\beta}\right]}}\right)}{j_{_i}}\rightarrow0$$ as 
$i\rightarrow\infty$. 
Then all $\tilde{\gamma}_{_{\left[
j_{_i}^{(d-n)/2+\beta}\right]}}$ are 
contained in the set $\left\{\tilde{\gamma}\,|\,\delta(\tilde{\gamma})\leqslant j_{_i}\right\}$. This implies that $\pi(j_{_i})\geqslant\left[
j_{_i}^{(d-n)/2+\beta}\right]$ since $\delta(\gamma_{_j})$ is nondecreasing 
as $j$ grows, contradicting Proposition \ref{x1}.
\end{proof}
Let $u=u_{\ast}$, then by Proposition \ref{dltm} there exists $j_{_0}>0$ such that $\delta_{u_{\ast}}(\gamma_{_j})\geqslant \frac{4}{1-\varepsilon^2}\,j^{2/d}$
for any $j\geqslant j_{_0}$. Denote $J_j=J_{\gamma_{_j}}$. 
By (\ref{jgl}) we have
$$\sum\limits_{j=j_{_0}}^{\infty}
J_j\ll
\mu^{-n/2}
\sum\limits_{j=j_{_0}}^{\infty}
e^{-\mu\,j^{1/d}}< \mu^{-n/2}
\int_{1}^{\infty}e^{-\mu\,x^{1/d}}dx
=d\mu^{-n/2}\int_{1}^{\infty}e^{-\mu\,y}y^{d-1}dy$$
where we made the variable exchange $x^{1/d}\rightarrow y$ in the last step. 
Integration by parts shows that the integral on the right hand side is upper bounded by $\mu^{-1}e^{-\mu}$.
Hence, $$\sum\limits_{j=j_{_0}}^{\infty}
J_j\ll
\mu^{-(n+2)/2}e^{-\mu}.$$
As for those $J_{j}$ where $1\leqslant j\leqslant j_{_0}$, we apply the argument
that has been done for the
case $\big|\Gamma_0
\backslash \Gamma/\Gamma_0
\big|<\infty$ and get
$\sum\limits_{j=1}^{j_{_0}}
J_j\ll 
\mu^{-(n+1)/2}e^{-\mu}$, thereby we
have shown (\ref{error}).

Putting the data on geometric and spectral sides
together, we get
\begin{eqnarray*}\sum\limits_{j=0}^{\infty}2^d\left(\sqrt{\frac{\pi}{2\mu}}\,\right)^{d-1}\, K_{\nu_j}(\mu)\,\big|P_Y(\phi_j,\psi)\big|^2&=&b_{\nu}\cdot
\left(\sqrt{\frac{\pi}{2\mu}}\,\right)^{n-1}K_{\bar{\nu}}(\mu)+
\mathcal{O}\left(e^{-\mu}
\mu^{-(n+1)/2}\right)\end{eqnarray*}
In view of the asymptotic (\ref{kasymp}) of $K$-Bessel function,  
multiplying $2^{-d}\left(\sqrt{\frac{2\mu}{\pi}}\right)^{n}e^{\mu}$
on both sides of this formula and taking the limitation
$\mu\rightarrow\infty$ yields
\begin{equation}\label{main}
\lim\limits_{\mu\rightarrow\infty}
e^{\mu}\left(\sqrt{\frac{\pi}{2\mu}}\,\right)^{d-n-1}
\sum\limits_{j=0}^{\infty}\, K_{\nu_j}(\mu)\,\big|P_Y(\phi_j,\psi)\big|^2=c_{\nu}.
\end{equation}
where $c_{\nu}=2^{-d}b_{\nu}$.
The K-Bessel function $K_{\nu_i}(\mu)$ is positive
for $\nu_j\in\,i\,\mathbb{R}$, so the right hand side of (\ref{main}) is nonnegative. Since $b_{\nu}\ne0$ (see Sect.\,\ref{mt}), the scalar 
$c_{\nu}$ is positive. 
\begin{proof}[Proof of Theorem \ref{thm}]
Assume that there are only finitely many $\phi_j$
such that $P_Y(\phi_j,\psi)\ne0$. By the aysmptotic formula (\ref{kasymp}),
the left hand side of (\ref{main})
is equal to zero, a contradiction as $c_{\nu}>0$. 
\end{proof}

\subsection{Proofs of  Proposition \ref{x2}, \ref{u111},  \ref{x1}}\label{2p} 
In this section we prove the propositions that have been used in the previous section. The commutativity relations listed below, as well as (\ref{commu}), will be used frequently:
\begin{equation}\label{commu2}
{\rm diag}(1,1,k)\cdot n_u=n_{uk^T}\cdot{\rm diag}(1,1,k),\quad u\in\mathbb{R}^{d-1},\quad k\in SO(d-1).
\end{equation}
\begin{equation}\label{commu3}
{\rm diag}(1,-1,k^{\prime})\cdot 
a_r=
a_{r^{-1}}\cdot{\rm diag}(1,-1,k^{\prime}),\quad
k^{\prime}\in O(d-1),~\det(k^{\prime})=-1.
\end{equation}
These properties are easy to verify. Under our assumption on $\Gamma_{00}$ (see Sect.\,\ref{sym}), there exists 
$\gamma_0=a_{\ell_0}k_0\in AM_0$ such that $\Gamma_{00}=
\langle\gamma_0\rangle$. It is clear that $\ell_0\ne1$. We might as well assume that $\ell_0>1$.
\begin{lem}\label{ANM}
$u_{11}(\gamma)\ne\pm 1$ for any $\gamma\in\Gamma\smallsetminus
\Gamma_0$. 
\end{lem}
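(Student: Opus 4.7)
The plan is to prove the contrapositive: if $\gamma\in\Gamma$ satisfies $u_{11}(\gamma)\in\{\pm1\}$, then $\gamma\in\Gamma_0$. Orthogonality of $u_0\in SO(d)$ together with $u_{11}=\pm1$ forces the first row and column of $u_0$ to equal $\pm e_1$, so $u_0=\mathrm{diag}(\epsilon,u_0')$ with $\epsilon\in\{\pm1\}$, $u_0'\in O(d-1)$ and $\det u_0'=\epsilon$. Consequently $k(\gamma)\in M$ when $\epsilon=+1$, and $k(\gamma)\in wM$ when $\epsilon=-1$, where $w\in N_K(A)$ is a fixed representative of the nontrivial Weyl element (so $wa_rw^{-1}=a_r^{-1}$ by (\ref{commu3}) and $w$ normalizes $M$). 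I split into these two cases.

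Case $\epsilon=+1$: $\gamma\in AN\cdot M=MAN=P$, the minimal parabolic stabilizing the boundary point $\xi=eM\in K/M\simeq\partial\mathbb{H}^d$. I then argue $\Gamma\cap P=\Gamma_{00}$. Since $\Gamma$ is cocompact and torsion-free, every nonidentity element is loxodromic; by discreteness, two loxodromic elements sharing one boundary fixed point must share both, hence the same axis. The generator $\gamma_0\in\Gamma_{00}\subset\Gamma\cap P$ has axis $A\cdot o$, so every element of $\Gamma\cap P$ has axis $A\cdot o$ and thus lies in the pointwise stabilizer $MA$ of the two endpoints. Combined with $\Gamma\cap MA=\Gamma_{00}$ from Sect.\,\ref{sym}, this gives $\gamma\in\Gamma\cap P=\Gamma_{00}\subset\Gamma_0$.

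Case $\epsilon=-1$: Write $k(\gamma)=wm$ with $m\in M$. Since $AN\xi=M\xi=\xi$ we get $\gamma\xi=k(\gamma)\xi=w\xi=:\xi_-$. A direct manipulation — rewrite $\gamma w=a_{r_0}n_v\cdot wmw=a_{r_0}n_v\cdot w^2(w^{-1}mw)$ and note that both $w^2$ and $w^{-1}mw$ lie in $M$ — shows $\gamma w\in ANM\subset P$, hence $\gamma\xi_-=\gamma w\cdot\xi=\xi$. Therefore $\gamma^2$ fixes $\xi$, so $\gamma^2\in\Gamma\cap P=\Gamma_{00}$ by Case $\epsilon=+1$. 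Torsion-freeness gives $\gamma^2\ne e$, so $\gamma^2$ is loxodromic with axis $A\cdot o$; consequently the axis of $\gamma$ is also $A\cdot o$. Since $\gamma$ swaps its two endpoints, $\gamma$ lies in the coset $MAw$ of the setwise stabilizer $MA\sqcup MAw$ of $A\cdot o$, and uniqueness of Iwasawa forces the $N$-component of $\gamma$ to vanish, giving $\gamma=a_{r_0}m'w$ for some $m'\in M$. Squaring and using $wa_{r_0}w^{-1}=a_{r_0}^{-1}$ together with $wMw^{-1}=M$ yields
\[
\gamma^2 \;=\; m'\,(wm'w^{-1})\,w^2 \;\in\; M.
\]
But $\Gamma\cap M$ is a discrete subgroup of the compact group $M$, hence finite, and torsion-freeness forces $\Gamma\cap M=\{e\}$. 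So $\gamma^2=e$, and one more application of torsion-freeness gives $\gamma=e\in\Gamma_0$, contradicting $\gamma\notin\Gamma_0$.

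The step requiring the most care is the $\epsilon=-1$ case, in particular the chain of reductions ``$\gamma^2\in\Gamma_{00}$ $\Rightarrow$ axis of $\gamma$ is $A\cdot o$ $\Rightarrow$ $\gamma\in MAw$ $\Rightarrow$ $\gamma=a_{r_0}m'w$'', which simultaneously uses discreteness, torsion-freeness, uniqueness of Iwasawa, and the explicit description of the setwise stabilizer of the $A$-axis. The terminal squaring that lands $\gamma^2$ in the compact group $M$ is the mechanism that activates torsion-freeness to force $\gamma=e$ and yield the contradiction.
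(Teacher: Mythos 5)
Your $\epsilon=+1$ case is correct and takes a genuinely different route from the paper. Where the paper argues by conjugating $\gamma$ by elements of $\Gamma_{00}$ to shrink the $N$-component and then invokes discreteness, you classify $\gamma$ as loxodromic (via cocompactness and torsion-freeness), and use the standard discreteness fact that two loxodromics sharing one fixed boundary point share both, hence the same axis, to pin $\gamma$ into $MA\cap\Gamma=\Gamma_{00}$. That is a clean, geometric argument.

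The $\epsilon=-1$ case, however, contains a genuine error. You assert ``Since $AN\xi=M\xi=\xi$ we get $\gamma\xi=k(\gamma)\xi$.'' That identity would be valid if $k(\gamma)$ were the $K$-component in the $KAN$ Iwasawa order (where the $K$-factor sits on the left and so determines the image of $\xi$), but the paper defines $k(\gamma)$ via the $ANK$ order: $\gamma=a_{r_0}n_{w_0}k(\gamma)$. Tracking the action on the boundary, one finds $\gamma\xi=a_{r_0}n_{w_0}\bigl(k(\gamma)\xi\bigr)=n_{w_0r_0}(\xi_-)$, which differs from $\xi_-$ as soon as $w_0\neq 0$. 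Consequently $\gamma^2\xi\neq\xi$ in general, so the chain ``$\gamma^2\in\Gamma\cap P=\Gamma_{00}$'' does not get off the ground. One can check concretely (already in $d=2$ with $\gamma=a_r n_u w$, $u\ne 0$) that $\gamma^2$ does not stabilize $\xi$. The computation $\gamma w=a_{r_0}n_{w_0}\cdot w^2(w^{-1}mw)\in ANM=P$, giving $\gamma\xi_-=\xi$, is correct; but that alone does not yield $\gamma\xi=\xi_-$. The gap can be closed along your own lines by conjugating $\gamma_0$: since $\gamma\gamma_0\gamma^{-1}$ fixes $\gamma\xi_-=\xi$, your shared-fixed-point lemma forces $\gamma\gamma_0\gamma^{-1}\in\Gamma_{00}$, hence $\gamma$ preserves the axis $A\cdot o$ and (as it sends $\xi_-$ to $\xi$) must swap its endpoints, after which your $\gamma\in MAw$, $\gamma^2\in\Gamma\cap M=\{e\}$ argument lands the contradiction. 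But as written, the crucial step ``$\gamma\xi=\xi_-$'' is unjustified and in fact false.
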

\begin{proof}
Assume that $u_{11}(\gamma)=1$, i.e., $\gamma\in (ANM\cap\Gamma)
\smallsetminus
\Gamma_0$. Write $\gamma=a_{r_0}n_{w_0}k(\gamma)\in ANM$ where $k(\gamma)={\rm diag}(1,1,k)$ with $k\in SO(d-1)$. Multiplying $\gamma_1=a_{r_1}{\rm diag}(1,1,k_1)$, 
$\gamma_2=a_{r_2}{\rm diag}(1,1,k_2)\in\Gamma_{00}$
(where $k_1$, $k_2\in SO(d-1)$) to
the right and left sides of $\gamma$ yields
$$\gamma_2\gamma\gamma_1=
a_{r_2r_0r_1}
n_{w_0k_2^Tr_1^{-1}}\cdot
{\rm diag}(1,1,k_2kk_1).
$$
Here we have use the commutativity relations (\ref{commu}) and (\ref{commu2}). With proper $r_1$, $r_2$,
we may assume that 
$r_2r_0r_1$ lies in $(1,\,\ell_0]$, and 
$\big|w_0k_2^Tr_1^{-1}\big|=
|w_0|r_1^{-1}$ is small enough.
This means that $\gamma_2\gamma\gamma_1$ is 
close to $AM\cap\Gamma=\Gamma_{00}$.
The discreteness of $\Gamma$
implies that $\gamma_2\gamma\gamma_1$ lies in $\Gamma_{00}\subset\Gamma_0$, whence $\gamma\in\Gamma_0$. This contradicts the assumption. The case that $u_{11}(\gamma)=-1$ can be disproved  in
the same fashion where we should use (\ref{commu3}).  
\end{proof}
\begin{proof}[Proof of Proposition \ref{u111}]
If there exists a sequence $\{\gamma_i\}\subset\Gamma
\smallsetminus\Gamma_0$ such that $u_{11}(\gamma_i)\rightarrow1$,
then $\gamma_i$
is close to $ANM\cap\Gamma$. By Lemma Lemma \ref{ANM} and the  discreteness of $\Gamma$,
$\gamma_i$ lies 
in $ANM\cap\Gamma=
\Gamma_0$ for large enough $i$. However, we have assumed that $\gamma\notin\Gamma_0$.
The case $u_{11}(\gamma_i)\rightarrow-1$ can be shown analogously.
\end{proof}
\begin{lem}\label{5a}
The image of $\Gamma_0\backslash\Gamma$ under the map $\Gamma_0\backslash\Gamma\rightarrow\Gamma_0\backslash G$ is discrete and has no accumulation point in $\Gamma_0\backslash G$
with respect to the topology defined by the invariant Riemann metric of $\Gamma_0\backslash G$.
\end{lem}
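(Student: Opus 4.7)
The plan is to argue by contradiction, reducing the discreteness/no-accumulation statement on the quotient $\Gamma_0\backslash G$ to the well-known discreteness of $\Gamma$ inside $G$. Suppose, toward a contradiction, that there is a sequence of pairwise distinct cosets $\Gamma_0\gamma_i$ in the image (with $\gamma_i\in\Gamma$) which converges in $\Gamma_0\backslash G$ to some point $\bar g=\Gamma_0 g$. By definition of the quotient distance induced by the invariant Riemannian metric, one can choose $\gamma_0^{(i)}\in\Gamma_0$ so that $\gamma_0^{(i)}\gamma_i\to g$ in $G$.

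Next, I would exploit the invariance of the metric on $G$ under left translations to compare the lifts. For large $i,j$ the elements $\gamma_0^{(i)}\gamma_i$ and $\gamma_0^{(j)}\gamma_j$ are arbitrarily close to one another, hence the element
\[
\delta_{ij}:=\bigl(\gamma_0^{(i)}\gamma_i\bigr)^{-1}\bigl(\gamma_0^{(j)}\gamma_j\bigr)=\gamma_i^{-1}\bigl(\gamma_0^{(i)}\bigr)^{-1}\gamma_0^{(j)}\gamma_j\in\Gamma
\]
lies in an arbitrarily small neighborhood of the identity in $G$. Because $\Gamma$ is a torsion-free discrete subgroup of $G$, there exists a neighborhood $U$ of $e$ such that $\Gamma\cap U=\{e\}$. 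Consequently, once $i,j$ are large enough, we must have $\delta_{ij}=e$, i.e.\ $\gamma_0^{(j)}\gamma_j=\gamma_0^{(i)}\gamma_i$, which forces $\Gamma_0\gamma_i=\Gamma_0\gamma_j$ and contradicts the assumed distinctness of the cosets.

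The same reasoning, applied to a sequence $\Gamma_0\gamma_i\to\Gamma_0\gamma$ with $\Gamma_0\gamma_i\ne\Gamma_0\gamma$, shows that every point of the image is isolated; taken together these two steps give both discreteness and the absence of accumulation points, hence the image is closed and discrete in $\Gamma_0\backslash G$.

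There is no real obstacle here: the lemma is essentially a packaging of the discreteness of $\Gamma$ in $G$ together with the definition of the quotient metric, and the only points to be careful about are (i) making sure the representatives $\gamma_0^{(i)}\gamma_i$ are genuinely chosen so that the sequence converges in $G$ itself (using the local homeomorphism $G\to\Gamma_0\backslash G$ near $g$, which is valid because $\Gamma_0$ acts properly discontinuously on $G$), and (ii) noting that the argument uses only left-invariance of the metric, so no additional structure is required.
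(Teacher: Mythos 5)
Your proof is correct and takes essentially the same route as the paper: both lift the convergent sequence to $G$ by choosing $\Gamma_0$-representatives and then invoke the discreteness of $\Gamma$ in $G$ to conclude that the lifted terms must eventually coincide, contradicting distinctness of the cosets. The only cosmetic difference is that you compare two sequence terms $\gamma_0^{(i)}\gamma_i$ and $\gamma_0^{(j)}\gamma_j$ to each other (a Cauchy-type argument), which handles a general limit point $\Gamma_0 g$ and the isolation of image points uniformly, whereas the paper compares each term to a fixed limit representative $\gamma\in\Gamma$ and then remarks that the other half of the claim follows in the same way.
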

\begin{proof}
We show the first part of the lemma since the second part follows in the same
way. Assume that the sequence $\{\Gamma_0{\cdot}
\gamma_i\}\subset
\Gamma_0\backslash G$ converges to $\Gamma_0{\cdot}\gamma
$ where $\gamma_i$, $\gamma\in\Gamma$, and $\Gamma_0{\cdot}\gamma_i
\ne\Gamma_0{\cdot}\gamma_j
$ for $i\ne j$. Then there exist a sequence $\{\eta_i\}\subset\Gamma_0$ and a compact neighborhood $W_i$ of $e$ such that $\gamma^{-1}\eta_i\gamma_i\in W_i\cap\Gamma$. By passing to a subsequence if necessary, we may assume that $\left\{
\gamma^{-1}\eta_i\gamma_i
\right\}$ converges. 
The compact neighborhood $W_i$ can be close to $e$
arbitrarily for $i$ large,
which means that $W_i\cap\Gamma=\{e\}$ for $i$ large enough. Hence,
$\gamma^{-1}\eta_i\gamma_i
=e$ 
for $i$ large enough. As a consequence, $\Gamma_0{\cdot}\gamma_i
=\Gamma_0{\cdot}\gamma_j
$ for $i$, $j$ large enough, a contradiction.
\end{proof}
\begin{proof}[Proof of Proposition \ref{x1}]
First we assume that $M(\gamma)N_u(\gamma)\ne0$.
For given $\gamma=
a_{r_0}n_{w_0}
k(\gamma)\in\Lambda_u$ and $u\in\mathbb{R}^{n-1}$ there exist (unique)
$a_r\in A$ and $\gamma_1=a_{r_1}\cdot{\rm diag}(1,1,k_1,k_2)\in\Gamma_{00}$ with $r\in (1,\ell_0]$ and 
$k_1\in O(n-1)$, $k_2\in
O(d-n)$, such that
$\gamma n_u a_{rr_1}{\cdot}o=a_tn_v{\cdot}o$ where 
$|v|_{\geqslant
n}^2+1=\delta_u(\gamma)$. Here,
$rr_1$ is the unique positive solution of the equation
$\sqrt{M(\gamma)}\,x-\frac{\sqrt{N_u(\gamma)}}{x}=0$ (see Sect.\,\ref{et} for the meaning of the notations). Thus,  modulo
$\left\{\ell_{0}^n\,\big|\,n\in\mathbb{Z}\right\}$ (multiplicatively), $r$ and $r_1$ (then, $a_r$ and $\gamma_1$) are unique.
Clearly we have: $\gamma n_u a_{rr_1}{\cdot}
o=\gamma\gamma_1 a_r n_{\left(rr_1\right)^{-1}u
k_1}{\cdot}o$ 
(here we have used (\ref{commu}), (\ref{commu2})). For  
$v=(v_1,v_2,\cdots,v_{d-1})\in
\mathbb{R}^{d-1}$, denote 
$$v_{<n}=(v_1,\,\cdots,\,v_{n-1},\,0,\,\cdots,\,0)\quad{\rm and}\quad v_{\geqslant
n}=(0,\,\cdots,\,0,\,v_n,\,\cdots,\,v_{d-1}).$$
Let $G_0^{\ast}$ denote a fundamental domain of $\Gamma_0\backslash G_0$ in $G_0$ which contains $\Gamma_0{\cdot}e$.
Then there exists (unique)
$\gamma_2\in\Gamma_0$ such that $\gamma_2a_tn_v{\cdot}o
=\gamma_2a_tn_{v_{<n}}\cdot
n_{\geqslant n}{\cdot}o$ such that
$\gamma_2a_tn_{v_{<n}}$ lies in $G_0^{\ast}$. Define
$$\mathbb{R}^{d-1}_n :=\left\{
v_{\geqslant n}\,\big|\,v\in\mathbb{R}^{d-1}\right\}$$ and
$$\Omega_u(x):=\left\{
g_0n_uk\in G\,\big|\,
g_0\in G_0^{\ast},\,\, u\in\mathbb{R}^{n-1}_n 
,\,\,|u|^2+1\leqslant x,\,\,k\in K\right\}\subset G.$$
 Denote by $\gamma^{\ast}(u)$ the element  $\gamma\gamma_1 a_r n_{\left(rr_1\right)^{-1}u
k_1}{\cdot}
o\in\Omega_u(x)$.
When $M(\gamma)N_u(\gamma)=0$, the existence of $\gamma^{\ast}(u)$ in $\Omega_u(x+\varepsilon_0)$ is clear in view of the above argument: if $M(\gamma)=0$, one chooses $\gamma_1=\gamma_0^{\ell}$ for $\ell>0$ large enough; if $N_u(\gamma)=0$, one chooses $\gamma_1=\gamma_0^{\ell}$ for $\ell<0$ and $|\ell|$ large enough. Here $\varepsilon_0$ is a positive, small enough number.
\begin{lem}\label{cl}
$\gamma^{\ast}(u)\ne
\eta^{\ast}(w)$ for any $u$, $w\in\mathbb{R}^{n-1}$ and $\gamma$, $\eta$ of different classes in $\Gamma_0\backslash
\Gamma/\Gamma_0$. \end{lem}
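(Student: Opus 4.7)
The plan is to argue by contraposition: assuming $\gamma^{\ast}(u)=\eta^{\ast}(w)$ in $G/K$, I will deduce that $\Gamma_0\gamma\Gamma_0=\Gamma_0\eta\Gamma_0$. First, I unwind the definition. Using the commutation relations (\ref{commu}), (\ref{commu2}) and the $K$-invariance of $o$, one checks that $\gamma\gamma_1 a_r n_{(rr_1)^{-1}uk_1}\cdot o=\gamma n_u a_{rr_1}\cdot o$, and by construction this equals $a_tn_v\cdot o=\gamma_2^{-1}g_0 n_{v_{\geqslant n}}\cdot o$ where $g_0:=\gamma_2 a_tn_{v_{<n}}\in G_0^{\ast}$ and $\gamma_2\in\Gamma_0$. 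Writing the analogous decomposition $\eta^{\ast}(w)=\eta_2^{-1}g_0'n_{v'_{\geqslant n}}\cdot o$ and left-multiplying by $\eta_2$, the hypothesis becomes $\eta_2\gamma_2^{-1}g_0 n_{v_{\geqslant n}}\cdot o=g_0'n_{v'_{\geqslant n}}\cdot o$ in $G/K$.

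Second, I exploit the fundamental-domain property of $G_0^{\ast}$. Writing $\eta_2\gamma_2^{-1}g_0=\gamma_3\cdot g_0''$ with $\gamma_3\in\Gamma_0$ and $g_0''\in G_0^{\ast}$, the equation reads $\gamma_3\cdot(g_0''n_{v_{\geqslant n}})\cdot o=g_0'n_{v'_{\geqslant n}}\cdot o$. Both sides (ignoring $\gamma_3$) represent points of $\Omega\cdot o$ for the normal-form set $\Omega:=G_0^{\ast}\cdot\{n_v\,:\,v\in\mathbb{R}^{d-1}_n\}$, which by design embeds injectively into $\Gamma_0\backslash G/K$; consequently $\gamma_3=e$, and uniqueness of representatives in $G_0^{\ast}$ forces $\eta_2=\gamma_2$ and $g_0''=g_0$. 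Then $g_0n_{v_{\geqslant n}}\cdot o=g_0'n_{v'_{\geqslant n}}\cdot o$; setting $h:=(g_0')^{-1}g_0\in G_0$, we get $n_{-v'_{\geqslant n}}h\,n_{v_{\geqslant n}}\in K$, and a direct matrix computation exploiting the block form $G_0=\{\mathrm{diag}(\tau_1,\tau_2)\,|\,\tau_1\in O(1,n),\,\tau_2\in O(d-n)\}$ together with the explicit form of $n_v$ forces $h\in K_0$ and pins down $v'_{\geqslant n}$ as the image of $v_{\geqslant n}$ under the rotation given by the $\tau_2$-block of $h$.

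Third, tracing these identifications back to the original equation, we obtain that $\eta_1^{-1}\eta^{-1}\gamma\gamma_1$ sits inside a specific subset of the form $(AN)\cdot K\cdot(AN)^{-1}$, with the $AN$-factors fully determined by the parameters. Writing this element in $NAK$-form and invoking the discreteness of $\Gamma$ together with the torsion-free uniform hypothesis (which excludes parabolic elements, giving $\Gamma\cap N=\{e\}$), an argument analogous to Lemma \ref{ANM} places $\eta_1^{-1}\eta^{-1}\gamma\gamma_1$ inside $\Gamma\cap AM=\Gamma_{00}\subset\Gamma_0$. Therefore $\gamma\in\Gamma_0\eta\Gamma_0$, contradicting the hypothesis.

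The main obstacle is the matrix computation in the second paragraph: one must carefully exclude the possibility that $h\in G_0$ has a nontrivial $A$- or $N_0$-component while $n_{-v'_{\geqslant n}}h\,n_{v_{\geqslant n}}$ still lies in the compact group $K$. A subsidiary delicate point is controlling $\Gamma\cap AN$ in the final step, where the torsion-free uniform hypothesis is essential.
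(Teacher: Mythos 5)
Your proposal takes a genuinely different route from the paper, and it has a real gap at a crucial step.

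The paper's own proof is a short algebraic computation: it writes $\gamma^{\ast}(u)=\gamma_2\gamma\gamma_1\cdot a_r n_{\alpha}$ with $\gamma_2\in\Gamma_0$, $\gamma_1\in\Gamma_{00}$, and $a_rn_\alpha\in AN_0$ (the key point being that $\alpha=(rr_1)^{-1}uk_1\in\mathbb{R}^{n-1}$ so $n_\alpha\in N_0$), and similarly $\eta^{\ast}(w)=\eta_2\eta\eta_1\cdot a_\ell n_\beta$. If these are equal, then
$\gamma_1^{-1}\gamma^{-1}\gamma_2^{-1}\eta_2\eta\eta_1 = a_r n_\alpha n_{\beta}^{-1}a_{\ell}^{-1}\in AN_0$,
and since $AN_0\subset G_0$ we get $\gamma_1^{-1}\gamma^{-1}\gamma_2^{-1}\eta_2\eta\eta_1\in\Gamma\cap AN_0\subset\Gamma\cap G_0=\Gamma_0$, hence $\tilde{\gamma}=\tilde{\eta}$. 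No fundamental-domain input is needed at all; the argument never touches $G_0^{\ast}$ or the normal form $g_0 n_{v_{\geqslant n}}$.

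Your argument, by contrast, tries to exploit the $G_0^{\ast}\cdot\{n_v:v\in\mathbb{R}^{d-1}_n\}$ normal form and the fundamental-domain property. The step that breaks down is the assertion that $\Omega\cdot o$ ``by design embeds injectively into $\Gamma_0\backslash G/K$, consequently $\gamma_3=e$.'' This is not a design feature; it is a nontrivial geometric claim, and it is the very kind of statement that Lemma \ref{cl} and Lemma \ref{hi} are intended to supply, so you cannot assume it here. Concretely, $g_0n_v\cdot o$ (with $g_0=n_wa_s\in G_0$, $v\in\mathbb{R}^{d-1}_n$) has nearest point $g_0a_{\sqrt{1+|v|^2}}\cdot o$ in $G_0/K_0$, \emph{not} $g_0\cdot o$; projecting the hypothetical equality to $G_0/K_0$ therefore gives $\gamma_3 g_0 a\cdot o=g_0' a\cdot o$ with $a=a_{\sqrt{1+|v|^2}}$, and since $G_0^{\ast}a$ need not be a fundamental domain again (and certainly $g_0a\cdot o$ need not lie in the interior of $\Gamma_0\backslash G_0/K_0$), one cannot conclude $\gamma_3=e$. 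The rest of your argument inherits this gap. Your third paragraph is also much vaguer than it should be: after your reductions you only know that $\eta_1^{-1}\eta^{-1}\gamma\gamma_1$ lies in a set of the form $(AN)\,K\,(AN)^{-1}$, which is considerably larger than $ANM$, so the appeal to ``an argument analogous to Lemma \ref{ANM}'' does not follow without substantial further work; in the paper's version the element is manifestly in $AN_0$, so the conclusion is immediate and Lemma \ref{ANM} is not used at all. Finally, you flag the matrix computation as the main obstacle, but the real obstacle is the unproven injectivity claim, which your writeup does not address.

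Suggestion: replace the fundamental-domain/projection strategy with the paper's observation that $\gamma^{\ast}(u)$ and $\eta^{\ast}(w)$ differ from $\gamma_2\gamma\gamma_1$, $\eta_2\eta\eta_1$ only by right-multiplication by elements of $AN_0$; then $\Gamma\cap AN_0\subset\Gamma_0$ finishes the proof in one line.
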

\begin{proof}
Like $\gamma^{\ast}(u)$, we have:
$\eta^{\ast}(w)=
\eta_2\eta\eta_1
a_{\ell}
n_{(\ell\ell_1)^{-1}
w\tau_1}\in
\Omega^{\ast}_x$ for some $\ell\in(1,\ell_0]$,  $\eta_2\in\Gamma_0$ and
$\eta_1=a_{\ell_1}
k_1^{\prime}\in\Gamma_{00}$ where
$k_1^{\prime}=\textup{diag}(1,\,1,\,\tau_1,\tau_2)\in M_0$. If
$\gamma^{\ast}(u)=\eta^{\ast}(w)$, then 
$$a_rn_{(rr_1)^{-1}uk_1}
\cdot
n_{-(\ell\ell_1)^{-1}w\tau_1}\,
a_{\ell^{-1}}=\gamma_1^{-1}
\gamma^{-1}\gamma_2^{-1}
\cdot
\eta_2\eta\eta_1
\in\Gamma\cap
AN_0.$$
It is clear that $AN_0\cap\Gamma\subset \Gamma_0$. Hence,
$\gamma_1^{-1}
\gamma^{-1}\gamma_2^{-1}
\cdot
\eta_2\eta\eta_1\in\Gamma_0$ which
implies that $\gamma$ and $\eta$ are of the same class in
$\Gamma_0\backslash\Gamma/\Gamma_0$, contradicting our
assumption.
\end{proof}
This lemma tells us that, those 
$\gamma^{\ast}(u)\in\Omega_u(x)$ are distinguishable with respect to $\gamma$ (of different classes) and $u$.
A further property is about the discreteness of $\gamma^{\ast}(u)$:
\begin{lem}\label{hi}
For any sequence of pairs
$$\Big\{\big(\gamma_{i1}^{\ast}(u_i),\gamma_{i2}^{\ast}(w_i)\,\big)\,\Big|\,\gamma_{i1},\,\gamma_{i2}\in\Gamma
\smallsetminus\Gamma_0,\,\tilde{\gamma}_{i1}\ne
\tilde{\gamma}_{i2},\,\gamma_{i1}^{\ast}(u_i)\in\Omega_{u_i}(\infty),\,\gamma_{i2}^{\ast}(w_i)\in\Omega_{w_i}(\infty),\,u_i,\,w_i\in\mathbb{R}^{n-1}
\Big\}_{i=1}^{\infty},$$
$\gamma_{i1}^{\ast}(u_i)$ and $\gamma_{i2}^{\ast}(w_i)$ can not be close
enough (as $i\rightarrow\infty$) with respect to the topology of
$G$.
\end{lem}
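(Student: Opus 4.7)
The idea is to argue by contradiction: assume along a subsequence that $\alpha_i:=\gamma_{i1}^{\ast}(u_i)$ and $\beta_i:=\gamma_{i2}^{\ast}(w_i)$ satisfy $\alpha_i\beta_i^{-1}\to e$ in $G$ while $\tilde\gamma_{i1}\ne\tilde\gamma_{i2}$. From the construction preceding Lemma \ref{cl} I will factor $\alpha_i=\tau_i^{(1)}h_i^{(1)}$ and $\beta_i=\tau_i^{(2)}h_i^{(2)}$, where $\tau_i^{(j)}=\gamma_{ij,2}\gamma_{ij}\gamma_{ij,1}\in\Gamma$ with $\gamma_{ij,1}\in\Gamma_{00}\subset\Gamma_0$ and $\gamma_{ij,2}\in\Gamma_0$, and $h_i^{(j)}\in AN_0\subset G_0$. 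The plan will be to reduce the problem to the discreteness of $\Gamma_0\backslash\Gamma$ in $\Gamma_0\backslash G$ furnished by Lemma \ref{5a}.

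\textbf{Reducing to a discrete lattice.} Setting $\rho_i:=(\tau_i^{(2)})^{-1}\tau_i^{(1)}\in\Gamma$, the relation $\beta_i^{-1}\alpha_i=(h_i^{(2)})^{-1}\rho_i h_i^{(1)}\to e$ will give $\rho_i=h_i^{(2)}\epsilon_i(h_i^{(1)})^{-1}$ for some $\epsilon_i\to e$ in $G$. Projecting to $\Gamma_0\backslash G$ and observing that $\Gamma_0\backslash G_0$ is compact (by uniformity of $\Gamma_0$ in $G_0$) and embeds as a closed subset of the complete Riemannian manifold $\Gamma_0\backslash G$, the class $\Gamma_0\rho_i=\Gamma_0 h_i^{(2)}\epsilon_i(h_i^{(1)})^{-1}$ will be a small perturbation of $\Gamma_0 h_i^{(2)}(h_i^{(1)})^{-1}\in\Gamma_0\backslash G_0$, hence eventually confined to a compact subset of $\Gamma_0\backslash G$.

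\textbf{Invoking Lemma \ref{5a}.} By Lemma \ref{5a}, $\Gamma_0\backslash\Gamma$ embeds discretely into $\Gamma_0\backslash G$ without accumulation, so its intersection with any compact subset is finite. Passing to a further subsequence, one may assume $\Gamma_0\rho_i=\Gamma_0\rho$ is constant with some $\rho\in\Gamma$. As $\Gamma_0\rho$ sits arbitrarily close to the closed compact set $\Gamma_0\backslash G_0$, it must belong to it, giving $\rho\in G_0\cap\Gamma=\Gamma_0$ and hence $\rho_i\in\Gamma_0$ for $i$ large. Unwinding $\rho_i=(\tau_i^{(2)})^{-1}\tau_i^{(1)}$ with $\gamma_{ij,1}\in\Gamma_{00}\subset\Gamma_0$ and $\gamma_{ij,2}\in\Gamma_0$, this will translate into $\gamma_{i1}\in\Gamma_0\gamma_{i2}\Gamma_0$, i.e.\ $\tilde\gamma_{i1}=\tilde\gamma_{i2}$, the desired contradiction.

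\textbf{Anticipated obstacle.} The principal technical difficulty will be verifying that $\Gamma_0\rho_i$ really does stay in a compact region of $\Gamma_0\backslash G$. While $\Gamma_0 h_i^{(2)}(h_i^{(1)})^{-1}$ lies automatically in the compact $\Gamma_0\backslash G_0$, the perturbation by $\epsilon_i$ must be moved past $h_i^{(1)}$, producing the conjugate $h_i^{(1)}\epsilon_i(h_i^{(1)})^{-1}$, which under a left-invariant metric on $G$ may blow up when $h_i^{(1)}$ is unbounded. Overcoming this will require exploiting that $\alpha_i,\beta_i\in\Omega_{u_i}(\infty)=G_0^{\ast}\cdot N(\mathbb{R}^{d-1}_n)\cdot K$ with $G_0^{\ast}$ compact: closeness of $\alpha_i$ and $\beta_i$ in this decomposition will force the transverse $N(\mathbb{R}^{d-1}_n)$-components to be comparable, thereby constraining $h_i^{(1)}$ and $h_i^{(2)}$ to combine so that the conjugate stays tame. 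This cross-comparison is the principal technical step to verify.
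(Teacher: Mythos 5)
Your proposal follows the paper's route essentially step for step: write $\gamma_{ij}^{\ast}=\tau_i^{(j)}h_i^{(j)}$ with $\tau_i^{(j)}\in\Gamma$ (a product $\gamma'\gamma\gamma''$ with $\gamma',\gamma''\in\Gamma_0$) and $h_i^{(j)}\in AN_0$, form the $\Gamma$-element $\eta_i=(\tau_i^{(1)})^{-1}\tau_i^{(2)}$ (your $\rho_i$ is just its inverse), observe $\eta_i=h_i^{(1)}\epsilon_i^{-1}(h_i^{(2)})^{-1}$ with $\epsilon_i\to e$, push to $\Gamma_0\backslash G$, invoke Lemma \ref{5a}, and unwind to get $\tilde\gamma_{i1}=\tilde\gamma_{i2}$. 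The obstacle you flag — that the conjugate $h_i^{(1)}\epsilon_i(h_i^{(1)})^{-1}$ may blow up if the $N_0$-component of $h_i^{(1)}$ is unbounded — is real, and the paper handles it quite tersely (it asserts $V_i:=h_i^{(1)}U_i(h_i^{(2)})^{-1}\subset AN_0V$ for a fixed compact $V$, then appeals to compactness of $\Gamma_0\backslash G_0$ to conclude the image of $V_i$ in $\Gamma_0\backslash G$ sits in a compact set).

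Where you diverge from the paper is in the proposed fix. You suggest controlling things through the transverse $N(\mathbb{R}^{d-1}_n)$-components coming from the $\Omega_{u}(\infty)=G_0^{\ast}\cdot N(\mathbb{R}^{d-1}_n)\cdot K$ decomposition, but the problematic conjugation lives in the $N_0$-direction (inside $G_0$), which the transverse decomposition does not see. The paper's intended resolution is different: after projecting $\eta_i$ to $\Gamma_0\backslash G$, the entire $AN_0$-factor $h_i^{(j)}$ can be reduced modulo $\Gamma_0$ into the compact set $G_0^{\ast}$ (precisely because $\Gamma_0$ is uniform in $G_0$), so the conjugating element becomes bounded and $\Gamma_0\eta_i$ is trapped near the compact subset $\Gamma_0\backslash G_0\subset\Gamma_0\backslash G$. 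You should redirect your cross-comparison step to exploit compactness of $\Gamma_0\backslash G_0$ on the $AN_0$-side rather than the transverse $N$-side; with that correction your argument matches the paper's, and your final observation that $\Gamma_0\rho$ must land in the closed set $\Gamma_0\backslash G_0$ (hence $\rho\in\Gamma\cap G_0=\Gamma_0$) is a useful clarification that the paper leaves implicit.
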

\begin{proof}
Let
$\gamma_{i1}^{\ast}(u_i)=
\gamma_{i1}^{\prime}
\gamma_{i1}
\gamma_{i1}^{\prime
\prime}a_{r_{i1}}
n_{u_{i1}}\in
\Omega_{u_i}(\infty)$
and
$\gamma_{i2}^{\ast}(w_i)=
\gamma_{i2}^{\prime}
\gamma_{i2}
\gamma_{i2}^{\prime
\prime}a_{r_{i2}}
n_{u_{i2}}\in
\Omega_{w_i}(\infty)$
for some $\gamma_{i1}^{\prime}$,
$\gamma_{i2}^{\prime}\in\Gamma_0$, $\gamma_{i1}^{\prime\prime}$,
$\gamma_{i2}^{\prime\prime}\in\Gamma_{00}$, $r_{i1}$,
$r_{i2}\in(1,\,\ell_0]$ and $u_{i1}$, $u_{i2}\in\mathbb{R}^{n-1} $.
Assume that $\gamma_{i1}^{\ast}(u_i)$ and $\gamma_{i2}^{\ast}(w_i)$ are close
enough as $i\rightarrow\infty$, then
$\left(\gamma_{i1}^{\ast}(u_i)\right)^{-1}\gamma_{i2}^{\ast}(w_i)\rightarrow1$, that is,
$n_{-u_{i1}}a_{r_{i1}^{-1}}
\left(
\gamma_{i1}^{\prime}
\gamma_{i1}
\gamma_{i1}^{\prime
\prime}\right)^{-1}
\left(
\gamma_{i2}^{\prime}
\gamma_{i2}
\gamma_{i2}^{\prime
\prime}\right)
a_{r_{i2}}n_{u_{i2}}\in
U_i$
where $U_i$ is a compact neighborhood of $e$ that can be small enough for
large $i$. 
It follows that 
$\eta_i:=\left(
\gamma_{i1}^{\prime}
\gamma_{i1}
\gamma_{i1}^{\prime
\prime}\right)^{-1}
\left(
\gamma_{i2}^{\prime}
\gamma_{i2}
\gamma_{i2}^{\prime
\prime}\right)$ lies in $V_i:=a_{r_{i1}}
n_{u_{i1}}\,
U_i\,
n_{-u_{i2}}a_{r_{i2}^{-1}}$,
a compact neighborhood of  $a_{r_{i1}}n_{(u_{i1}-u_{i2})}a_{r^{-1}_{i2}}
\in AN_0$ which is contained in $AN_0V$ where $V$ is a fixed compact neighborhood of $e$ (note that $V_i$ is small enough for large $i$). Since $\Gamma_0\backslash G_0$ is compact,  the image of 
$V_i$ in $\Gamma_0\backslash G$ is also compact. This implies that,
passing to a subsequence if necessary we may assume that $\Gamma_0{\cdot}\eta_i$ converges in $\Gamma_0\backslash G$. 
By Lemma \ref{5a}, the sequence $\{\Gamma_0{\cdot}\eta_i\}$ becomes stable for large enough $i$. Hence, $\tilde{\gamma}_i=
\tilde{\gamma}_j$ for large $i$ and $j$, a contradiction.
\end{proof}
By Lemma \ref{cl}, to
count $\pi_u(x)$ it suffices to count the representative elements $\gamma_i^{\ast}(u)$ that lie in $\Omega_u(x)$.  Lemma \ref{hi} tells us that these $\gamma_i^{\ast}(u)$ are discrete and have no accumulation point with respect to the topology of $G$.  The topology of $G$, when restricted to 
$\Omega_u(x)$ is equivalent to the Euclidean topology of $\mathbb{R}^{d-n}$ since the components $G_0^{\ast}$ and $K$ of $\Omega_u(x)$ are compact. Thus, $\pi_u(x)$ is (upper) bounded by the volume
of $\Omega_u(x)$ which is of order $x^{(d-n)/2}$. This 
proves Proposition \ref{x1}.
\end{proof}
\begin{cor}\label{uag}
Assume that
$|\,\Gamma_0\backslash\Gamma/\Gamma_0|=\infty$, then for any fixed $u\in\mathbb{R}^{n-1}$
the unique accumulation point of $\{\delta_u(\gamma_i)\,|\,\tilde{\gamma}_i\ne
\tilde{\gamma}_j\}$ is $\infty$.
\end{cor}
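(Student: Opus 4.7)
The plan is to obtain this corollary as an immediate consequence of Proposition \ref{x1}, without any additional geometric or group-theoretic input. First I would fix a representative system $\Lambda_u = \{\gamma_i\}_{i=1}^{\infty}$ of the nontrivial classes in $\Gamma_0 \backslash \Gamma / \Gamma_0 \smallsetminus \{\tilde{1}\}$; under the assumption $|\Gamma_0\backslash\Gamma/\Gamma_0| = \infty$, this sequence is genuinely infinite.

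Next, for any fixed $x > 0$, Proposition \ref{x1} supplies the quantitative count
\[
\#\{\gamma \in \Lambda_u : \delta_u(\gamma) \leq x\} \leq c_3\, x^{(d-n)/2},
\]
which is finite. Rephrased, every bounded interval $[0,L]$ contains only finitely many values from the sequence $\{\delta_u(\gamma_i)\}$. This immediately excludes any finite number from being an accumulation point: if some $L \in \mathbb{R}_{\geq 0}$ were an accumulation point, then arbitrarily small neighborhoods $(L-\varepsilon, L+\varepsilon)$ would have to contain infinitely many $\delta_u(\gamma_i)$, but they all lie inside $[0, L+1]$, which has only finitely many.

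On the other hand, since $\{\gamma_i\}$ is infinite and the values $\delta_u(\gamma_i)$ are bounded below but cannot accumulate anywhere in $\mathbb{R}_{\geq 0}$, they must escape to infinity; equivalently, $\delta_u(\gamma_i) \to \infty$ after any ordering in which $\delta_u(\gamma_i)$ is nondecreasing. Hence $\infty$ is an accumulation point, and by the previous paragraph it is the only one. There is no real obstacle to this argument — the entire difficulty was absorbed into the proof of Proposition \ref{x1}, and the corollary is essentially a repackaging of that lattice-counting bound into the language of accumulation points, which is the form used in the error-term estimate of Section \ref{et}.
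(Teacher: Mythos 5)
Your argument is correct and is exactly the intended reading: the paper states Corollary~\ref{uag} immediately after Proposition~\ref{x1} with no separate proof, precisely because the bound $\pi_{\Lambda_u}(x)\leqslant c_3\,x^{(d-n)/2}$ makes every ray $[0,x]$ contain only finitely many values $\delta_u(\gamma_i)$, forcing the infinite sequence to have $\infty$ as its sole accumulation point. No gap here.
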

\begin{lem}\label{c2}
If $M(\gamma)N_u(\gamma)=0$, then $Q_u(\gamma)=\delta_u
(\gamma)=1$.
\end{lem}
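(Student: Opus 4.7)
The plan is to argue directly from the explicit formulas for $M(\gamma)$, $N_u(\gamma)$ and $Q_u(\gamma)$ recorded in (\ref{MNQ}). Since $M(\gamma)$ and $N_u(\gamma)$ are sums of squares of real numbers ($M=\sum_{i=n}^{d-1}m_i^2$ and $N_u=\sum_{i=n}^{d-1}n_i^2$), the hypothesis $M(\gamma)N_u(\gamma)=0$ splits into two cases according to which factor vanishes, and each factor vanishes precisely when every $m_i$ (resp.\ every $n_i$) in the range $n\leqslant i\leqslant d-1$ vanishes.

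First I would handle the case $M(\gamma)=0$. Then $m_i=0$ for all $n\leqslant i\leqslant d-1$, so the cross sum $\sum_{i=n}^{d-1}m_in_i$ is identically zero. Plugging into the definition $Q_u(\gamma)=1+2\sum_{i=n}^{d-1}m_in_i$ gives $Q_u(\gamma)=1$. Since $\sqrt{M(\gamma)N_u(\gamma)}=0$, the defining formula $\delta_u(\gamma)=2\sqrt{M(\gamma)N_u(\gamma)}+Q_u(\gamma)$ immediately yields $\delta_u(\gamma)=1$. Next I would treat the symmetric case $N_u(\gamma)=0$ in exactly the same way: all $n_i$ vanish, so the cross sum again vanishes, $Q_u(\gamma)=1$, and $\delta_u(\gamma)=1$. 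The cases are not mutually exclusive, but both yield the same conclusion, so no further bookkeeping is needed.

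There is really no obstacle here: the statement is a purely algebraic consequence of the definitions in (\ref{MNQ}), and the only observation one uses is that a sum of real squares vanishes iff each term does. One could alternatively verify the conclusion geometrically by noting that $f_\gamma(u,r)=M(\gamma)r^2+N_u(\gamma)r^{-2}+Q_u(\gamma)$ equals $|v_1/s_1|_{\geqslant n}^2+1$, so that when either coefficient $M(\gamma)$ or $N_u(\gamma)$ vanishes, the minimum of $f_\gamma(u,\cdot)$ over $r>0$ is $Q_u(\gamma)$, which by the interpretation (\ref{gmd}) must equal $1+\min|v_1/s_1|_{\geqslant n}^2\geqslant1$; but this route gives only the inequality $Q_u(\gamma)\geqslant 1$, whereas the direct algebraic argument above produces the sharp equality $Q_u(\gamma)=1$ without appeal to any geometric interpretation.
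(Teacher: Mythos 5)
Your proof is correct and is essentially the argument the paper leaves implicit: the paper's entire proof is the one-liner ``This is clear in view of (\ref{MNQ}),'' and you have simply written out that observation — a real sum of squares vanishes iff each term does, so the vanishing of $M(\gamma)$ or $N_u(\gamma)$ kills every $m_i$ or every $n_i$, hence kills the cross sum in $Q_u(\gamma)$ and the square-root term in $\delta_u(\gamma)$.
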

\begin{proof}
This is clear in view of (\ref{MNQ}).
\end{proof}
\begin{lem}\label{mnz}
For any $\gamma\notin\Gamma_0$ and $u\in\mathbb{R}^{n-1} $,
$M(\gamma)$ and $N_u(\gamma)$ can not be zero simultaneously.
\end{lem}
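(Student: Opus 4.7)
My plan is to argue by contradiction, exploiting the geometric meaning of the condition $M(\gamma)=N_u(\gamma)=0$ together with the discreteness of $\Gamma$. Suppose $M(\gamma)=N_u(\gamma)=0$ for some $\gamma\in\Gamma\smallsetminus\Gamma_0$ and $u\in\mathbb{R}^{n-1}$. By Lemma \ref{c2} one has $Q_u(\gamma)=1$, so
$$f_\gamma(u,r)=M(\gamma)r^2+N_u(\gamma)r^{-2}+Q_u(\gamma)=1\qquad\textup{for every }r>0.$$
In view of the geometric interpretation recorded in Sect.\,\ref{et}, namely that $\sqrt{f_\gamma(u,r)}=\cosh\bigl({\rm d}_{G/K}(\gamma n_u a_r{\cdot}o,\,G_0/K_0)\bigr)$, this identity forces the whole geodesic $\gamma n_u A{\cdot}o$ to lie inside the totally geodesic submanifold $G_0/K_0\subset G/K$.

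Since $n_u\in N_0\subset G_0$ and $A\subset G_0$, the geodesic $n_uA{\cdot}o$ itself lies in $G_0/K_0$. Thus $\gamma$, acting as an isometry of $G/K$, carries the geodesic $n_uA{\cdot}o\subset G_0/K_0$ onto another geodesic $\gamma n_uA{\cdot}o\subset G_0/K_0$. Writing $\gamma n_u=g_0 k$ with $g_0\in G_0$, $k\in K$ (possible because $\gamma n_u{\cdot}o\in G_0/K_0$), the totally geodesic property translates into ${\rm Ad}(k)E\in\mathfrak{p}_0$, which by the explicit formula $kEk^{-1}=(u_0)_{11}E+\tfrac12\sum_{i=1}^{d-1}(u_0)_{i+1,1}(E_i-\theta E_i)$ is equivalent to the first-column condition $(u_0)_{j,1}=0$ for $j\geqslant n+1$. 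Consequently $kAk^{-1}\subset G_0$, and therefore the one-parameter subgroup $\gamma n_uAn_u^{-1}\gamma^{-1}=g_0(kAk^{-1})g_0^{-1}$ is an $\mathbb{R}$-split torus contained in $G_0$.

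The final step is to upgrade this structural information to $\gamma\in\Gamma_0$, which gives the contradiction. I would follow the strategy used in the proof of Lemma \ref{ANM}: using that $\Gamma_{00}=\langle a_{\ell_0}k_0\rangle$ and the commutativity relations (\ref{commu})--(\ref{commu3}), multiply $\gamma$ on the left and right by suitable elements of $\Gamma_{00}\subset AM_0$ to bring the data $(r_0,w_0,u_0)$ into a compact region, so that the modified element $\gamma_2\gamma\gamma_1$ lies in an arbitrarily small neighborhood of $G_0$ in $G$. Combining the derived algebraic constraints from $m_i=0$ (namely $w_{0i}(1-u_{11})+u_{i+1,1}=0$ for $n\leqslant i\leqslant d-1$), the corresponding ones from $n_i=0$, and the strict inequality $|u_{11}(\gamma)|<1$ from Proposition \ref{u111}, one obtains a compact neighborhood $U$ of $G_0$ such that $\gamma_2\gamma\gamma_1\in U$. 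Discreteness of $\Gamma$ in $G$ then places $\gamma_2\gamma\gamma_1\in\Gamma\cap G_0=\Gamma_0$, whence $\gamma\in\Gamma_0$, contradicting the hypothesis.

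The main obstacle is the last paragraph: showing that the algebraic constraints imposed by $M(\gamma)=N_u(\gamma)=0$ really do suffice to approximate $\gamma$ to $G_0$ (not merely to $G_0K$, which is weaker). The key is to use the two sets of equations simultaneously and to exploit the explicit block structure of $k(\gamma)=\textup{diag}(1,u_0)$: the $m_i=0$ equations control the first column of $u_0$, while the $n_i=0$ equations, coupled with varying $\gamma_1\in\Gamma_{00}$ as in Lemma \ref{ANM}, pin down the remaining off-block entries. This is the delicate step where one must carefully track how the Iwasawa data transform under the left-right modification.
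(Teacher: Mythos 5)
Your first step reproduces the paper's argument exactly: from $M(\gamma)=N_u(\gamma)=0$ and Lemma \ref{c2} one gets $f_\gamma(u,r)\equiv 1$, and via the geometric interpretation (\ref{gmd}) this forces $\gamma n_uA\cdot o\subset G_0/K_0$. But here you part ways with the paper, and the divergence is where the gap lies. The paper finishes in one line: since $n_uA\cdot o\subset G_0/K_0$ as well, and $\gamma\in\Gamma$, the translates $\gamma(G_0/K_0)$ and $G_0/K_0$ meet, so $\gamma$ stabilizes $G_0/K_0$ and therefore $\gamma\in\Gamma_0$. This uses nothing beyond the fact (implicit in $Y$ being an embedded geodesic cycle) that the $\Gamma$-translates of the totally geodesic submanifold $G_0/K_0$ are either equal or disjoint, with $\Gamma_0$ being the stabilizer of $G_0/K_0$ in $\Gamma$.

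Your proposal instead tries to pin $\gamma$ down to $G_0$ by extracting Iwasawa coordinate constraints (the ${\rm Ad}(k)E\in\mathfrak p_0$ computation, which is correct, and the vanishing of $(u_0)_{j,1}$ for $j\geqslant n+1$) and then running a discreteness argument modeled on Lemma \ref{ANM}. This is where the argument breaks down, and you candidly flag it yourself: you only \emph{describe} the plan (``I would follow the strategy\ldots'', ``The key is to use\ldots'', ``This is the delicate step where one must carefully track\ldots'') but do not execute the crucial reduction. The algebraic consequences you do obtain (e.g.\ $\gamma n_u A n_u^{-1}\gamma^{-1}\subset G_0$) are genuine but too weak: a nontrivial element can normalize a split torus into $G_0$ without lying in $G_0$, and you have not supplied the additional constraints or the compactness/discreteness argument needed to exclude this. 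In short, the proposal identifies the right geometric consequence but misses that this consequence already yields $\gamma\in\Gamma_0$ directly, and the substitute route it opens up is left unfinished.
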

\begin{proof}
Assume that $M(\gamma)=N_u(\gamma)=0$ for some
$\gamma\notin\Gamma_0$ and $u\in\mathbb{R}^{n-1} $. As before, let
$\gamma n_ua_r=a_{r_0s}
n_{\frac{w_0+w}{s}}k$ (see Sect.\,\ref{et}). Then
$\left|\frac{w_0+w}{s}\right|^2_{\geqslant
n}+1=M(\gamma)r^2+\frac{N_u(\gamma)}{r^2}+Q_u(\gamma)\equiv1$ for any
$r>0$ (see Lemma \ref{c2}). This means that (see (\ref{gmd}))
$$\gamma n_u A{\cdot}o\subset G_0/K_0.$$
As $n_uA{\cdot}o\subset G_0/K_0$, we have $\gamma\in\Gamma_0$, a contradiction.
\end{proof}
\begin{proof}[Proof of Proposition \ref{x2}]
As before (see Sect.\,\ref{et}), 
let $w=n_ua_r{\cdot}o$, $z=n_va_t{\cdot}o\in G_0/K_0$, $\gamma=a_{r_0}
n_{w_0}k(\gamma)$. 
Write $k(\gamma)n_u
a_r{\cdot}o=n_w
a_s{\cdot}o$. Then
$\gamma n_ua_r{\cdot}o=n_{v_1}a_{s_1}
{\cdot}o$ where $v_1=(w_0+w)r_0$, $s_1=r_0s$.

First we show that, for each class $\tilde{\gamma}\ne\tilde{1}$ there exists representative element $\gamma$ such that $M(\gamma)N_u(\gamma)\ne 0$ for any $u\in\mathbb{R}^{n-1}$.
Recall that $\frac{1+u_{11}}{2}+\beta=0$ has at most one solution $u(\gamma)$ of $u$ up to $\pm 1$. Multiplying $\gamma_1\in\Gamma_0$ to the 
left side of the geodesic $C_u:=n_uA{\cdot}o\subset G_0/K_0$ and denote the point $P\in\gamma_1C_u$ as $n_{u^{\prime}}a_{r^{\prime}}{\cdot}o$. Clearly, we can choose
proper $\gamma_1$ such that $u^{\prime}\ne u(\gamma)$ for any $P\in\gamma_1C_u$. Replacing $\gamma$ with $\gamma\gamma_1$ if necessary, we assume that $\frac{1+u_{11}}{2}+\beta\ne 0$. 
Note that the condition $n\geqslant 2$ is necessary in the above discussion because the submanifold $G_0/K_0$ should be large enough to allow inside the geodesic $\gamma_1C_u$ (and $\gamma_2C_u$ in below). If $n=1$, such property does not hold.
As $r\rightarrow\infty$, we have: $$s_1=\frac{r_0}{\frac{1-u_{11}}{2}\,r+\left(\frac{1+u_{11}}{2}+\beta\right)r^{-1}}
\rightarrow 0\quad{\rm and}\quad 
w_i=\frac{\frac{u_{i+1,\,1}}{2}\,r+\left(\alpha_i-\frac{u_{i+1,\,1}}{2}\right)r^{-1}}{\frac{1-u_{11}}{2}\,r+\left(\frac{1+u_{11}}{2}+\beta\right)r^{-1}}
\rightarrow\frac{u_{i+1,1}}{1-u_{11}}=:\tilde{w}_i.$$
Denote $\tilde{w}=(\tilde{w}_1,\cdots,\tilde{w}_{d-1})$. Then $\tilde{w}$ does not depend on $u$ (as $r\rightarrow\infty$). It follows that $\xi_{\gamma}:=
\lim_{r\rightarrow\infty}
\gamma n_ua_r{\cdot}o=
\lim_{r\rightarrow0^+}
n_{(w_0+\tilde{w})r_0}a_{r}{\cdot}o$ is a fixed point lying in the 
boundary $\mathcal{X}$ of $G/K$ where $\mathcal{X}:=\{\lim_{r\rightarrow 0^+}n_wa_r{\cdot}o\,|\,w\in\mathbb{R}^{d-1}\}$.
Note that $\xi_{\gamma}$ is independent of $u$. As $r\rightarrow 0$, we have $s_1\rightarrow0$ and 
$w_i\rightarrow\frac{\alpha_i-\frac{u_{i+1,1}}{2}}{\frac{1+u_{11}}{2}+\beta}=:\mathring{w}_i$. Denote $\mathring{w}=(\mathring{w}_1,\cdots,\mathring{w}_{d-1})$
and $\xi_{\gamma}(u)=\lim_{r\rightarrow0^+}
n_{(w_0+\mathring{w})}
a_r{\cdot}o\in\mathcal{X}$. The geodesic on $G/K$ is either a straight line or a semicircle, each of which is vertical to $\mathcal{X}$ (in the $\mathbb{R}^{d-1}
\times\mathbb{R}_+$ model). Our argument in above shows that $\gamma C_u$ is a semicircle with two end points 
$\xi_{\gamma}$,  $\xi_{\gamma}(u)\in\mathcal{X}$.
Multiply $\gamma_2\in\Gamma_0$ on the left side of the geodesic $C_u$ and assume that $\gamma_2 C_u$ is a semicircle on $G_0/K_0$ with two end points $P_1=\lim_{r\rightarrow 0^+}n_{u_1}a_r{\cdot}o$, $P_2=\lim_{r\rightarrow 0^+}n_{u_2}a_r{\cdot}o$ where $u_1$, $u_2\in\mathbb{R}^{n-1}$.
We can choose $\gamma_2$ such that the resulting  $u_1$, $u_2$ are large enough, then $u(\gamma)\ne v^{\prime}$ for any $n_{v^{\prime}}
a_{t^{\prime}}{\cdot}o\in\gamma_2C_u$. This implies that  $\frac{1+u_{11}}{2}+\beta\ne0$ for $\gamma\gamma_2$ ($u$ is the same with before).
Define $\mathcal{Y}:=\{\lim_{r\rightarrow 0^+}n_wa_r{\cdot}o\,|\,w\in\mathbb{R}^{n-1}\}$, the boundary of $G_0/K_0$. Then $\mathcal{Y}$ is a proper subset of $\mathcal{X}$.

\begin{itemize}
\item
If $M(\gamma)=0$, then $N_u(\gamma)\ne 0$ by Lemma \ref{mnz}. It follows from Lemma \ref{c2} 
that $f_{\gamma}(u,r)=N_u(\gamma)/r^2+1\rightarrow 1=\delta_u(\gamma)$.
The (minimal) distance between the geodesic $\gamma C_u$ and the submanifold $G_0/K_0$, which is $0$, is obtained at the two points $n_{v_1}a_{s_1}{\cdot}o\in\gamma C_u$
and $n_va_t{\cdot}o\in G_0/K_0$ where $v_1=(w_0+\tilde{w})r_0$, $s_1=0^+$, $v={\rm Pr}_{n-1}(v_1)$, $t=\sqrt{s_1^2+|v_1|^2_{\geqslant n}}$. Hence,  $\xi_{\gamma}$ lies in $\mathcal{Y}$, i.e., the last $(d-n)$ components of $(w_0+\tilde{w})$ vanish.  
We claim that $\gamma P_1$, $\gamma_2 P_2\notin\mathcal{Y}$. The reason is simple. Assume that $\gamma P_1\in\mathcal{Y}$, then the geodesic $\gamma n_{u_1}A{\cdot}o$ must lie in $G_0/K_0$ since the two end points $\gamma P_1$ and $\lim_{r\rightarrow\infty}n_{u_1}a_r{\cdot}o$ both lie in $\mathcal{Y}$. This implies that $\gamma\in\Gamma_0$, whereas we have assumed that $\gamma\notin\Gamma_0$. The case $\gamma P_2$ is shown in the fashion.
As a consequence, the geodesic $\gamma C_u$ either intersects $G_0/K_0$ or is away from $G_0/K_0$, i.e., $\delta_u(\gamma\gamma_2)>1$.
The first case indicates that 
the (minimal) distance between $\gamma\gamma_2C_u$ and the submanifold $G_0/K_0$ is achieved at a 
finite $r$, hence
$M(\gamma^{\prime})
N_u(\gamma^{\prime})\ne 0$ (otherwise the distance will be achieved at $r=0$ or $\infty$). By Lemma \ref{c2}, the second case directly indicates $M(\gamma
\gamma_2)
N_u(\gamma
\gamma_2)\ne 0$ since $\delta_u(\gamma
\gamma_2)>1$ in this situation. 
Replacing $\gamma$ with $\gamma\gamma_2$, we are done.
\item
If $N_u(\gamma)=0$ for some $u\in\mathbb{R}^{n-1}$, then $M(\gamma)\ne 0$ by Lemma \ref{mnz}, and $f_{\gamma}(u,r)\rightarrow 1=\delta_u(\gamma)$ as $r\rightarrow 0^+$. Due to the same reason for the case $M(\gamma)=0$, $\xi_{\gamma}(u)$ lies
in $\mathcal{Y}$, i.e., the last $(d-n)$ components of $(w_0+\mathring{w})$ vanish. Meanwhile, $\xi_{\gamma}$
does not lie in $\mathcal{Y}$, otherwise
we have $\gamma C_u\subset G_0/K_0$ which 
implies that  $\gamma\in\Gamma_0$.
As $\gamma_2$ varies, $u_1$ and $u_2$ can be arbitrarily large. If there exist $u_1$, $u_2$ such that $\gamma P_1$, $\gamma P_2$ do not lie in $\mathcal{Y}$, then the geodesic $\gamma\gamma_2 C_u$ meets or stays away from $G_0/K_0$. 
Both cases indicates that 
$M(\gamma\gamma_2)
N_u(\gamma\gamma_2)\ne 0$ (see the argument for the case $M(\gamma)=0$). 
Let $\gamma^{\prime}=
\gamma\gamma_2$, then 
we are done.
If $\gamma P_1$ or $\gamma P_2$ lies in $\mathcal{X}$ for any $u_1$, $u_2$, then for
any $i\geqslant n$ we have $(w_0+\mathring{w})_i=0$ as $r\rightarrow 0$, where $\mathring{w}$ depends on $u_1$ or $u_2$ (not $u$).
This is equivalent to  $$\frac{\alpha_i-\frac{u_{i+1,1}}{2}}{\frac{1+u_{11}}{2}+\beta}=-w_{0i},\quad \forall~i\geqslant n,$$
for any $u_1$ or $u_2$ (which correspond to $\gamma_2$). Assume that
the above identity holds for infinitely many $u_1$ which can be large enough. Then we get $-w_{0i}=\frac{u_{i+1,1}}{1-u_{11}}$ ($i\geqslant n$), the proportion of the coefficients of $|u|^2$ of $\alpha_i-\frac{u_{i+1,1}}{2}$ and $\frac{1+u_{11}}{2}+\beta$. The proportion of the constant terms of $\alpha_i-\frac{u_{i+1,1}}{2}$ and $\frac{1+u_{11}}{2}+\beta$ should also be equal to $-w_{0i}=\frac{u_{i+1,1}}{1-u_{11}}$. This yields
$\frac{u_{i+1,1}}{1+u_{11}}=
-\frac{u_{i+1,1}}{1-u_{11}}$ from which we get $u_{i+1,1}=0$ ($i\geqslant n$). It follows that
$w_{0i}=0$ ($i\geqslant n$) and $\xi_{\gamma}$ lies in $\mathcal{Y}$ as $(w_0+\tilde{w})_i=0$ for $i\geqslant n$, but we have shown that $\xi_{\gamma}$ does not lie in $\mathcal{Y}$.
\end{itemize}

Now we show the first two properties of Proposition \ref{x2}.
Let $\gamma=a_{r_0}n_{w_0}
k(\gamma)$ and write
$k(\gamma)\gamma_3=
n_wa_sk\in NAK$ where $\gamma_3=a_{\ell_0^p}
k_0^p\in AM_0$ ($p\in\mathbb{Z}$). Then
$\gamma\gamma_3=
a_{r_0s}n_{(w_0+w)/s}k$ for some $k\in K$. Applying (\ref{s1}) and (\ref{vs}) to $u=0$, we get the $i$-th component of $\frac{w_0+w}{s}$:
$$\left(\frac{w_0+w}{s}
\right)_i=
\frac{w_{0\,i}(1-u_{11})+u_{i+1,\,1}}{2}\,\ell_0^p+\frac{w_{0\,i}(1+u_{11})-u_{i+1,\,1}}{2}\,\ell_0^{-p}.
$$
Although   
$M(\gamma)$ is nonzero under our assumption, it is possible that certain $m_i$ ($i\geqslant n$) might be zero.
If $m_i=\frac{w_{0\,i}(1-u_{11})+u_{i+1,\,1}}{2}\ne 0$ and 
$\frac{w_{0\,i}(1+u_{11})-u_{i+1,\,1}}{2}\ne0$, then we can choose proper $p$ (i.e., $\gamma_3$) such that 
$\left|\left(\frac{w_0+w}{s}
\right)_i\right|$ is large enough. Here we require
that $p<0$ and $|p|$ is large. Replacing $\gamma$ with $\gamma\gamma_3$, we might as well assume that
$|w_{0i}|$ is large enough. 
If $\frac{w_{0\,i}(1-u_{11})+u_{i+1,\,1}}{2}=\frac{w_{0\,i}(1+u_{11})-u_{i+1,\,1}}{2}=0$, then $w_{0i}=u_{i+1,1}=0$. 
It follows from (\ref{nexp}) that $n_i=\sum_{j=2}^n
(u_{i+1,j}-w_{0i}u_{1j})u_{j-1}$. Thus, $|n_i|$ achieves its 
minimal value at $u=0\in\mathcal{F}$.
There is nothing to prove
in this case.
Next we discuss the case where
$m_i=0$ and $w_{0i}\ne0$. It follows that
$\frac{w_{0\,i}(1+u_{11})-u_{i+1,\,1}}{2}\ne0$ (otherwise,  $w_{0i}=0$). The above 
formula shows that 
$\left|\left(\frac{w_0+w}{s}
\right)_i\right|$ can be large enough provided that  $p<0$ and $|p|$ is large. 
Replacing $\gamma$ with 
$\gamma\gamma_3$ we assume that $|w_{0i}|$ is large enough. 
The case $m_i=0$ and 
$w_{0i}=0$ is equivalent to the case $\frac{w_{0\,i}(1-u_{11})+u_{i+1,\,1}}{2}=\frac{w_{0\,i}(1+u_{11})-u_{i+1,\,1}}{2}=0$ which has been discussed in above. In all, except the trivial cases (when $w_{0i}=u_{i+1,1}=0$) we can and will assume that
$|w_{0i}|$ is large enough. 
In view of Proposition \ref{u111}, this implies that both $|m_i|$ and $M(\gamma)$ are large enough (again, note that $M(\gamma)\ne0$, i.e., not all $m_i$ are zero).  
By (\ref{nexp}) we can write $n_i$ as $\sum\limits_{j=2}^{n}
n_{i,j-1}^{\prime}$ where
$$n_{i,j-1}^{\prime}=
m_iu_{j-1}^2+(u_{i+1,j}-w_{0i}u_{1j})u_{j-1}+\frac{w_{0i}(1+u_{11})-u_{i+1,1}}{2(n-1)}.$$
Each $n_{i,j-1}^{\prime}$ is a degree 2 polynomial with
leading coefficient being $m_i$. Consequently, 
the $u$ at which 
$N_u(\gamma)$ achieves its minimal value must lie nearby 
$u_0=(u_{0i})$ where 
$u_{0i}=-\frac{u_{i+1,j}-w_{0i}u_{1j}}{2m_i}$. If $|w_{0i}|$ is sufficiently large, then 
$u_{0i}$ is close to 
$\frac{u_{1j}}{1-u_{11}}$
which lies in a bounded interval in $\mathbb{R}$ (independent of $\gamma$ in view of Proposition \ref{u111}). Likewise, based on (\ref{mns}) we can show that the $u$ at which 
$2\sum_{i=n}^{d-1}
m_in_i$ achieves its minimal value also lies in a bounded domain in $\mathbb{R}^{n-1}$. We 
omit the details.  

Finally, we show the third property of Proposition \ref{x2}. Assume that there exist a sequence $\{\gamma_i\in\Gamma\smallsetminus\Gamma_0\,|\,\widetilde{\gamma}_i\ne\widetilde{\gamma}_j\,\,\textup{for}\,\,i\ne j\}$ and $u_i\in\mathbb{R}^{n-1} $ such that $M(\gamma_i)N_{u_i}(\gamma_i)\rightarrow 0$ as $i\rightarrow\infty$. It follows from (\ref{MNQ}) that $\delta_{u_i}(\gamma_i)\rightarrow 1$ as $i\rightarrow\infty$. Thus, $\delta_{u_{i\ast}}(\gamma)\rightarrow1$ where $u_{i\ast}$ denote 
the point $u$ at which $\delta_u(\gamma)$ achieves its minimal value.
Since $u_{i\ast}$ lies in a
fixed compact subset $\mathcal{F}\subset
\mathbb{R}^{n-1}$,
by passing to a subsequence if necessary, we assume that $\{u_{i\ast}\}$ converges to $u\in\mathcal{F}$.
Then $\delta_u(\gamma_i)
\rightarrow1$ as $i\rightarrow\infty$. This contradicts Corollary \ref{uag}.
\end{proof}

\section{$f$ and $k_f$}\label{fkf}
For the trace formula to be
valid, $f$ and $k_f$ should
fulfill some conditions (see the end of Sect.\,\ref{tr}).
In this section we
examine these conditions. Remember that $f(g)=\Phi_{\mu}\left({\rm d}_{G/K}(g{\cdot}
o,\,e{\cdot}o)\right)$ for $g\in G$. Hence $f$ is
bi-$K$-invariant and $f=f_K$ (see Sect.\,\ref{tr} for the
definition of $f_U$). 
Next we show $f\in C_{\textup{unif}}(G)$.
Let $U\subset G$ be a small enough compact
neighborhood of $e$ which is symmetric, i.e., $U^{-1}=U$ (such neighborhood exists:  take $U=V\cap V^{-1}$ where $V$ is a small neighborhood of $e$).
For any $h_1$, $h_2\in U$, we have
$$\big|{\rm d}_{G/K}(h_1gh_2{\cdot}o,\,e{\cdot}o)-
{\rm d}_{G/K}(g{\cdot}o,\,e{\cdot}o)\big|=
\big|{\rm d}_{G/K}(gh_2{\cdot}o,\,h_1^{-1}{\cdot}o)-
{\rm d}_{G/K}(g{\cdot}o,\,e{\cdot}o)\big|\leqslant
2\,{\rm diam}(U)$$
where ${\rm diam}(U)$
means the diameter of $U$ (modulo $K$), i.e., ${\rm diam}(U)
=\min\limits_{u_1,\,u_2\in U}{\rm d}_{G/K}(u_1{\cdot}o,u_2{\cdot}o)$. By the definition of $f$, one has
$$\frac{f_{U}(g)}{f(g)}\leqslant
e^{2\mu\,{\rm diam}(U)}.$$
Hence, $f_{U}$ is integrable if $f$ is so. The latter  
can be shown by the following
computation 
\begin{eqnarray*}
\int_{G}f(g)dg
&=&
\int_N\int_A\Phi_{\mu}
\left({\rm d}_{G/K}(na{\cdot}o,\,e{\cdot}o)\right)dadn\\[0.2cm]
&=&2^d\left(\sqrt{\frac{\pi}{2\mu}}\right)^{d-1}K_{\frac{d-1}{2}}(\mu)<\infty
\end{eqnarray*}
where the second step is a copy
of the computation for $h_f(\lambda_j)$,  dropping the term $\eta_{\nu_j}$ thereof (see Sect.\,\ref{spec}). 
As a result, $f\in C_{\textup{unif}}(G)$. 
For the rest of this section we check the locally uniform convergence of $k_f$. The
$L^{\infty}$-norm of any Laplace eigenfunction $\phi_j$ on $X$ satisfies the classical H\"{o}rmander's
bound (see \cite{hoe}): $$\sup_{x\in X}\,|\phi_j(x)|\leqslant
C\,\lambda_j^{(d-1)/4}\|\phi_j\|_{L^2(X)}$$ where $\lambda_j$ is
the Laplace eigenvalue of $\phi_j$, $C$ is uniform for all $j$.
In our context, $\phi_j$'s are orthonormal basis of $L^2(X)$, so we have:
$\sup_{x\in X}\,|\phi_j(x)|\leqslant
C\,\lambda_j^{(d-1)/4}$. 
For the convergence of $k_f$, it suffices to
consider those $\phi_j$'s with large eigenvalues, i.e., $\nu_j\in\,i\,\mathbb{R}$. Thus, we write
$\nu_j=i\,r_j$ with $r_j\in\mathbb{R}_+$. Then
$\lambda_j=\left(\frac{d-1}{2}\right)^2+r_j^2$. Substituting $v=ir_j$, $x=1$ into the following
formula (see 8.432.5 of \cite{gr})
$$K_{\nu}(xz)=\frac{\Gamma\left(\nu+\frac{1}{2}\right)(2z)^{\nu}}{x^{\nu}\Gamma\left(\frac{1}{2}\right)}\int_{0}^{\infty}\frac{\cos\,xt\,dt}{(t^2+z^2)^{\nu+\frac{1}{2}}},\quad\textup{Re}\left(\nu+\frac{1}{2}\right)\geqslant0,\,\,x>0,\,\,|\textup{arg}\,z|<\frac{\pi}{2},$$
we get
$$K_{ir_j}(z)=\frac{\Gamma\left(
1/2+ir_j\right)}{\Gamma(1/2)}\,(2z)^{ir_j}
\int_{0}^{\infty}
\frac{\cos t\,dt}{\left(t^2+z^2\right)^{1/2+ir_j}},\quad z>0.$$
The integration by parts shows that
$$\int_{0}^{\infty}
\frac{\cos t\,dt}{\left(t^2+
z^2\right)^{1/2+ir_j}}=
(1+2ir_j)\,\int_{0}^{\infty}\frac{t\,\sin t\,dt}{\left(t^2+z^2\right)^{3/2+
ir_j}}.$$
The integral on the right hand side of the above equality 
absolutely converges for $z\in\mathbb{R}\smallsetminus\{0\}$. Thus, $K_{ir_j}(z)$ is uniformly upper bounded by
$\left|\Gamma\left(\frac{1}{2}+ir_j\right)\right| r_j$ for all
$z\in\mathbb{R}\smallsetminus\{0\}$. By the Stirling formula on Gamma function (where $a$,
$b\in\mathbb{R}$):
$$|\Gamma(a+ib)|=\sqrt{2\pi}\,|b|^{a-1/2}e^{-|b|\,\pi/2}\left[1+\mathcal{O}
\left(\frac{1}{|b|}\right)\right],\quad\textup{as}\,\,|b|\rightarrow\infty, $$
we get a bound:
$K_{ir_j}(\mu)=\mathcal{O}\left(r_je^{-\frac{\pi}{2}r_j}\right)$.
Combining this bound with H\"{o}rmander's bound, we have:
$$K_{ir_j}(\mu)\phi_j(z)\overline{\phi_j(w)}=\mathcal{O}\left(r_je^{-\frac{\pi}{2}r_j}\,\left[r_j^{(d-1)/2}\right]^2\right)=\mathcal{O}\left(r_j^{d}e^{-\frac{\pi}{2}r_j}\right)\quad\textup{as}\quad j\rightarrow\infty.$$
The spectrum $\{\lambda_j\}$ of the Laplacian is discrete with
$\infty$ as the unique accumulation point and each eigenvalue
$\lambda_j$ occurs with finite multiplicity, so is
$\{r_j\in\mathbb{R}_+\}$. Let $N(x)$ be the counting function of
Laplace eigenvalues with multiplicities over $X$:
$$N(x):=\sum\limits_{\phi_j:\,r_j\leqslant x}1.$$
Weyl's law\index{Weyl's law} gives the asymptotic\index{$N(x)$}
of $N(x)$ for large $x$ (see \cite{mp}).
\begin{equation*}
N(x)=\frac{\textup{vol}(X)}{(4\pi)^{d/2}\Gamma\left(\frac{d}{2}+1\right)}\,x^d+o\left(x^d\right),\quad\textup{as}\,\,x\rightarrow\infty.
\end{equation*}
Let $A_j=\sqrt{\lambda_j}-r_j$, then $A_j=\mathcal{O}\left(r_j^{-1}\right)$ as $j\rightarrow\infty$. With the bound on
$K_{ir_j}(\mu)\phi_j(z)\overline{\phi_j(w)}$ obtained in above and
the formula
$h_f(\lambda_j)=2^d\left(\sqrt{\frac{\pi}{2\mu}}\right)^{d-1}
K_{\nu_j}(\mu)$, we have:
$$k_f\ll\sum\limits_{j}
r_j^de^{-\frac{\pi}{2}r_j}<\sum\limits_{j}
\lambda_j^{\frac{d}{2}}e^{-\frac{\pi}{2}\left(\sqrt{\lambda_j}-A_j\right)}\,\asymp\sum\limits_{j}
\lambda_j^{\frac{d}{2}}e^{-\frac{\pi}{2}\sqrt{\lambda_j}}=
\int_{0}^{\infty}
x^{\frac{d}{2}}e^{-\frac{\pi}{2}\sqrt{x}}dN(x).$$ Here $dN(x)$\index{$dN(x)$} means the measure on $\mathbb{R}_+$ with mass $1$ at $x=r_j$ (multiplicities counted), otherwise $0$.
Integration by parts shows that
$$\int_{0}^{\infty} x^{\frac{d}{2}}e^{-\frac{\pi}{2}\sqrt{x}}dN(x)=
x^{\frac{d}{2}}e^{-\frac{\pi}{2}\sqrt{x}}N(x)\Big|_{0}^{\infty}-
\int_{0}^{\infty}e^{-\frac{\pi}{2}\sqrt{x}}\left(\frac{d}{2}x^{\frac{d}{2}-1}-\frac{\pi}{4}x^{\frac{d-1}{2}}\right)N(x)dx.$$
Applying Weyl's law on $N(x)$ to the right hand side of this identity, we see that 
the integral on the left hand side converges. Thus, we have shown the absolute and locally uniform
convergence of $k_f$.
\begin{ac*}
The author would like to thank Professor A. Deitmar for introducing him this topic
as well as many helpful discussions.\end{ac*}

\noindent {\it Author's address}: {Depart. of Mathematics, Bar-Ilan Univ., Ramat-Gan 52900, Israel}\\
{\it Author's email}: {\tt fsu@\,math.biu.ac.il}
\end{document}